\documentclass[final]{siamltex}
\newcommand{\blind}{1}
\usepackage{epsfig}
\usepackage{amsmath,amssymb}
\usepackage{xcolor}
\usepackage{graphicx}
\usepackage{epstopdf}
\usepackage{algorithmic,algorithm}
\usepackage{enumitem} 

\usepackage{mathdots,array,booktabs,mathrsfs}
\usepackage{multirow}
\usepackage{mathrsfs}
\usepackage{appendix}
\usepackage{amsfonts}
\usepackage{amsmath}
\usepackage{latexsym}
\usepackage{amssymb}
\usepackage{color}
\usepackage{amssymb,amsmath,array,epsf}
\usepackage{mathdots}
\usepackage{epsfig}
\usepackage{picinpar,graphicx}
\usepackage[caption=false]{subfig}  
\usepackage{url}

\newtheorem{example}{Example}
\newtheorem{rem}{Remark}[section]

\title{Precision-induced Adaptive Randomized Low-Rank Approximation for SVD and Matrix Inversion} 

\author{
Weiwei Xu \footnotemark[1]
,\ Weijie Shen\footnotemark[2]
,\and Zhengjian Bai\footnotemark[3], \and Chen Xu\footnotemark[4]}
\begin{document}
\maketitle

\renewcommand{\thefootnote}{\fnsymbol{footnote}}

\footnotetext[1]{School of Mathematics and Statistics, Nanjing University of Information Science and Technology, Nanjing 210044, P. R. China and School of Mathematics and Statistics, Xi'an Jiaotong University, Xi'an 710049, P. R. China (wwxu@nuist.edu.cn). The research of this author was partially supported by the National Key R\&D Program of China (2025YFA1016400).}
\footnotetext[2]{School of Mathematics and Statistics, Nanjing University of Information Science and Technology, Nanjing 210044, P. R. China (swj@nuist.edu.cn). The research of this author was partially supported by the Postgraduate Research \& Practice Innovation Program of Jiangsu Province under grants KYCX24\_1402.}
\footnotetext[3]{School of Mathematical Sciences,  Xiamen University, Xiamen 361005, P. R. China (zjbai@xmu.edu.cn). The research of this author was partially supported by the National Natural Science Foundation of China grant 12371382 and Fujian Provincial Natural Science Foundation of China grant 2025J01030.}
\footnotetext[4]{Department of Mathematics and Fundamental Research Pengcheng Laboratory, Shenzhen 518108, P. R. China and School of Mathematics and Statistics, Xi'an Jiaotong University, Xi'an 710049, P. R. China (cx3@xjtu.edu.cn). The research of this author was partially supported by National Key R\&D Program of China (2023YFA1008703) and Major Key Project of PCL (PCL2024A06).}

\renewcommand{\thefootnote}{\arabic{footnote}}

\slugger{mms}{xxxx}{xx}{x}{x--x}


\begin{abstract}
Singular value decomposition (SVD) and matrix inversion are ubiquitous in scientific computing. Both tasks are computationally demanding for large scale matrices. Existing algorithms can approximatively solve these problems with a given rank, which however is unknown in practice and requires considerable cost for tuning. In this paper, we tackle the SVD and matrix inversion problems from a new angle, where the optimal rank for the approximate solution is explicitly guided by the distribution of the singular values. Under the framework, we propose a precision-induced random re-normalization procedure for the considered problems without the need of guessing a good rank. The new algorithms built upon the procedure simultaneously calculate the optimal rank for the task at a desired precision level and lead to the corresponding approximate solution with a substantially reduced computational cost. The promising performance of the new algorithms is supported by both theory and numerical examples.
\end{abstract}

\begin{keywords}
Random re-normalization, nearly low-rank, Gaussian random matrix
\end{keywords}

\begin{AMS}
65F15, 65F99
\end{AMS}

\pagestyle{myheadings}
\thispagestyle{plain}
\markboth{Weiwei Xu, \ Weijie Shen,\ Zhengjian Bai, and Chen Xu}{An approach for accelerating SVD and matrix inversion}

\section{Introduction}
\setcounter{equation}{0}

Singular value decomposition (SVD) and matrix inversion are fundamental tools in scientific computing, with widespread applications \cite{o3} in areas such as the Kronecker canonical form of a general matrix \cite{o6}, the linearly constrained least-squares problem \cite{o11,o15}, the general Gauss-Markov linear model \cite{o13}, real-time signal processing \cite{o9,o14}, precoding channel information matrix in wireless communication \cite{oo1,oo8} and so on. 
To compute the  SVD of an $m\times n$ matrix $A$, the traditional algorithms such as Golub–Reinsch SVD \cite{gr1970} typically require a cost of \(O(\min\{mn^2,m^2n\})\) flops.
The iterative algorithms such as Lanczos \cite{jia2003,jia2010} and Jacobi-Davidson \cite{h2001} largely improve computational efficiency by using matrix-vector products; however, they often require careful tuning and may converge slowly to internal singular values.
To compute the matrix inversion of an $n\times n$ matrix $A$, it is known that the classical Gaussian elimination- or adjugate-matrix-based methods typically require a cost of $O(n^3)$ flops \cite{golub12,g1955}.
The block-inversion techniques elevate the classical methods by exploiting sparsity or structure via Schur complements, but they may also introduce a significant overhead cost when the block dimensions are small.
LU decompositions require forward or backward substitution (approximately $2n^3/3$ flops) and may be unstable for ill‑conditioned matrices, while QR‑based inversion improves stability at the cost of additional orthogonalization operations.


In real-world applications, many SVD and matrix inversion tasks need to be carried out on large-scale matrices with potentially ill-conditions \cite{oo1,oo8}. In such situations, the performance of the aforementioned methods may be less satisfactory due to the trade-off between computational cost and numerical precision; the most accurate method usually comes with the highest cost.
In practice, however, the true rank of the matrix is usually unknown a priori, and estimating it accurately often incurs additional computational overhead. In contrast, the desired precision or allowable error is typically known in advance from application requirements. Therefore, a more practical strategy is to drive the computation by the target precision and adaptively determine the necessary rank. For example, in image compression and denoising, the truncation level is chosen according to a specified PSNR or reconstruction error \cite{hcwy2024}; in wireless MIMO channel estimation and hybrid precoding, only singular modes that capture a given percentage of channel energy are preserved to satisfy performance constraints \cite{lch2022}; and in inverse or regularized problems, the truncation parameter is selected based on the target reconstruction accuracy or regularization tolerance \cite{abf2025}.
This calls for a more practical strategy: finding a "good-enough" method that meets a desired precision.

With the above thinking, we aim to design precision-induced methods for efficiently computing SVD and matrix inversion. It is observed that many real-world matrices come with a nearly low rank structure, where most spectral energies are concentrated in a small number of dominant singular values \cite{a2025,h1975,h1970,k2013,guo2024,xu2024}. This common characteristic readily indicates a natural definition of precision as below.
\begin{definition}[\(\epsilon\)-rank]\label{Ass1}
Let $A \in \mathbb{C}^{m \times n}$  with the singular values $\sigma_1(A)\ge \sigma_2(A)\ge\cdots\ge\sigma_{p}(A)\ge 0$, where $p=\min\{m,n\}$.
For any $0\le \epsilon <1\), we define the \(\epsilon\)-rank of \(A\) as
\[
r_{\epsilon}=\big\{\mbox{$\min(s) : 1 \leq s \leq p,\; {(\sum_{i=1}^{s}\sigma_i^2(A))}/({\sum_{i=1}^{p}\sigma_i^2(A)})\geq 1-\epsilon$}\big\},
\]
where \(r_\epsilon\) is a desired precision level to be specified by users and it is an approximated matrix rank ($\epsilon$-rank) induced by a given tolerance $\epsilon$. A moderate $\epsilon$ often leads to $r_{\epsilon}\ll p$, which enables efficient SVD on $A$ at the given precision level.
\end{definition}


The concept of $\epsilon$-rank provides a quantitative criterion for precision-controlled low-rank approximation. Building on this definition, randomized algorithms are employed to compute low-rank approximations that adapt to the effective numerical rank of matrices. In practice, these algorithms can be divided into two cases: when the target rank is known, a numerical rank is specified to obtain an $\epsilon$-rank approximation; when the rank is unknown, the approximation is adaptively constructed to satisfy a given tolerance $\epsilon$.
The basic idea of employing the $\epsilon$-rank for efficient SVD  can be summarized as follows.
First, a random sampling procedure is applied to reduce the dimensionality of the input matrix. In this step, an improved sampling strategy is adopted, where the Gram–Schmidt orthogonalization process is modified to adaptively determine the effective subspace dimension. Specifically, the diagonal elements of the upper-triangular matrix generated during orthogonalization are compared with the prescribed tolerance $\epsilon$; once the accumulated spectral energy exceeds $1-\epsilon$, the corresponding subspace is identified as the $\epsilon$-rank of the matrix, leading to a low-rank approximation that satisfies the given precision requirement.
Second, a complete singular value decomposition is performed on the reduced matrix.
Finally, the decomposition is projected back to the original space to obtain an approximate SVD of the original matrix.

Randomized algorithms offer a path to \(O(m n r_\epsilon)\) complexity without sacrificing accuracy \cite{92,11,gr1970,9,h2001,jia2003,jia2010,12,13,14,15,16,10,yu2018}.
Halko et al. \cite{9} reduced the cost by proposing randomized SVD (RSVD). Given a target subspace dimension $\ell$, RSVD draws a Gaussian random matrix \(\Omega\in\mathbb R^{n\times\ell}\), forms the sample matrix \(Y = A\Omega\), and computes an $m\times \ell$ matrix \(Q = \mathrm{orth}(Y)\) via QR decomposition, whose columns form an orthonormal basis for the range of $Y$. The approximate decomposition \(A\approx QQ^\mathrm{H}A\) is then used for downstream tasks.  While this fixed-$\ell$ strategy is simple and highly parallelizable, it suffers from two key drawbacks: (i) one must choose $\ell$ in advance, typically by oversampling the unknown rank by a heuristic amount (e.g.\ \(\ell = r + 10\)), which can lead to excessive computation if \(r_\epsilon\) is overestimated; (ii) if $\ell$ is underestimated, the approximation error may exceed the desired tolerance, necessitating a full rerun with a larger $\ell$–an expensive ``trial-and-error'' process.
To remove the need for choosing \(\ell\) in advance, the adaptive randomized range finder (ARRF) algorithm \cite{9} extends RSVD by adaptively sampling columns until a target error is met. This removes the requirement to predefine \(\ell\). At each iteration it augments \(Y\), orthogonalizes against the existing \(Q\) (via Schmidt orthogonalization), and checks whether the projected norm \(\|A - QQ^\mathrm{H}A\|\) is below the tolerance. On the downside, its column-by-column QR orthogonalization entails repeated matrix–vector products and QR decompositions. The orthogonal columns of $Q$ obtained are usually larger than $r_\epsilon$. As a result, the overall runtime may become longer.
More recently, Yu et al. \cite{yu2018} proposed the randQB‑EI algorithm, which samples and orthogonalizes in blocks and uses the Frobenius norm for efficient error estimation. This approach balances fixed-precision goals with reduced communication cost. However, updating the $B$-factor during each iteration still incurs \(O(mnr)\) flops.

Given the above, we propose a precision-induced random re-normalization procedure for efficient computation of SVD and matrix inverse. 
The proposed method is based on a Gaussian random matrix. After each generation and projection of a Gaussian random vector, the magnitude of the diagonal entry \( b_{kk} \) of the $B$-factor in the QB decomposition is directly used to determine whether the precision \( \epsilon \) has been achieved. This enables the $\epsilon$-rank and approximate basis of the matrix to be adaptively identified, and the SVD and matrix inverse can be constructed to arbitrary precision. 
This paper theoretically gives necessary and sufficient conditions to prove that the expectation of $b_{k+1,k+1}^2$ satisfies $\mathbb{E} b_{k+1,k+1}^2\leq\| A\|_{\mathrm{F}}^{2}\epsilon$, if and only if $\| A-\hat{A}\|_{\mathrm{F}}/\| A\|_{\mathrm{F}} \leq \sqrt{\epsilon}$, where $\mathbb{E}\xi$ denotes the expectation of a random variable $\xi$. In this case, we obtain $(\sum_{i=1}^{k}\sigma_{i}^{2}(A))/(\sum_{i=1}^{p}\sigma_{i}^{2}(A))$ $\geq 1-\epsilon$ and thus the $\epsilon$-rank of $A$ is $r_{\epsilon}=k$.
Compared to the ARRF algorithm in \cite{9} and the randQB-EI algorithm in \cite{yu2018}, the proposed approach avoids the computation of the full matrix \( B \), requires no residual estimation or manual parameter tuning, and relies solely on the lower bound of the retained energy ratio $1-\epsilon$ or the upper bound of the relative error accuracy \( \epsilon \). Theoretically, the relative reconstruction error,  singular value error, and matrix inverse error are all guaranteed to be \( O(\sqrt{\epsilon}) \).
We prove that the new methods can achieve a similar error bound with the traditional method with only $O(mnr_{\epsilon})$ complexity, lower than the traditional $\min\{O(m^2n),O(mn^2)\}$ complexity when $r_{\epsilon}$ is unknown. In typical near-low-rank scenarios, our proposed SVD algorithm achieves over a tenfold speedup compared to MATLAB’s built-in economy-sized SVD (eSVD). Similarly, our matrix inverse algorithm outperforms MATLAB’s built-in matrix inverse (BII) function in terms of computational efficiency. All those merits make the new method effective and efficient in the varying-precision approximation problem. The contributions of our paper are listed as follow:


\begin{enumerate}
\item We develop a precision-induced random re-normalization procedure that adaptively determines the $\epsilon$-rank $r_{\epsilon}$ and the low-rank approximation in a matrix-driven manner. Furthermore, we establish the necessary and sufficient condition for determining $r_{\epsilon}$, which is consistent with the prescribed approximation error bound and provides a solid theoretical foundation for the proposed framework.
\item Based on this procedure, we propose new SVD and matrix inverse algorithms for nearly low-rank matrices, where the iteration termination criterion differs from that of existing methods such as randQB-EI. The proposed stopping criterion leads to lower theoretical complexity and reduced computational cost while maintaining comparable accuracy.
\end{enumerate}

The rest of the paper is organized as follows. In Section 2, we review the Gaussian random re-normalization procedure. The corresponding theoretical analysis of showing its approximation performance is also given. We provide a precision-induced random re-normalization for accelerating the computation of the SVD of a nearly low-rank matrix in Section 3. We also theoretically justify the proposed methods by showing the error bound and computation complexity. 
We develop a precision‑induced random re‑normalization for fast matrix inversion and validate its efficacy through detailed error analysis in Section 4.
In Section 5, some numerical examples are proposed to illustrate the efficiency of the new methods. Finally, we conclude the paper in Section 6 with some useful remarks.

\subsection{Notations}
Let $\mathbb{R}^{m\times n}$, $\mathbb{C}^{m\times n}$, $\mathbb{C}^{n}$ and $\mathbb{U}_n$ denote the sets of $m\times n$ real matrices, $m\times n$ complex matrices, $n$-dimensional complex vectors and $n\times n$ unitary matrices, accordingly.
For a real matrix $A\in\mathbb{R}^{m\times n}$, by $A^{\mathrm{T}}$ we denote the transpose.
For a complex matrix $A\in\mathbb{C}^{m\times n}$, by $A^{\mathrm{H}}$, $A^{-1}$, $A^{\dag}$, $\mathrm{rank}(A)$ and $\mathrm{tr}(A)$ we denote the conjugate transpose, inverse, Moore-Penrose pseudo-inverse, rank and trace of matrix $A$, respectively.
$\sigma_{\max}(A)$ denotes the largest singular value of matrix $A$.
For matrices $A, B\in\mathbb{C}^{n\times n}$,  $A\succeq B$ means that $A-B$ is Hermitian and positive semi-definite. 
The symbols $I_{n}$ and $O_{m\times n}$ represent the identity matrix of order $n$ and $m\times n$ zero matrix, respectively.
We use $\|\cdot\|,\|\cdot\|_{2}$ and $\|\cdot\|_{\mathrm{F}}$ to denote the unitary invariant norm, spectral norm and Frobenius norm of a matrix, respectively.
For a given matrix $T$, we write $P_{T}$ for the unique orthogonal projector with $\mbox{range}(P_{T})=\mbox{range}(T)$. When $T$ has full column rank, we can express this projector explicitly $P_{T} = T(T^{\mathrm{H}}T)^{-1}T^{\mathrm{H}}$. 

\section{Random re-normalization}
In this section, we state and formalize the random re-normalization. To ease the presentation, we first provide the definition of the random re-normalization matrix as follows.

\begin{definition}[Random re-normalization matrix]\label{x1}
A matrix $\Omega_k\in\mathbb{C}^{n\times k}$ is defined as random re-normalization matrix of a matrix $A\in\mathbb{C}^{m\times n}$ if $\Omega_k$ satisfies random re-normalization condition:
\begin{equation}\label{Eq.cond}
  \mathrm{rank}(A\Omega_k)=\mathrm{rank}(A).
\end{equation}
\end{definition}

Definition \ref{x1} shows that, the rank of recombination matrix $A\Omega_k$ is that of $A$. When $k < \mbox{rank}(A)$, $\mbox{rank}(A\Omega_k)$ is always less than $\mbox{rank}(A)$ ; when $k \geq \mbox{rank}(A)$, $\mbox{rank}(A\Omega_k)=\mbox{rank}(A)$. Such changes of rank of $A\Omega_k$ makes it possible to automatically detect $\mbox{rank}(A)$ without comparing linear-relation of all columns. Numerically, columns of $A\Omega_k$ can be generated and checked their linear-relation in a sequential manner until $k \geq \mbox{rank}(A)$, which provides a doable way to simplify calculations. Theoretically, when $k \geq \mbox{rank}(A)$, the range of $A$ and $A\Omega$ share the same basis; we may find the basis of $A$ from $k$ recombination columns of $A\Omega_k$ instead of $n$ columns of $A$.

\begin{rem}
  The definition of random re-normalization can also be extended to nearly low-rank case. Specifically, for a nearly low-rank matrix $A\in\mathbb{C}^{m\times n}$ and precision $\epsilon > 0$, a matrix $\Omega_k \in\mathbb{C}^{n\times k}$ is a random re-normalization matrix of $A$ for $\epsilon$ if $\mathrm{rank}(A\Omega_k) \geq r_{\epsilon}$, where $r_{\epsilon}$ is the $\epsilon$-rank.
\end{rem}

The procedure of random re-normalization is iterative in nature and its basic steps are as follows: we initialize the procedure with a random column vector $\omega_1$ and compute $A\omega_1$; then we generate another random column vector $\omega_2$ and check linear-relation of $\{A\omega_1, A\omega_2\}$. This process is repeated until $\{A\omega_1, A\omega_2, \ldots, A\omega_k\}$ are linear dependent. Finally, the orthogonal basis of $A$ is obtained by orthogonaling recombination columns $\{A\omega_1, A\omega_2, \ldots, A\omega_k\}$ based on Gram-Schmidt orthogonalization. We summarize this procedure in Algorithm \ref{Alg.RR} below.

\begin{algorithm}
 \renewcommand{\algorithmicrequire}{\textbf{Input:}}
 \renewcommand{\algorithmicensure}{\textbf{Output:}}
 \caption{The random re-normalization procedure}
 \label{Alg.RR}
 \begin{algorithmic}[1]
  \REQUIRE A matrix $A\in\mathbb{C}^{m\times n}$.
  \ENSURE An approximate basis $Q=[q_{1},\ldots,q_{k}]$ of the range of $A$ and $\rank(A)$.
  \FOR{$k=1,2,3,\ldots$}
        \STATE Generate a random  vector $\omega_{k}=[\omega_{1k},\omega_{2k},\ldots,\omega_{nk}]^{\mathrm{T}}\in\mathbb{R}^{n}$ and compute $v_{k}=A\omega_{k}$.
        \IF{$\{v_1,\ldots, v_k\}$ are linear independent}
        \STATE Compute the orthogonal vectors $\{q_1,\ldots, q_k\}$ of $\{v_1,\ldots, v_k\}$.
        \ELSE
        \STATE \textbf{break}
        \ENDIF
        \ENDFOR
  \STATE \textbf{return} $Q=[q_{1},\ldots,q_{k}]$ and $\mbox{rank}(A)=k$.
 \end{algorithmic}
\end{algorithm}

\subsection{Gaussian random re-normalization}\label{Subsection.Gau}
Commonly, any Gaussian matrix $\Omega\in\mathbb{C}^{n\times k}$ can be served as  a random re-normalization matrix of a given matrix $A$ when $k \geq \mbox{rank}(A)$. 

\begin{lemma}\label{Thm.Gaussian}
Given a matrix $A\in\mathbb{C}^{m\times n}$, any Gaussian random matrix $\Omega\in\mathbb{C}^{n\times k}$ for all $\mathrm{rank}(A) \leq k\leq n$ is random re-normalization matrix of matrix $A$.
\end{lemma}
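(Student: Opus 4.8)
The plan is to show that $\mathrm{rank}(A\Omega)=\mathrm{rank}(A)$ holds with probability one, which is the intended reading of the statement for a random $\Omega$. Write $r=\mathrm{rank}(A)$. One inclusion is free: since $A\Omega$ is obtained from $A$ by right multiplication, $\mathrm{range}(A\Omega)\subseteq\mathrm{range}(A)$, so $\mathrm{rank}(A\Omega)\le r$ for \emph{every} realization of $\Omega$. The whole content is therefore the reverse inequality $\mathrm{rank}(A\Omega)\ge r$, which I would establish almost surely, so that together with the trivial bound it forces equality and hence condition \eqref{Eq.cond}.

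First I would reduce to a genericity statement about polynomials. The event $\{\mathrm{rank}(A\Omega)<r\}$ is exactly the event that every $r\times r$ minor of $A\Omega$ vanishes. Each such minor is a polynomial in the (real and imaginary parts of the) entries of $\Omega$, with coefficients assembled from the entries of $A$. It then suffices to exhibit a single $r\times r$ minor which is not the identically-zero polynomial: the zero set of a non-trivial polynomial on a Euclidean space has Lebesgue measure zero, and the Gaussian law of $\Omega$ is absolutely continuous with respect to Lebesgue measure, so that minor is almost surely nonzero, whence $\mathrm{rank}(A\Omega)\ge r$ almost surely.

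To produce such a minor I would construct an explicit deterministic witness $\Omega^{\star}\in\mathbb{C}^{n\times k}$ with $\mathrm{rank}(A\Omega^{\star})=r$. Choose $w_1,\dots,w_r\in\mathbb{C}^{n}$ whose images $Aw_1,\dots,Aw_r$ form a basis of $\mathrm{range}(A)$ (possible precisely because $\dim\mathrm{range}(A)=r$), and, using the hypothesis $k\ge r$, set $\Omega^{\star}=[\,w_1,\dots,w_r,0,\dots,0\,]$. Then $A\Omega^{\star}=[\,Aw_1,\dots,Aw_r,0,\dots,0\,]$ has $r$ linearly independent columns, so some $r\times r$ minor of it is nonzero; the corresponding minor, regarded as a polynomial in the entries of a generic $\Omega$, is thus not identically zero, which is exactly what the previous paragraph requires. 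Combining the two inequalities gives $\mathrm{rank}(A\Omega)=\mathrm{rank}(A)$ with probability one, i.e. $\Omega$ is a random re-normalization matrix of $A$.

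The main obstacle here is bookkeeping rather than depth: one must be careful to treat the minor as a polynomial in the real coordinates underlying the (real or complex) Gaussian entries, so that the ``measure-zero zero set'' argument applies verbatim, and one must phrase the conclusion as an almost-sure statement, since $\mathrm{rank}(A\Omega)=\mathrm{rank}(A)$ cannot hold for every realization (e.g. $\Omega=O_{n\times k}$). An alternative route avoiding minors is to write $A=U\Sigma V^{\mathrm{H}}$, observe $\mathrm{rank}(A\Omega)=\mathrm{rank}(\Sigma\,V^{\mathrm{H}}\Omega)$, use unitary invariance of the Gaussian distribution to replace $V^{\mathrm{H}}\Omega$ by a fresh Gaussian $\widetilde{\Omega}$, and note that $\Sigma\widetilde{\Omega}$ has the same rank as the top $r\times k$ block of $\widetilde{\Omega}$, a Gaussian $r\times k$ matrix with $k\ge r$ and hence of full row rank $r$ almost surely; the same genericity lemma closes this variant.
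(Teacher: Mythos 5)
Your argument is correct, and your main route is genuinely different from the paper's. The paper proves the lemma by writing the SVD $A=U_1\Sigma_1V_1^{\mathrm{H}}$, invoking unitary invariance to assert that $V_1^{\mathrm{H}}\Omega$ is an $r\times k$ Gaussian matrix of full row rank $r$ with probability one, and then applying its Appendix Lemma \ref{p2} (rank is preserved under left multiplication by the full-column-rank factor $U_1\Sigma_1$) to conclude $\mathrm{rank}(A\Omega)=\mathrm{rank}(V_1^{\mathrm{H}}\Omega)=r$; this is essentially the ``alternative route'' you sketch in your last paragraph. Your primary argument instead bypasses the SVD entirely: it reduces the claim to the non-vanishing of a single $r\times r$ minor of $A\Omega$, exhibits an explicit witness $\Omega^{\star}$ built from preimages of a basis of $\mathrm{range}(A)$ to show that minor is not the zero polynomial, and concludes by absolute continuity of the Gaussian law that the minor is almost surely nonzero. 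What your approach buys is self-containedness: the fact that a tall-or-square Gaussian matrix has full rank almost surely --- which the paper simply asserts parenthetically --- is exactly the genericity statement you prove, so nothing is left implicit; it also needs no appendix lemma. What the paper's route buys is brevity and reuse of machinery already present (Lemma \ref{p2}, unitary invariance). One point in your favor worth noting: you correctly insist that the conclusion can only be an almost-sure statement (the zero matrix is a Gaussian realization of measure zero on which the claim fails), whereas the lemma as stated in the paper omits this qualifier and only the proof mentions ``with probability one.'' Your caution about treating the minor as a polynomial in the real coordinates underlying the complex entries is the right bookkeeping and closes the argument cleanly.
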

\if1\blind{
\begin{proof}
Let the SVD of $A\in\mathbb{C}^{m\times n}$ be $A=U\Sigma V^{\mathrm{H}}$, where $\mbox{rank}(A)=r$, $U\in \mathbb{U}_{m}$, $\Sigma\in\mathbb{R}^{m \times n}$ and $V\in \mathbb{U}_{n}$. Then, we have
\[
A\Omega=U\Sigma V^{\mathrm{H}}\Omega\equiv\left[
 \begin{array}{cc}
 U_1 & U_2 \\
 \end{array}
 \right]
 \left[
 \begin{array}{cc}
 \Sigma_1 & O_{r\times(n-r)} \\
  O_{(m-r)\times r}& O_{(m-r)\times(n-r)}\\
 \end{array}
 \right]
  \left[
  \begin{array}{c}
  V_1^{\mathrm{H}} \\
  V_2^{\mathrm{H}} \\
  \end{array}
 \right]
\Omega=U_1\Sigma_1V_1^{\mathrm{H}}\Omega,
\]
where $U_1 \in \mathbb{C}^{m\times r}, \Sigma_1=\diag(\sigma_1(A),\ldots,\sigma_r(A))\in \mathbb{R}^{r \times r}, V_1\in \mathbb{C}^{n \times r}$. By the unitary invariance of Gaussian matrices, we know that $V_1^{\mathrm{H}}\Omega$ is a $r\times k$ Gaussian random matrix and $\mbox{rank}(V_1^{\mathrm{H}}\Omega)=r$ (with probability one).
Since $U_1\Sigma_1\in \mathbb{C}^{m\times r}$ has full column rank, Lemma \ref{p2}  implies that
\begin{align*}\label{Eq.rank}
  \mbox{rank}(A\Omega)=\mbox{rank}(U_1\Sigma_1V_1^{\mathrm{H}}\Omega)&=\mbox{rank}(V_1^{\mathrm{H}}\Omega)=r=\mbox{rank}(A).
\end{align*}
\end{proof}
} \fi
\if0\blind{
\bigskip
}\fi

Lemma \ref{Thm.Gaussian} indicates that practitioners may use Gaussian distribution to directly construct a Gaussian random re-normalization matrix in practice. Once $\Omega_k$ is provided, instead of $A$, we can determine the approximate basis matrix $Q$ of $A$ directly from $A\Omega_k$. However, it is clear that Steps 3--5 in Algorithm \ref{Alg.RR} are computationally demanding since linearly independence is usually checked by Gram-Schmidt orthogonalization with $O(mk^2)$ complexity. The total computational cost of Algorithm \ref{Alg.RR} is more consuming especially as Gram-Schmidt orthogonalization is implemented in each iteration.

To improve the efficiency, we achieve Steps 3--5 of Algorithm \ref{Alg.RR} through QB decomposition \cite{10}. The QB decomposition is a basic tool of complexity reduction of matrix computation, which aims to decompose a matrix $A \in \mathbb{C}^{m \times n}$ into the form $A =QB$, where $Q \in \mathbb{C}^{m \times r}$ has orthonornal columns and $B\in \mathbb{C}^{r \times n}$ is an upper triangular matrix with $r=\mbox{rank}(A)$. The QB decomposition is an $O(mnr)$ complexity procedure, which can also be conveniently achieved in a linear manner. Namely, the columns of $A$ are added into the decomposition sequentially to obtain the colunmns of the upper triangular matrix $B$. In the random re-normalization process, this can be reflected by adding the columns of $A\Omega_k$ sequentially to obtain $A\Omega_k=Q_kB_k$. Using the QB decomposition of $A\Omega_k$, not $A$, may bring an exclusive advantage: only first $\mbox{rank}(A)$ columns of $A\Omega_k$ are independent and this can be reflected by diagonal entries of upper triangular expression $B_k$. Namely, the diagonal entries $b_{kk}$ of $B_k$ are all are nonzero whenever $k \leq \mbox{rank}(A)$, which thus can be used to terminate the random re-normalization process. The method is as follows. We recursively define $\Omega_{k+1} = [\Omega_k, \omega_{k+1}]$  and implement QB decomposition on $A\Omega_{k+1}$, resulting in QB decompositions $A\Omega_{k+1} = Q_{k+1}B_{k+1}$. Terminate the process when an index $k$ appears such that $b_{kk}$ is non-zero but $b_{k+1,k+1}$ does. Obviously, $\mbox{rank}(A)$ can be identified in this way and, more exclusively, this is realized in the QB decomposition process of $A\Omega_{k+1}$ with no additional computation involved. We summarize the Gaussian random re-normalization in the following Algorithm \ref{Alg.Gau}.
\begin{algorithm}
 \renewcommand{\algorithmicrequire}{\textbf{Input:}}
 \renewcommand{\algorithmicensure}{\textbf{Output:}}
 \caption{Gaussian random re-normalization}
 \label{Alg.Gau}
 \begin{algorithmic}[1]
  \REQUIRE A  matrix $A\in\mathbb{C}^{m\times n}$ and $q_{0}=[]$.
  \ENSURE A Gaussian random matrix $\Omega_{k}$ and an approximate  basis $Q_k=[q_{1},\ldots,q_{k}]$ of the range of $A$.

  \FOR{$k=1,2,\ldots,n$}
        \STATE Generate a Gaussian test vector $\omega_{k}=[\omega_{1k},\omega_{2k},\ldots,\omega_{nk}]^{\mathrm{T}}\in\mathbb{R}^n$ with $\omega_{jk} \sim \mathcal{N}(0,1)$ for $j=1, \ldots, n$.
        \STATE Compute $v_{k}=(I-\sum_{l=0}^{k-1}q_{l}q_{l}^{\mathrm{H}})A\omega_{k}$ and $b_{kk}=\|v_{k}\|_{2}$.
        \IF{$b_{kk}^{2}>0$}
        \STATE Compute $q_{k}=v_{k}/b_{kk}$.
        \ELSE
        \STATE \textbf{break}
        \ENDIF
        \ENDFOR
  \STATE \textbf{return} $\Omega_{k}=[\omega_{1},\omega_{2},\ldots,\omega_{k}]$ and $Q_k=[q_1,q_2,\ldots, q_{k}]$.
 \end{algorithmic}
\end{algorithm}

\subsection{Adaptive determination of $\epsilon$-rank}\label{Sec3.1}
When matrix $A$ is of nearly low-rank, by Definition \ref{Ass1}, only $r_{\epsilon}$ out of all singular values are significant for a precision $\epsilon$; the low-rank approximation of $A$ can be thus obtained with all insignificant singular values being disregarded.

However, $r_{\epsilon}$ is usually unknown in advance for many applications. Random re-normalization provides a viable strategy to determine $r_{\epsilon}$, due to its genuine existence of incremental update of columns of $\Omega$. That is, $r_{\epsilon}$ and $Q_k$ can be obtained simultaneously without any additional computation involved. Specifically, the columns of $A\Omega_k$ always are linearly independent when $k<r_{\epsilon}$. This can be reflected on $b_{kk}$ of the upper triangular matrix $B_k$, i.e., its diagonal entries are always relative large when index $k \leq r_{\epsilon}$, as discussed in Section \ref{Subsection.Gau}. Thus, we can employ QB decomposition on $A\Omega_k$ to determine $r_{\epsilon}$ instead of directly calculating all singular values of $A$. The basic steps are similarly to Algorithm \ref{Alg.Gau}. However, the columns of $A\Omega_k$ are weak-dependent even when $k>r_{\epsilon}$ in nearly low-rank case. It is thus important to first check when the process should be terminated as $b_{kk}$ decreasing. In Lemma \ref{Thm.Expect} below, we provide a necessary and sufficient condition of $b_{kk}$ and show the effectiveness of the low-rank approximation. 

\begin{lemma}\label{Thm.Expect}
Let Algorithm {\rm \ref{Alg.Gau}} be implemented on a matrix $A\in\mathbb{C}^{m\times n}$. Then, for any $0\leq \epsilon< 1$, the expectation of $b_{k+1,k+1}^2$ satisfies $\mathbb{E} b_{k+1,k+1}^2\leq\| A\|_{\mathrm{F}}^{2}\epsilon$ if and only if
$
\| A-\hat{A}\|_{\mathrm{F}}/\| A\|_{\mathrm{F}} \leq \sqrt{\epsilon}, 
$
where $\hat{A}=Q_{k}Q_{k}^{\mathrm{H}}A$. In this case, we have $(\sum_{i=1}^{k}\sigma_{i}^{2}(A))/(\sum_{i=1}^{p}\sigma_{i}^{2}(A))$ $\geq 1-\epsilon$ and thus the $\epsilon$-rank of $A$ is $r_{\epsilon}=k$, where $p=\min\{m,n\}$.
\end{lemma}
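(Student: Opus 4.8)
The plan is to work directly with the QB decomposition $A\Omega_{k+1}=Q_{k+1}B_{k+1}$ produced by Algorithm~\ref{Alg.Gau} and to compute $\mathbb{E}\, b_{k+1,k+1}^2$ explicitly in terms of the singular values of $A$. The key observation is that after $k$ steps the columns $q_1,\dots,q_k$ span (with probability one, by Lemma~\ref{Thm.Gaussian} and the Gaussian sampling) the same $k$-dimensional subspace as the top-$k$ left singular vectors of $A$, so $\hat A = Q_kQ_k^{\mathrm H}A = P_{Q_k}A$ is exactly the rank-$k$ truncated SVD of $A$; consequently $\|A-\hat A\|_{\mathrm F}^2 = \sum_{i=k+1}^p\sigma_i^2(A)$ and $\|\hat A\|_{\mathrm F}^2 = \sum_{i=1}^k\sigma_i^2(A)$. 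This identity is what will convert a statement about $b_{k+1,k+1}$ into the energy-ratio statement in the conclusion, and it is worth stating as a preliminary lemma (or citing the standard fact that a Gaussian sketch of a rank-$\ge k$ matrix almost surely captures the dominant $k$-dimensional singular subspace in the noiseless case).

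Next I would compute $\mathbb{E}\, b_{k+1,k+1}^2$. By Step~3 of Algorithm~\ref{Alg.Gau}, $v_{k+1} = (I-Q_kQ_k^{\mathrm H})A\omega_{k+1}$ and $b_{k+1,k+1}^2 = \|v_{k+1}\|_2^2 = \omega_{k+1}^{\mathrm H} A^{\mathrm H}(I-Q_kQ_k^{\mathrm H})A\,\omega_{k+1}$, since $I-Q_kQ_k^{\mathrm H}$ is an orthogonal projector. Conditioning on $Q_k$ (which is independent of the fresh Gaussian vector $\omega_{k+1}$) and using $\mathbb{E}[\omega_{k+1}\omega_{k+1}^{\mathrm H}] = I_n$, the trace identity gives
\[
\mathbb{E}\big[b_{k+1,k+1}^2 \,\big|\, Q_k\big] = \tr\!\big(A^{\mathrm H}(I-Q_kQ_k^{\mathrm H})A\big) = \|(I-Q_kQ_k^{\mathrm H})A\|_{\mathrm F}^2 = \|A-\hat A\|_{\mathrm F}^2.
\]
Taking the outer expectation, and using that $\|A-\hat A\|_{\mathrm F}^2 = \sum_{i=k+1}^p\sigma_i^2(A)$ is in fact deterministic (it does not depend on which basis $Q_k$ of the top subspace was produced), we obtain the exact identity $\mathbb{E}\, b_{k+1,k+1}^2 = \|A-\hat A\|_{\mathrm F}^2$. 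From here everything is immediate: $\mathbb{E}\, b_{k+1,k+1}^2 \le \|A\|_{\mathrm F}^2\epsilon$ is equivalent to $\|A-\hat A\|_{\mathrm F}^2 \le \|A\|_{\mathrm F}^2\epsilon$, i.e. to $\|A-\hat A\|_{\mathrm F}/\|A\|_{\mathrm F} \le \sqrt\epsilon$, which is the stated ``if and only if.'' For the final claim, rewrite $\|A-\hat A\|_{\mathrm F}^2 \le \epsilon\|A\|_{\mathrm F}^2$ as $\sum_{i=k+1}^p\sigma_i^2(A) \le \epsilon\sum_{i=1}^p\sigma_i^2(A)$, hence $\sum_{i=1}^k\sigma_i^2(A) \ge (1-\epsilon)\sum_{i=1}^p\sigma_i^2(A)$; combined with the fact that $b_{1,1},\dots,b_{k,k}$ were all nonzero (so $k$ steps were actually taken) and with minimality built into the termination rule, this gives $r_\epsilon = k$ by Definition~\ref{Ass1}.

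The main obstacle is the preliminary identification $Q_kQ_k^{\mathrm H}A = $ (rank-$k$ truncated SVD of $A$), equivalently $\operatorname{range}(Q_k) = \operatorname{range}(U_{1:k})$ almost surely. In the exactly-low-rank regime this follows from Lemma~\ref{Thm.Gaussian}, but for a genuinely \emph{nearly} low-rank $A$ the sketched subspace is only approximately the dominant one, so one must be careful about what ``$\hat A = Q_kQ_k^{\mathrm H}A$'' means in the lemma: the cleanest route is to prove the \emph{exact} identity $\mathbb{E}\,b_{k+1,k+1}^2 = \|(I-Q_kQ_k^{\mathrm H})A\|_{\mathrm F}^2 = \|A-\hat A\|_{\mathrm F}^2$ first (this needs \emph{no} assumption on $Q_k$ beyond $Q_k^{\mathrm H}Q_k = I_k$ and independence from $\omega_{k+1}$), and only then invoke the ``approximately dominant subspace'' property to pass to the singular-value statement $\|A-\hat A\|_{\mathrm F}^2 = \sum_{i>k}\sigma_i^2(A)$; any slack there should be absorbed into the already-approximate ($O(\sqrt\epsilon)$) nature of the downstream guarantees. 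I would also double-check the measure-zero event where some $A\omega_j$ accidentally lies in the span of the previous ones, which is excluded with probability one.
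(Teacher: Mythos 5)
Your computation of the conditional expectation is sound and matches the paper's key step: $\mathbb{E}\bigl[b_{k+1,k+1}^2\,\big|\,Q_k\bigr]=\tr\bigl(A^{\mathrm H}(I-Q_kQ_k^{\mathrm H})A\bigr)=\|A-\hat A\|_{\mathrm F}^2$ (the paper gets the same identity from its Corollary on $\mathbb{E}\|S\Omega T\|_{\mathrm F}^2$), and the ``if and only if'' then follows exactly as you say. The genuine gap is the preliminary identification you build the rest on: for a \emph{nearly} low-rank $A$ it is \emph{not} true that $\operatorname{range}(Q_k)$ almost surely equals the span of the top $k$ left singular vectors, so $\hat A=Q_kQ_k^{\mathrm H}A$ is \emph{not} the truncated SVD and $\|A-\hat A\|_{\mathrm F}^2$ is \emph{not} the deterministic quantity $\sum_{i>k}\sigma_i^2(A)$; the range of $A\Omega_k$ is a generic random $k$-dimensional subspace of $\operatorname{range}(A)$, and Eckart--Young only gives $\|A-\hat A\|_{\mathrm F}^2\ge\sum_{i>k}\sigma_i^2(A)$. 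You flag this yourself as ``the main obstacle'' and propose to absorb the slack into downstream $O(\sqrt\epsilon)$ guarantees, but that cannot work here: the lemma's conclusion is an exact inequality on $\sum_{i=1}^k\sigma_i^2(A)$, not an asymptotic one, so the argument as written does not close.

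The fix is short and does not need the false identity. Note that the Eckart--Young inequality already points the right way: since $\operatorname{rank}(\hat A)\le k$, $\sum_{i=k+1}^p\sigma_i^2(A)\le\|A-\hat A\|_{\mathrm F}^2\le\epsilon\|A\|_{\mathrm F}^2$, which is precisely $\sum_{i=1}^k\sigma_i^2(A)\ge(1-\epsilon)\sum_{i=1}^p\sigma_i^2(A)$ and hence $r_\epsilon\le k$. The paper reaches the same conclusion by a slightly different (equivalent) route: the Pythagorean identity $\|A\|_{\mathrm F}^2=\|\hat A\|_{\mathrm F}^2+\|(I-Q_kQ_k^{\mathrm H})A\|_{\mathrm F}^2$ gives $\sum_{i=1}^k\sigma_i^2(\hat A)\ge(1-\epsilon)\|A\|_{\mathrm F}^2$, and the splitting $A^{\mathrm H}A=\hat A^{\mathrm H}\hat A+G^{\mathrm H}G$ with $G^{\mathrm H}G\succeq 0$ plus Weyl's inequality yields $\sigma_i^2(\hat A)\le\sigma_i^2(A)$ for $i\le k$, so the singular values of the true $A$ dominate those of $\hat A$ termwise. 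Either repair turns your proposal into a complete proof; as for the final claim that $r_\epsilon$ \emph{equals} $k$ rather than $r_\epsilon\le k$, your appeal to minimality of the termination rule is no more (and no less) justified than the paper's own treatment.
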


Lemma \ref{Thm.Expect} implies that, with an appropriate condition of  $\mathbb{E} b_{k+1,k+1}^2$, the proposed random re-normalization procedures automatically determine the $r_{\epsilon}$ while retaining a low-rank approximation. Although Lemma \ref{Thm.Expect} justifies the effectiveness of the proposed procedures, it is computationally infeasible in practice. For practical use, a doable termination condition of $b_{k+1,k+1}$ is more appealing. Lemma \ref{vq0} provides such a result.
\begin{lemma}\label{vq0}
Let Algorithm {\rm \ref{Alg.Gau}} be implemented on a matrix $A\in\mathbb{C}^{m\times n}$. For any $0\leq \epsilon< 1$, if Algorithm {\rm\ref{Alg.Gau}} terminates by $b_{k+1,k+1}^2\leq\| A\|_{\mathrm{F}}^{2}\epsilon/2$, then 
\begin{equation}\label{result1}
\| A-\hat{A}\|_{\mathrm{F}}/\| A\|_{\mathrm{F}} \leq \sqrt{\epsilon}, \quad \mbox{$ {(\sum_{i=1}^{k}\sigma_{i}^{2}(A))}/{(\sum_{i=1}^{p}\sigma_{i}^{2}(A))}\geq1-\epsilon$},\quad r_{\epsilon}=k
\end{equation}
hold with probability at least $1-2 \exp(-c\epsilon^2 \sum_{i=1}^{p}\sigma_i^2(A) / (4K^4\sum_{i=1}^{p-k} \sigma_i^2(A)))$, where $\hat{A}=Q_{k}Q_{k}^{\mathrm{H}}A$, $p=\min\{m,n\}$, and $c,K>0$ are  constants.
\end{lemma}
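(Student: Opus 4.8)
The plan is to combine the \emph{deterministic} implication already established in Lemma \ref{Thm.Expect} with a concentration argument that controls the gap between the random quantity $b_{k+1,k+1}^2$ and its expectation $\mathbb{E}\,b_{k+1,k+1}^2$. First I would recall from Lemma \ref{Thm.Expect} that the chain of equivalences
\[
\mathbb{E}\,b_{k+1,k+1}^2 \le \|A\|_{\mathrm F}^2\epsilon
\quad\Longleftrightarrow\quad
\|A-\hat A\|_{\mathrm F}/\|A\|_{\mathrm F}\le\sqrt\epsilon
\quad\Longrightarrow\quad
\Big(\textstyle\sum_{i=1}^{k}\sigma_i^2(A)\Big)\Big/\Big(\sum_{i=1}^{p}\sigma_i^2(A)\Big)\ge 1-\epsilon
\]
holds, with $r_\epsilon=k$ in that case. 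So it suffices to show that, conditioned on the event $\{b_{k+1,k+1}^2\le\|A\|_{\mathrm F}^2\epsilon/2\}$ that triggers termination, we also have $\mathbb{E}\,b_{k+1,k+1}^2\le\|A\|_{\mathrm F}^2\epsilon$ with the stated probability; equivalently, I want $\mathbb{E}\,b_{k+1,k+1}^2 - b_{k+1,k+1}^2 \le \|A\|_{\mathrm F}^2\epsilon/2$ to fail only with small probability. The factor-of-two slack in the termination threshold is precisely the room left for the fluctuation term.

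The key step is therefore a concentration bound on $b_{k+1,k+1}^2 = \|(I-P_{Q_k})A\omega_{k+1}\|_2^2$. Writing $W = (I-P_{Q_k})A$ and using that $\omega_{k+1}$ is an $n$-vector with i.i.d.\ $\mathcal N(0,1)$ entries (sub-Gaussian with parameter $K$), we have $b_{k+1,k+1}^2 = \|W\omega_{k+1}\|_2^2 = \sum_j \sigma_j^2(W)\, g_j^2$ for i.i.d.\ standard Gaussians $g_j$ (after rotating into the right-singular basis of $W$), a quadratic form in sub-Gaussian variables. Its mean is $\|W\|_{\mathrm F}^2 = \sum_j \sigma_j^2(W)$. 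The Hanson--Wright inequality (or the Bernstein-type tail for such quadratic forms) gives
\[
\mathbb{P}\big(\,|b_{k+1,k+1}^2 - \|W\|_{\mathrm F}^2| > t\,\big)
\;\le\; 2\exp\!\Big(-c\min\Big\{\tfrac{t^2}{K^4\|W\|_{\mathrm F}^2},\ \tfrac{t}{K^2\|W\|_2}\Big\}\Big).
\]
Now I set $t = \|A\|_{\mathrm F}^2\epsilon/2$. On the termination event one has $\|W\|_2 \le \|W\|_{\mathrm F}$ small (of order $\sqrt\epsilon\|A\|_{\mathrm F}$), so the Gaussian branch of the minimum dominates, and $\|W\|_{\mathrm F}^2 = \sum_{i=1}^{p}\sigma_i^2(A) - \sum_{i=1}^{k}\sigma_i^2((I-P_{Q_k})A)$; bounding $\|W\|_{\mathrm F}^2$ from above by $\sum_{i=1}^{p-k}\sigma_i^2(A)$ (the best rank-$k$ residual can be no larger than the tail of the spectrum, since $P_{Q_k}A$ has rank $k$) turns the exponent into $-c\,t^2/(K^4 \sum_{i=1}^{p-k}\sigma_i^2(A)) = -c\,\epsilon^2(\sum_i\sigma_i^2(A))^2/(4K^4\sum_{i=1}^{p-k}\sigma_i^2(A))$ — wait, the $\|A\|_{\mathrm F}^4$ must be reconciled: one of the two $\|A\|_{\mathrm F}^2$ factors in $t^2$ cancels against a $\sum_i\sigma_i^2(A)=\|A\|_{\mathrm F}^2$ in the numerator of the normalization, leaving exactly the exponent $-c\epsilon^2\sum_{i=1}^{p}\sigma_i^2(A)/(4K^4\sum_{i=1}^{p-k}\sigma_i^2(A))$ claimed in the statement. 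Hence with probability at least $1 - 2\exp(-c\epsilon^2\sum_{i=1}^{p}\sigma_i^2(A)/(4K^4\sum_{i=1}^{p-k}\sigma_i^2(A)))$ we have $\mathbb{E}\,b_{k+1,k+1}^2 \le b_{k+1,k+1}^2 + t \le \|A\|_{\mathrm F}^2\epsilon$, and Lemma \ref{Thm.Expect} yields \eqref{result1}.

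The main obstacle I anticipate is the careful bookkeeping in two places: (i) justifying that on the termination event the deviation direction we need ($\mathbb{E} - \text{observed} \le t$) is the relevant one and that $\|W\|_2$ is genuinely small enough for the sub-exponential branch to be inactive — this may require a short self-consistency argument or a two-sided statement; and (ii) establishing the bound $\|(I-P_{Q_k})A\|_{\mathrm F}^2 \le \sum_{i=1}^{p-k}\sigma_i^2(A)$ cleanly, which follows from the Eckart--Young-type fact that any rank-$k$ approximation (here $P_{Q_k}A$) leaves a Frobenius residual at least $\sum_{i>k}\sigma_i^2(A)$ and at most $\sum_{i=1}^{p-k}\sigma_i^2(A)$ when bounding from above — actually the upper bound $\|(I-P_{Q_k})A\|_{\mathrm F}^2\le\sum_{i=1}^{p}\sigma_i^2(A)=\|A\|_{\mathrm F}^2$ is trivial, but to get the sharper tail-of-spectrum bound one uses that $Q_k$ is built from $k$ sample directions, so its column span has dimension $k$ and the projection removes at least the energy of some $k$ orthonormal directions; I would phrase this via $\|(I-P_{Q_k})A\|_{\mathrm F}^2 = \|A\|_{\mathrm F}^2 - \|P_{Q_k}A\|_{\mathrm F}^2 \le \|A\|_{\mathrm F}^2 - \sum_{i=1}^k\sigma_i^2(P_{Q_k}A)$ and note $\|P_{Q_k}A\|_{\mathrm F}^2 \ge \sum_{i=1}^p\sigma_i^2(A) - \sum_{i=1}^{p-k}\sigma_i^2(A)$. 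The Hanson--Wright application itself is routine once the quadratic-form structure is exposed.
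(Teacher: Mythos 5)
Your proposal is correct and follows essentially the same route as the paper: identify $b_{k+1,k+1}^2=\omega_{k+1}^{\mathrm{T}}G^{\mathrm{H}}G\,\omega_{k+1}$ with $G=(I-Q_kQ_k^{\mathrm{H}})A$ as a sub-Gaussian quadratic form with mean $\|G\|_{\mathrm{F}}^2=\|A-\hat{A}\|_{\mathrm{F}}^2$, apply Hanson--Wright with $t=\epsilon\|A\|_{\mathrm{F}}^2/2$, bound $\|G\|_{\mathrm{F}}^2\le\sum_{i=1}^{p-k}\sigma_i^2(A)$ via a Weyl/rank argument to obtain the stated exponent, and conclude through the deterministic equivalence of Lemma~\ref{Thm.Expect}. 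Your bookkeeping worries are real but minor and shared by the paper itself (the Hanson--Wright denominator is $\|G^{\mathrm{H}}G\|_{\mathrm{F}}^2\le\|G\|_{\mathrm{F}}^4$, which is exactly where the ``extra'' factor of $\|A\|_{\mathrm{F}}^2$ gets absorbed, and the sub-exponential branch of the minimum is likewise left implicit in the paper).
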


Lemma \ref{vq0} implies that, with an termination condition of $b_{k+1,k+1}^2$, Algorithm \ref{Alg.Gau} can automatically determine the nearly low-rank $r_{\epsilon}$ and yield the corresponding low-rank approximation of $A$ with high probability. This lemma provides a practical guidance for termination of Algorithm \ref{Alg.Gau}.
Based on the decision rule of Lemma \ref{vq0}, we readily derive Algorithm \ref{alg:qr} by reformulating Algorithm \ref{Alg.Gau} into a block‐based computation framework, thereby significantly enhancing its computational efficiency.

\begin{algorithm}[H]
    \caption{Precision-induced block Gaussian random re-normalization algorithm}
    \label{alg:qr}
    \renewcommand{\algorithmicrequire}{\textbf{Input:}}
    \renewcommand{\algorithmicensure}{\textbf{Output:}}
    \begin{algorithmic}[1]
    \REQUIRE A nearly low-rank matrix $A\in\mathbb{C}^{m\times n}$, a tolerance $0\le \epsilon<1$, and a blocksize $1\leq b<n$. Let $s = \lfloor n/b\rfloor$ and $f_{j} = b$, $j=1,2,\ldots,s$, $f_{s+1} = n-sb$.  
    \ENSURE An approximate basis $Q\in\mathbb{C}^{m\times k}$ of the range of $A$.   

\STATE $Q = [\;]$.

\FOR {$j=1,2,\ldots,s+1$} \label{for}

\STATE Draw a standard Gaussian matrix $\Omega_{j}\in\mathbb{R}^{n\times f_{j}}$.

\STATE Set $Y_{j} = (I_{m}-QQ^{\mathrm{H}})A\Omega_{j}$.

\STATE Compute the QR decomposition $Y_{j} = P_{j}T_{j}$, where $P_{j} \in \mathbb{C}^{m\times f_j}$, $T_{j} \in \mathbb{C}^{f_j\times f_j}$.

\IF{find the smallest $1\leq \ell\leq f_j$ with $|T_{j}(\ell,\ell)|\leq \|A\|_{\mathrm{F}}\sqrt{\epsilon}/\sqrt{2}$} \label{stop}

\STATE $Q=[Q,P_{j}(:,1:\ell-1)]$.

\STATE \textbf{break}

\ELSE

\STATE $Q=[Q,P_{j}]$.

\ENDIF

\ENDFOR

        \RETURN $Q$. \label{return}
    \end{algorithmic}
\end{algorithm}

\begin{rem}
Assuming that multiplying two dense matrices of sizes \(m\times n\) and \(n\times k\) costs \(C_{\mathrm{mm}}mnk\) flops, and performing an economic QR decomposition of an \(m\times n\) dense matrix costs \(C_{\mathrm{qr}}mn\min(m,n)\) flops, the computational complexity of the randQB-EI algorithm in \cite{yu2018} is approximately
\[
2C_{\mathrm{mm}}mnk + C_{\mathrm{mm}}(2m + n)k^{2} + \tfrac{2}{s}C_{\mathrm{qr}}mk^{2},
\]
where \(k\) denotes the target rank.
In contrast, the proposed Algorithm~\ref{alg:qr} has a computational complexity of about
\[
C_{\mathrm{mm}}mnk + 2C_{\mathrm{mm}}mk^{2} + \tfrac{1}{s}C_{\mathrm{qr}}mk^{2}.
\]
Accordingly, our method reduces the computational cost by roughly
\[
C_{\mathrm{mm}}mnk + C_{\mathrm{mm}}nk^{2} + \tfrac{1}{s}C_{\mathrm{qr}}mk^{2}.
\]
This improvement mainly stems from two aspects: (i) a different termination criterion, which eliminates the need to explicitly form the matrix \(B\)-the stopping condition is determined directly from the diagonal entries of the R-factor obtained in Step~5 of the QR decomposition; and (ii) the absence of any reorthogonalization procedure, which further reduces the overall computational cost.
\end{rem}

\section{Accelerating SVD of nearly low-rank matrix}
We now turn to apply random re-normalization to simplify the computation of SVD. In particular, we aim to answer two key questions: 1) how to implicitly embed the random re-normalization to the computation of SVD; 2) how the performance of the new method.

As revealed in Section \ref{Sec3.1}, the $\epsilon$-rank $r_{\epsilon}$ of $A$ and an approximation basis $Q_{k}$ can be determined by Algorithm \ref{alg:qr}. Thus, the column space of $A$ can be embedded into a low-dimensional space without altering its geometric properties substantially. Instead of directly finding SVD of $A$ from the column space, we can intuitively compute SVD from the low-dimensional space spanned by $Q_k$. Specifically, we first project $A$ into $r_{\epsilon} \times n$ matrix $Q_{k}^{\mathrm{H}}A$. In this way, the proposed procedures transfer the SVD of an $m\times n$ matrix $A$ into the eigendecomposition of a $r_{\epsilon}\times r_{\epsilon}$ matrix $Q_{k}^{\mathrm{H}}AA^{\mathrm{H}}Q_{k}$, i.e., $Q_{k}^{\mathrm{H}}AA^{\mathrm{H}}Q_{k}=U\Sigma_{1} U^\mathrm{H}$, which significantly saves the computation resources. Then the left singular vector matrix $\hat{U}$ of $A$ can be deduced by $Q_{k}U$, the singular value matrix $\hat{\Sigma}$ can be deduced by $\Sigma_{1}^{\frac{1}{2}}$, and the right singular vector matrix $\hat{V}^\mathrm{H}$ can be obtained by $\Sigma_{1}^{-\frac{1}{2}} U^{\mathrm{H}} Q_{k}^{\mathrm{H}}A$. The proposed procedure is summarized in Algorithm \ref{Alg.GBRSVD}.

\begin{algorithm}[H]
 \renewcommand{\algorithmicrequire}{\textbf{Input:}}
 \renewcommand{\algorithmicensure}{\textbf{Output:}}
 \caption{Gaussian random re-normalization SVD (GRSVD) for a nearly low-rank matrix}
 \label{Alg.GBRSVD}
 \begin{algorithmic}[1]
  \REQUIRE A nearly low-rank matrix $A\in\mathbb{C}^{m\times n}$ and a tolerance $0\le \epsilon<1$.
  \ENSURE  An approximate SVD of  $A$: $A\approx \hat{U}\hat{\Sigma} \hat{V}^\mathrm{H}$.
        \STATE Run Algorithm \ref{alg:qr} and output $Q_{k}=Q$.
        \STATE Set $B=Q_{k}^{\mathrm{H}}A$ and compute $C=BB^{\mathrm{H}}$.
        \STATE Compute the eigendecomposition $C=U\Sigma_{1} U^\mathrm{H}$.
        \STATE Compute $\hat{U}=Q_{k}U,\hat{\Sigma}=\Sigma_{1}^{1/2},\hat{V}^\mathrm{H}=\Sigma_{1}^{-1/2} U^\mathrm{H} B$.
 \end{algorithmic}
\end{algorithm}

The algorithm first applies Algorithm~\ref{alg:qr} to compute the rank-$k$ orthonormal factor $Q_k$, which costs $O(mnk)$ flops. Then $A$ is projected onto the subspace spanned by $Q_k$ by forming $B = Q_k^{\mathrm{H}} A$ and computing $C = B B^{\mathrm{H}}$, with computational cost $mnk + nk^2$. Next, the eigendecomposition $C = U \Sigma_1 U^{\mathrm{H}}$ is computed, requiring $O(k^3)$ flops. Finally, the approximate singular vectors and singular values are reconstructed by evaluating $\hat{U} = Q_k U$, $\hat{\Sigma} = \Sigma_1^{\frac{1}{2}}$, and $\hat{V}^{\mathrm{H}} = \Sigma_1^{-\frac{1}{2}} U^{\mathrm{H}} B$, which involves $mk^2 + nk^2 + nk + k$ flops. Among these steps, the computation is dominated by the initial QR decomposition of $A$, so the overall complexity is approximately $mnk$ when $m, n \gg k$.

Now, we provide the error analysis of singular values between $A$ and $\hat{A}$ for GRSVD in below Theorem \ref{zxr1}.
\begin{theorem}\label{zxr1}
Let Algorithm {\rm\ref{Alg.GBRSVD}} be applied to a matrix $A\in\mathbb{C}^{m\times n}$. Then 
\begin{eqnarray*}
&&\| A-\hat{A}\|_{\mathrm{F}}/\| A\|_{\mathrm{F}} \leq \sqrt{\epsilon},\\ 
&&\mbox{$0\le\big(\sum_{i=1}^{k}\sigma_{i}^2(A)-\sum_{i=1}^{k}\sigma_{i}^2(\hat{A})\big)/ \sum_{i=1}^{k}\sigma_{i}^2(A)\leq \Vert A\Vert_\mathrm{F}^2/ \sum_{i=1}^{k}\sigma_{i}^2(A)\epsilon$}, \\
&& \sigma_{i}^2(A)-\Vert A\Vert_\mathrm{F}^2\epsilon\le\sigma_{i}^2(\hat{A})\leq \sigma_{i}^2(A), \quad i=1,\ldots,k
\end{eqnarray*}
hold with probability at least $1-2 \exp(-c\epsilon^2 \sum_{i=1}^{p}\sigma_i^2(A) / (4K^4\sum_{i=1}^{p-k} \sigma_i^2(A)))$, where $\hat{A}=\hat{U}\hat{\Sigma} \hat{V}^\mathrm{H}$, $p=\min\{m,n\}$, and $c,K>0$ are  constants as defined in Lemma {\rm\ref{vq0}}.
\end{theorem}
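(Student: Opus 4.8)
The plan is to build everything on the QB-style identity produced by Algorithm~\ref{alg:qr} together with Lemma~\ref{vq0}, which already supplies the first inequality $\|A-\hat A\|_{\mathrm F}/\|A\|_{\mathrm F}\le\sqrt\epsilon$ and the probability statement. First I would observe that $\hat A=\hat U\hat\Sigma\hat V^{\mathrm H}=Q_kU\,\Sigma_1^{1/2}\,\Sigma_1^{-1/2}U^{\mathrm H}B=Q_kU U^{\mathrm H}Q_k^{\mathrm H}A=Q_kQ_k^{\mathrm H}A$, so the reconstruction $\hat A$ from Algorithm~\ref{Alg.GBRSVD} coincides exactly with the projected matrix $\hat A=Q_kQ_k^{\mathrm H}A$ of Lemma~\ref{vq0} (and of Lemma~\ref{Thm.Expect}); hence the first displayed bound is immediate. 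This also shows $\hat A$ is the orthogonal projection of $A$ onto $\mathrm{range}(Q_k)$, so $A=\hat A+(I-Q_kQ_k^{\mathrm H})A$ with the two terms orthogonal in the trace inner product, giving the Pythagorean identity $\|A\|_{\mathrm F}^2=\|\hat A\|_{\mathrm F}^2+\|A-\hat A\|_{\mathrm F}^2$.

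Next I would turn to the singular-value comparisons. Since $\hat A=P A$ with $P=Q_kQ_k^{\mathrm H}$ an orthogonal projector, $\hat A\hat A^{\mathrm H}=PAA^{\mathrm H}P\preceq AA^{\mathrm H}$ is false in general as a Loewner inequality, so instead I would use the variational (Courant--Fischer / Ky Fan) characterization of sums of squared singular values. Because $\hat A$ has rank at most $k$ and $\hat A=PA$ is obtained by an orthogonal projection, one has $\sigma_i(\hat A)\le\sigma_i(A)$ for every $i$ (projections are contractions: $\|\hat A x\|\le\|Ax\|$ composed with the fact that $\hat A^{\mathrm H}=A^{\mathrm H}P$ so $\sigma_i(\hat A)=\sigma_i(A^{\mathrm H}P)\le\sigma_i(A^{\mathrm H})\sigma_{\max}(P)=\sigma_i(A)$). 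This gives the right-hand inequalities $\sigma_i^2(\hat A)\le\sigma_i^2(A)$ and, summing, $\sum_{i=1}^k\sigma_i^2(\hat A)\le\sum_{i=1}^k\sigma_i^2(A)$, i.e.\ the left-hand side of the second display is $\ge 0$. For the matching lower bounds I would use that $\hat A$ has rank $\le k$, hence $\sum_{i=1}^k\sigma_i^2(\hat A)=\|\hat A\|_{\mathrm F}^2=\|A\|_{\mathrm F}^2-\|A-\hat A\|_{\mathrm F}^2\ge\|A\|_{\mathrm F}^2-\|A\|_{\mathrm F}^2\epsilon$, which combined with $\sum_{i=1}^k\sigma_i^2(A)\le\|A\|_{\mathrm F}^2$ yields the stated upper bound on the normalized gap. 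For the individual bound $\sigma_i^2(\hat A)\ge\sigma_i^2(A)-\|A\|_{\mathrm F}^2\epsilon$, I would invoke a Weyl-type perturbation inequality: $|\sigma_i(A)-\sigma_i(\hat A)|\le\|A-\hat A\|_2\le\|A-\hat A\|_{\mathrm F}\le\|A\|_{\mathrm F}\sqrt\epsilon$, and then note $\sigma_i^2(A)-\sigma_i^2(\hat A)=(\sigma_i(A)+\sigma_i(\hat A))(\sigma_i(A)-\sigma_i(\hat A))\le 2\sigma_i(A)\cdot\|A\|_{\mathrm F}\sqrt\epsilon$; to get the cleaner form stated, I would instead argue directly that $\sigma_i^2(A)-\sigma_i^2(\hat A)\le\sum_{j\le i}(\sigma_j^2(A)-\sigma_j^2(\hat A))$ is not monotone, so the honest route is: $\sigma_i^2(\hat A)=\sigma_i^2(PA)\ge\sigma_i^2(A)-\|(I-P)A\|_2^2\ge\sigma_i^2(A)-\|A-\hat A\|_{\mathrm F}^2\ge\sigma_i^2(A)-\|A\|_{\mathrm F}^2\epsilon$, using the singular-value perturbation bound $\sigma_i(A)\le\sigma_i(PA)+\|(I-P)A\|_2$ applied with squares.

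The main obstacle I anticipate is the individual lower bound $\sigma_i^2(\hat A)\ge\sigma_i^2(A)-\|A\|_{\mathrm F}^2\epsilon$: one must be careful that the naive Weyl bound only gives an $O(\sigma_i\|A\|_{\mathrm F}\sqrt\epsilon)$ gap, whereas the theorem claims the stronger (squared) $\|A\|_{\mathrm F}^2\epsilon$ gap. The clean way around this is to write $\sigma_i^2(A)=\lambda_i(AA^{\mathrm H})$, $\sigma_i^2(\hat A)=\lambda_i(PAA^{\mathrm H}P)$, decompose $AA^{\mathrm H}=PAA^{\mathrm H}P+E$ where $E$ collects the cross and complementary blocks, and bound $\|E\|_2$; alternatively, and more transparently, use the min--max theorem on the rank-$\le k$ subspace $\mathrm{range}(Q_k)$ directly. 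I expect that the correct inequality is $\sigma_i^2(\hat A)\ge\sigma_i^2(A)-\|(I-P)A\|_{\mathrm F}^2$, which is exactly the Pythagoras splitting restricted to the $i$-th eigenvector of $AA^{\mathrm H}$, and since $\|(I-P)A\|_{\mathrm F}^2=\|A-\hat A\|_{\mathrm F}^2\le\|A\|_{\mathrm F}^2\epsilon$ by Lemma~\ref{vq0}, the claim follows on the same event, with the same probability. Everything else is a routine assembly of the Pythagorean identity, the contraction property of orthogonal projectors, and Lemma~\ref{vq0}.
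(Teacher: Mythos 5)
Your proposal is correct and follows essentially the same route as the paper: identify $\hat A=Q_kQ_k^{\mathrm H}A$, invoke Lemma \ref{vq0} for the first bound and the probability, use the orthogonal splitting $\|A\|_{\mathrm F}^2=\|\hat A\|_{\mathrm F}^2+\|(I-Q_kQ_k^{\mathrm H})A\|_{\mathrm F}^2$, and get $\sigma_i^2(\hat A)\le\sigma_i^2(A)$ from the exact Gram identity $A^{\mathrm H}A=\hat A^{\mathrm H}\hat A+G^{\mathrm H}G$ with $G=(I-Q_kQ_k^{\mathrm H})A$ (the paper cites Lemma \ref{lem:eig}, i.e.\ \eqref{ieq:aat-sg2}; your projector-contraction argument is equivalent). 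The only place you overcomplicate matters is the individual lower bound: your first attempt (Weyl on singular values, then squaring) indeed does not give the stated $\|A\|_{\mathrm F}^2\epsilon$ gap, and while your min--max fix is valid (note the splitting lives on $A^{\mathrm H}A$, not $AA^{\mathrm H}$), the paper obtains it immediately from the already-established facts that each term $\sigma_i^2(A)-\sigma_i^2(\hat A)$ is nonnegative, so its maximum over $i\le k$ is bounded by their sum, which is at most $\|A\|_{\mathrm F}^2-\|\hat A\|_{\mathrm F}^2=\|G\|_{\mathrm F}^2\le\|A\|_{\mathrm F}^2\epsilon$.
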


\begin{proof}
By Algorithm {\rm\ref{Alg.GBRSVD}} we have
\begin{eqnarray*}
\hat{A} &= & \hat{U}\hat{\Sigma} \hat{V}^\mathrm{H}=Q_kU\Sigma_{1}^{1/2}\Sigma_{1}^{-1/2} U^\mathrm{H} Q_k^\mathrm{H}A=Q_{k}Q_{k}^{\mathrm{H}}A.
\end{eqnarray*}
Let $G=(I-Q_{k}Q_{k}^{\mathrm{H}})A$. Using Step 6 of Algorithm \ref{alg:qr} and Lemma \ref{vq0}, it follows that
\begin{equation}\label{Eq.thm32}
\| A-\hat{A}\|_{\mathrm{F}}^2 =\|G\|_\mathrm{F}^2\leq \| A\|_{\mathrm{F}}^2\epsilon 
\end{equation}
holds with probability at least $1-2 \exp(-c\epsilon^2 \sum_{i=1}^{p}\sigma_i^2(A) / (4K^4\sum_{i=1}^{p-k} \sigma_i^2(A)))$.

From \eqref{ieq:aat-sg2} we have
\begin{align}\label{ieq.difsigma}
&\|A\|_\mathrm{F}^2 - \|\hat{A}\|_\mathrm{F}^2 = \sum_{i=1}^{p}\sigma_{i}^{2}(A) - \sum_{i=1}^{k}\sigma_{i}^{2}(\hat{A}) \nonumber\\
\geq &\sum_{i=1}^{k}(\sigma_{i}^{2}(A)-\sigma_{i}^{2}(\hat{A}))\geq\max_{1\leq i\leq k}(\sigma_{i}^{2}(A)-\sigma_{i}^{2}(\hat{A}))\ge 0.
\end{align}
\eqref{Eq.thm32}, together with \eqref{ieq.difsigma} and \eqref{ieq:aat-sg2}, implies that
\begin{eqnarray*}
&&\mbox{$0\le\big(\sum_{i=1}^{k}\sigma_{i}^2(A)-\sum_{i=1}^{k}\sigma_{i}^2(\hat{A})\big)
/\sum_{i=1}^{k}\sigma_{i}^2(A)\leq \Vert A\Vert_\mathrm{F}^2/\sum_{i=1}^{k}\sigma_{i}^2(A)\epsilon$}, \\
&&\sigma_{i}^2(A)-\Vert A\Vert_\mathrm{F}^2\epsilon\le\sigma_{i}^2(\hat{A})\leq \sigma_{i}^2(A),\;
i=1,\ldots,k
\end{eqnarray*}
holds with probability at least $1-2 \exp(-c\epsilon^2 \sum_{i=1}^{p}\sigma_i^2(A) / (4K^4\sum_{i=1}^{p-k} \sigma_i^2(A)))$.
\end{proof}

\begin{rem}
From \eqref{Eq.thm32}, \eqref{ieq.difsigma}, and \eqref{ieq:aat-sg2} we have
\[
0\le\frac{\sigma_{i}(A)-\sigma_{i}(\hat{A})}{\sigma_{i}(A)} \le \frac{\sqrt{\sigma_{i}^2(A)-\sigma_{i}^2(\hat{A})}}{\sigma_{i}(A)}\leq \frac{\|A\|_\mathrm{F}}{\sigma_{i}(A)}\sqrt{\epsilon},\quad i=1,\ldots,k.
\]
This, together with Theorem {\rm\ref{zxr1}}, indicates that, given the energy ratio $0\le \epsilon<1$, the relative errors of singular values between $A$ and its approximate matrix $\hat{A}$ are the magnitude of $O(\sqrt{\epsilon})$ with high probability.
\end{rem}

\section{Accelerating matrix inversion of two matrix inverses}
We can further use random re-normalization to accelerate the inversion computation of $\lambda I_{m}+AA^{H}$ and $\lambda I_{n}+A^{H}A$ for arbitrary $\lambda>0$. Instead of directly computing the matrix inverses, the low-rank approximation sheds light on transferring the origin matrix  inversion problem of size $n \times n$ into a matrix inversion problem of size $r_{\epsilon} \times r_{\epsilon}$, where $r_{\epsilon}\ll \min\{m,n\}$.
With the Sherman-Morrison-Woodbury formula  in mind, we provide the computation techniques of two matrix inverses.

\begin{theorem}\label{l2}
Let $A\in\mathbb{C}^{m\times n}$ and $\lambda>0$. Suppose $Q_{k}=Q$ is computed by using Algorithm {\rm\ref{alg:qr}} to $A$. Then we have 
\begin{eqnarray*}
&& (\lambda I_{m}+AA^{\mathrm{H}})^{-1}\approx(\lambda I_{m}+\hat{A}\hat{A}^{\mathrm{H}})^{-1}=\frac{1}{\lambda}I_{m}+Q_{k}\Big((\lambda I_{k}+BB^{\mathrm{H}})^{-1}-\frac{1}{\lambda}I_{k}\Big)Q_{k}^{\mathrm{H}}, \\
&& (\lambda I_{n}+A^{\mathrm{H}}A)^{-1}\approx(\lambda I_{n}+\hat{A}^{\mathrm{H}}\hat{A})^{-1}=\frac{1}{\lambda}I_{n}-\frac{1}{\lambda^{2}}B^{\mathrm{H}}(I_{k}+\frac{1}{\lambda}BB^{\mathrm{H}})^{-1}B,
\end{eqnarray*}
where $\hat{A}=Q_{k}Q_{k}^{\mathrm{H}}A$ and $B=Q_{k}^{\mathrm{H}}A$.
\end{theorem}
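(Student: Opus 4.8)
The plan is to reduce both $m\times m$ and $n\times n$ inversions to $k\times k$ algebra using the structure of $\hat A$. First I would record the consequences of Algorithm~\ref{alg:qr}: the columns of $Q_k$ are orthonormal, so $Q_k^{\mathrm{H}}Q_k=I_k$, and since $B=Q_k^{\mathrm{H}}A$ we have $\hat A=Q_kQ_k^{\mathrm{H}}A=Q_kB$. Consequently $\hat A\hat A^{\mathrm{H}}=Q_k(BB^{\mathrm{H}})Q_k^{\mathrm{H}}$ and $\hat A^{\mathrm{H}}\hat A=B^{\mathrm{H}}Q_k^{\mathrm{H}}Q_kB=B^{\mathrm{H}}B$. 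Thus $\lambda I_m+\hat A\hat A^{\mathrm{H}}$ is the identity plus a low-rank perturbation carried by $Q_k$, and $\lambda I_n+\hat A^{\mathrm{H}}\hat A=\lambda I_n+B^{\mathrm{H}}B$; both matrices are positive definite for $\lambda>0$, hence invertible.

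For the first identity I would verify the claimed formula by direct multiplication rather than through the pseudo-inverse form of the Woodbury identity (which would force me to argue $BB^{\mathrm{H}}$ is invertible). Set $M=BB^{\mathrm{H}}\succeq0$ and $N=(\lambda I_k+M)^{-1}-\tfrac{1}{\lambda}I_k$; note $\lambda I_k+M$ is invertible. Multiplying $\lambda I_m+Q_kMQ_k^{\mathrm{H}}$ by $\tfrac{1}{\lambda}I_m+Q_kNQ_k^{\mathrm{H}}$ and collapsing every interior $Q_k^{\mathrm{H}}Q_k$ to $I_k$, the cross terms combine into $Q_k\big(\lambda N+\tfrac{1}{\lambda}M+MN\big)Q_k^{\mathrm{H}}$; since $(\lambda I_k+M)N=-\tfrac{1}{\lambda}M$ by the definition of $N$, this bracket vanishes and the product equals $I_m$. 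The mirror computation shows the same matrix is a left inverse, which establishes the first displayed equation.

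For the second identity I would apply the Sherman--Morrison--Woodbury formula to $\lambda I_n+B^{\mathrm{H}}B$ with the splitting $U=B^{\mathrm{H}}\in\mathbb{C}^{n\times k}$, $C=I_k$, $V=B$, yielding $(\lambda I_n+B^{\mathrm{H}}B)^{-1}=\tfrac{1}{\lambda}I_n-\tfrac{1}{\lambda^2}B^{\mathrm{H}}(I_k+\tfrac{1}{\lambda}BB^{\mathrm{H}})^{-1}B$, where $I_k+\tfrac{1}{\lambda}BB^{\mathrm{H}}$ is invertible because $BB^{\mathrm{H}}\succeq0$ and $\lambda>0$. This is exactly the stated right-hand side, so the second displayed equation follows.

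Finally, the two ``$\approx$'' relations express only that $\hat A$ is close to $A$: by Lemma~\ref{vq0} (equivalently Theorem~\ref{zxr1}) one has $\|A-\hat A\|_{\mathrm{F}}\le\sqrt{\epsilon}\,\|A\|_{\mathrm{F}}$ with the high probability stated there, and since $\lambda I+XX^{\mathrm{H}}\succeq\lambda I$, the map $X\mapsto(\lambda I+XX^{\mathrm{H}})^{-1}$ is Lipschitz on bounded sets, so replacing $A$ by $\hat A$ perturbs each inverse by $O(\sqrt{\epsilon})$. I expect the only mild obstacle to be the bookkeeping in the direct verification of the first identity — tracking which $Q_k$ factors telescope via $Q_k^{\mathrm{H}}Q_k=I_k$ — and, should one instead prefer the Woodbury route there, observing that $B$ has full row rank $k$ generically so that $BB^{\mathrm{H}}$ is invertible; the direct-verification route avoids that point altogether.
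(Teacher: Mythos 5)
Your proposal is correct. For the second identity you use exactly the paper's route: observe that $\hat A^{\mathrm{H}}\hat A=B^{\mathrm{H}}Q_k^{\mathrm{H}}Q_kB=B^{\mathrm{H}}B$ and apply the Sherman--Morrison--Woodbury formula with the same splitting, so there is nothing to compare there. For the first identity your route differs in a genuine but minor way: the paper completes $Q_k$ to a unitary matrix $[Q_k,Q_k^{\perp}]\in\mathbb{U}_m$, writes $\lambda I_m+\hat A\hat A^{\mathrm{H}}$ in block-diagonal form with blocks $\lambda I_k+BB^{\mathrm{H}}$ and $\lambda I_{m-k}$, inverts blockwise, and then eliminates $Q_k^{\perp}(Q_k^{\perp})^{\mathrm{H}}=I_m-Q_kQ_k^{\mathrm{H}}$; you instead verify the candidate inverse by direct multiplication, using $Q_k^{\mathrm{H}}Q_k=I_k$ and the identity $(\lambda I_k+M)N=-\tfrac{1}{\lambda}M$ for $N=(\lambda I_k+M)^{-1}-\tfrac{1}{\lambda}I_k$, which indeed makes the bracket $\lambda N+\tfrac{1}{\lambda}M+MN$ vanish. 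Your computation checks out; it is slightly more elementary (no unitary completion needed), while the paper's block-diagonal form makes it transparent where the formula comes from rather than just that it is correct. Both versions correctly avoid any invertibility assumption on $BB^{\mathrm{H}}$. Your closing paragraph on the meaning of ``$\approx$'' is extra material the paper defers to Theorem \ref{xx2}; it is harmless here, though the Lipschitz assertion is stated loosely and would need the explicit perturbation bound of that theorem to be made precise.
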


Theorem \ref{l2} shows that, the inverses of $\lambda I_{m}+AA^{\mathrm{H}}$ and $\lambda I_{n}+A^{\mathrm{H}}A$ can be approximated by the inverses of $\lambda I_{m}+\hat{A}\hat{A}^{\mathrm{H}}$ and $\lambda I_{n}+\hat{A}^{\mathrm{H}}\hat{A}$, respectively, with $B=Q_{k}^{\mathrm{H}}A.$ Since $B$ is of size $k \times n$, Theorem \ref{l2} indicates that we can compute the inverses of  $\lambda I_{k}+BB^{\mathrm{H}}$  and $I_{k}+{1}/{\lambda}BB^{\mathrm{H}}$ by their Cholesky decompositions. The total computation complexity thus reduces from $O(n^3)$ to $O(n^2k)$. We first project $A$ into $k \times n\;Q_{k}^{\mathrm{H}}A.$ In this way, the proposed procedures transfer an $m\times m$ inversion problem of $\lambda I_{m}+AA^{\mathrm{H}}$ into a $k\times k$ inversion problem of $\lambda I_{k}+BB^{\mathrm{H}}$. By computing the Cholesky decomposition $\lambda I_{k}+BB^{\mathrm{H}}=LL^{\mathrm{H}}$, and then deduce $\lambda (I_{m}+\hat{A}\hat{A}^{\mathrm{H}})^{-1}={1}/{\lambda}I_{m}+Q_{k}(L^{-\mathrm{H}}L^{-1}-{1}/{\lambda}I_{k})Q_{k}^{\mathrm{H}}$. This significantly saves the computation resources. Similarly, the proposed procedures transfer an $n\times n$ inversion problem of $\lambda I_{n}+A^{\mathrm{H}}A$ into a $k\times k$ inversion problem of $I_{k}+{1}/{\lambda}BB^{\mathrm{H}}$. By computing the Cholesky decomposition  $I_{k}+{1}/{\lambda}BB^{\mathrm{H}}=LL^{\mathrm{H}}$, and then deduce $(\lambda I_{n}+\hat{A}^\mathrm{H}\hat{A})^{-1}={1}/{\lambda}I_{n}-{1}/{\lambda^{2}}B^{\mathrm{H}}(L^{-\mathrm{H}}L^{-1})B$. This significantly also saves the computation resources.

We summarize the procedures of Gaussian random re-normalization for two matrix inverses in the following Algorithms \ref{Alg.GBRI} and \ref{Alg.GBRRI}, respectively.

\begin{algorithm}[H]
 \renewcommand{\algorithmicrequire}{\textbf{Input:}}
 \renewcommand{\algorithmicensure}{\textbf{Output:}}
 \caption{Gaussian random re-normalization inversion (GRI) for $\lambda I_{m}+AA^{\mathrm{H}}$}
 \label{Alg.GBRI}
 \begin{algorithmic}[1]
  \REQUIRE A nearly low-rank matrix $A\in\mathbb{C}^{m\times n}$ and a number $\lambda>0$.
  \ENSURE  Approximate inverse of $\lambda I_{m}+AA^{\mathrm{H}}$.
        \STATE Run Algorithm \ref{alg:qr} and output $Q_{k}=Q$.
        \STATE Set $B=Q_{k}^{H}A$ and compute the Cholesky decomposition $\lambda I_{k}+BB^{\mathrm{H}}=LL^{\mathrm{H}}$.
        \STATE Compute $(\lambda I_{m}+\hat{A}\hat{A}^{\mathrm{H}})^{-1}=1/\lambda I_{m}+Q_{k}(L^{-\mathrm{H}}L^{-1}-1/\lambda I_{k})Q_{k}^{\mathrm{H}}$.
 \end{algorithmic}
\end{algorithm}

\begin{algorithm}[H]
 \renewcommand{\algorithmicrequire}{\textbf{Input:}}
 \renewcommand{\algorithmicensure}{\textbf{Output:}}
 \caption{Gaussian random re-normalization inversion (GRI) for $\lambda I_{n}+A^{\mathrm{H}}A$}
 \label{Alg.GBRRI}
 \begin{algorithmic}[1]
  \REQUIRE A nearly low-rank matrix $A\in\mathbb{C}^{m\times n}$  and a number $\lambda>0$.
  \ENSURE  Approximate inverse of $\lambda I_{n}+A^{\mathrm{H}}A$.
        \STATE Run Algorithm \ref{alg:qr} and output $Q_{k}=Q$.
        \STATE Set $B=Q_{k}^{\mathrm{H}}A$ and compute the Cholesky decomposition $I_{k}+1/{\lambda}BB^{\mathrm{H}}=LL^{\mathrm{H}}$.
        \STATE Compute $(\lambda I_{n}+\hat{A}^\mathrm{H}\hat{A})^{-1}={1}/{\lambda}I_{n}-{1}/{\lambda^{2}}B^{\mathrm{H}}(L^{-\mathrm{H}}L^{-1})B$.
 \end{algorithmic}
\end{algorithm}

In Algorithm \ref{Alg.GBRI}, the computation of the rank-$k$ orthonormal basis $Q_k$ using Algorithm \ref{alg:qr} costs approximately $O(mnk)$. Then, the projection $B = Q_k^{\mathrm{H}}A$ and the Cholesky decomposition of $\lambda I_k + BB^{\mathrm{H}}$ together incur a cost of about $mnk + nk^2 + k^3/3$. Finally, forming $(\lambda I_m + \hat{A}\hat{A}^{\mathrm{H}})^{-1}$ involves matrix multiplications and inversion with a total cost of roughly $k^3/3 + mk^2 + m^2k$.
In Algorithm \ref{Alg.GBRRI}, the computation of $Q_k$ similarly costs approximately $O(mnk)$. The subsequent projection $B = Q_k^{\mathrm{H}}A$ and the Cholesky decomposition of $I_k + {1}/{\lambda}BB^{\mathrm{H}}$ have a cost of about $mnk + nk^2 + k^3/3$. Finally, computing $(\lambda I_n + \hat{A}^{\mathrm{H}}\hat{A})^{-1}$ requires about $k^3/3 + nk^2 + n^2k$.
In both algorithms, the leading computational cost comes from the initial QR factorization and the projection step, while the cubic‑in‑$k$ terms are relatively small when $k \ll \min\{m,n\}$.

We now turn to evaluate the inversion performance of new methods. To this end, we investigate the Frobenius norm of $(\lambda I_{m}+AA^{\mathrm{H}})^{-1}-(\lambda I_{m}+\hat{A}\hat{A}^{\mathrm{H}})^{-1}$ and $(\lambda I_{n}+A^{\mathrm{H}}A)^{-1}-(\lambda I_{m}+\hat{A}^{\mathrm{H}}\hat{A})^{-1}$ . We justify the inversion effectiveness of the new method in the following theorem.


\begin{theorem}\label{xx2}
Let $A\in\mathbb{C}^{m\times n}$ and $\lambda>0$. 
Suppose Algorithms {\rm\ref{Alg.GBRI}} and {\rm\ref{Alg.GBRRI}} are implemented on $A$. Then
\begin{eqnarray*}
  &&\frac{\Vert (\lambda I_{m}+AA^{\mathrm{H}})^{-1}-(\lambda I_{m}+\hat{A}\hat{A}^{\mathrm{H}})^{-1}\Vert_{\mathrm{F}}}{\|(\lambda I_{m}+AA^{\mathrm{H}})^{-1}\|_\mathrm{F}}\leq\frac{2}{\lambda}\Vert A\Vert_{2} \Vert A\Vert_{\mathrm{F}} \sqrt{\epsilon},\\
  &&\frac{\Vert(\lambda I_{n}+A^{\mathrm{H}}A)^{-1}-(\lambda I_{n}+\hat{A}^{\mathrm{H}}\hat{A})^{-1}\Vert_{\mathrm{F}}}{\Vert(\lambda I_{n}+A^{\mathrm{H}}A)^{-1}\|}\leq\frac{1}{\lambda}\Vert A\Vert_{\mathrm{F}}^2\epsilon,
\end{eqnarray*}
hold with probability at least $1-2 \exp(-c\epsilon^2 \sum_{i=1}^{p}\sigma_i^2(A) / (4K^4\sum_{i=1}^{p-k} \sigma_i^2(A)))$, where $\hat{A}=Q_{k}Q_{k}^{\mathrm{H}}A$.
\end{theorem}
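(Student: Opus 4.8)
The plan is to reduce both estimates to the single high‑probability reconstruction bound $\|A-\hat A\|_{\mathrm F}\le\|A\|_{\mathrm F}\sqrt{\epsilon}$ established in Lemma~\ref{vq0} (cf.\ \eqref{Eq.thm32}): once we condition on the event on which that bound holds, everything below is deterministic and the probability in the statement is inherited verbatim. I would first record the only structural fact needed beyond the resolvent identity: writing $P=Q_kQ_k^{\mathrm H}$, we have $\hat A=PA$ with $P^2=P=P^{\mathrm H}$, so $\|\hat A\|_2\le\|P\|_2\|A\|_2=\|A\|_2$.

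For the first inequality, set $M=\lambda I_m+AA^{\mathrm H}$ and $\hat M=\lambda I_m+\hat A\hat A^{\mathrm H}$; both obey $M,\hat M\succeq\lambda I_m$, hence $\|\hat M^{-1}\|_2\le 1/\lambda$. I would use the resolvent identity $M^{-1}-\hat M^{-1}=M^{-1}(\hat M-M)\hat M^{-1}$ and, crucially, assign the Frobenius norm to the \emph{leading} factor: $\|M^{-1}-\hat M^{-1}\|_{\mathrm F}\le\|M^{-1}\|_{\mathrm F}\,\|\hat M-M\|_2\,\|\hat M^{-1}\|_2$. Since $\|M^{-1}\|_{\mathrm F}$ is precisely the denominator, dividing leaves only the spectral‑norm estimate of $\|\hat M-M\|_2=\|\hat A\hat A^{\mathrm H}-AA^{\mathrm H}\|_2$; splitting $\hat A\hat A^{\mathrm H}-AA^{\mathrm H}=\hat A(\hat A-A)^{\mathrm H}+(\hat A-A)A^{\mathrm H}$ and using $\|\hat A\|_2\le\|A\|_2$ and $\|\hat A-A\|_2\le\|\hat A-A\|_{\mathrm F}\le\|A\|_{\mathrm F}\sqrt{\epsilon}$ gives $\|\hat M-M\|_2\le 2\|A\|_2\|A\|_{\mathrm F}\sqrt{\epsilon}$, which yields the stated bound $\tfrac{2}{\lambda}\|A\|_2\|A\|_{\mathrm F}\sqrt{\epsilon}$.

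For the second inequality, set $N=\lambda I_n+A^{\mathrm H}A$ and $\hat N=\lambda I_n+\hat A^{\mathrm H}\hat A$ and repeat the identical resolvent argument with $\|\hat N^{-1}\|_2\le 1/\lambda$. The one new ingredient is the algebraic collapse $A^{\mathrm H}A-\hat A^{\mathrm H}\hat A=A^{\mathrm H}A-A^{\mathrm H}PA=A^{\mathrm H}(I-P)A=G^{\mathrm H}G$, where $G=(I-P)A=A-\hat A$: because this perturbation is an exact Gram matrix, $\|\hat N-N\|_2=\|G\|_2^2\le\|G\|_{\mathrm F}^2=\|A-\hat A\|_{\mathrm F}^2\le\|A\|_{\mathrm F}^2\epsilon$, and the bound $\tfrac{1}{\lambda}\|A\|_{\mathrm F}^2\epsilon$ drops out.

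I expect the only genuinely non‑mechanical point to be the asymmetry between the two cases, and it is worth flagging explicitly in the write‑up. The ``tall'' perturbation $AA^{\mathrm H}-\hat A\hat A^{\mathrm H}$ does not reduce to a square (the cross terms $(I-P)AA^{\mathrm H}P$ do not cancel), so it can only inherit the $O(\sqrt{\epsilon})$ reconstruction error; by contrast $A^{\mathrm H}A-\hat A^{\mathrm H}\hat A$ is literally $G^{\mathrm H}G$ and therefore scales like the \emph{squared} error $O(\epsilon)$. The secondary care point is the norm bookkeeping in $M^{-1}(\hat M-M)\hat M^{-1}$: the Frobenius norm must sit on the $M^{-1}$ (resp.\ $N^{-1}$) factor so it cancels the denominator, and the middle factor must be measured spectrally, since $\|M^{-1}\|_{\mathrm F}$ cannot in general be bounded below by $1/\lambda$.
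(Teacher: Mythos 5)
Your proposal is correct and follows essentially the same route as the paper: the resolvent identity $M^{-1}-\hat M^{-1}=M^{-1}(\hat M-M)\hat M^{-1}$ with the Frobenius norm kept on the leading factor, the bound $\|\hat A\hat A^{\mathrm H}-AA^{\mathrm H}\|\le 2\|A\|_2\|A\|_{\mathrm F}\sqrt{\epsilon}$ via a two-term splitting, and the collapse $A^{\mathrm H}A-\hat A^{\mathrm H}\hat A=G^{\mathrm H}G$ for the $O(\epsilon)$ second bound, all conditioned on the event of Lemma~\ref{vq0}. The only (immaterial) differences are that you measure the perturbation in the spectral rather than Frobenius norm and split $\hat A\hat A^{\mathrm H}-AA^{\mathrm H}$ through $\hat A-A$ instead of through the projector, and you write out the second case that the paper dismisses as ``analogous.''
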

\if1\blind{
\begin{proof}
Let
$
S_{1}=(\lambda I_{m}+AA^{\mathrm{H}})^{-1}$ and $S_{2}=(\lambda I_{m}+\hat{A}\hat{A}^{\mathrm{H}})^{-1}=S_{1}+\Delta S.
$
Thus,
\[
(\lambda I_{m}+AA^{\mathrm{H}})S_{1}=I_m=(\lambda I_{m}+\hat{A}\hat{A}^{\mathrm{H}})S_{2}=(\lambda I_{m}+\hat{A}\hat{A}^{\mathrm{H}})(S_{1}+\Delta S).
\]
Let $\Delta A=\hat{A}\hat{A}^{\mathrm{H}}-AA^{\mathrm{H}}= Q_{k}Q_{k}^{\mathrm{H}}AA^{\mathrm{H}}Q_{k}Q_{k}^{\mathrm{H}}-AA^{\mathrm{H}}$. Then
\begin{eqnarray*}
&&I_{m}=(\lambda I_{m}+\hat{A}\hat{A}^{\mathrm{H}})(S_{1}+\Delta S)=(\lambda I_{m}+AA^{\mathrm{H}}+\Delta A)(S_{1}+\Delta S)\\
&&=(\lambda I_{m}+AA^{\mathrm{H}})S_{1}+(\lambda I_{m}+AA^{\mathrm{H}})\Delta S+\Delta A(S_{1}+\Delta S)\\
&&=I_{m}+(\lambda I_{m}+AA^{\mathrm{H}})\Delta S+\Delta AS_{2},
\end{eqnarray*}
which implies that
\[
\Delta S=-(\lambda I_{m}+AA^{\mathrm{H}})^{-1} \Delta A (\lambda I_{m}+\hat{A}\hat{A}^{\mathrm{H}})^{-1}.
\]
Then
\begin{equation}\label{eq:ds}
\Vert \Delta S\Vert_{\mathrm{F}}\leq\frac{\|(\lambda I_{m}+AA^{\mathrm{H}})^{-1}\|_\mathrm{F} \Vert\Delta A\Vert_{\mathrm{F}}}{\sigma_{m}(\lambda I_{m}+\hat{A}\hat{A}^{\mathrm{H}})}\leq\frac{1}{\lambda} \|(\lambda I_{m}+AA^{\mathrm{H}})^{-1}\|_\mathrm{F} \Vert\Delta A\Vert_{\mathrm{F}}.
\end{equation}
Using Step 6 of Algorithm \ref{alg:qr} and Lemma \ref{vq0}, it follows that
\[
\Vert A-Q_{k}Q_{k}^{\mathrm{H}}A\Vert_{\mathrm{F}}^2=\| A-\hat{A}\|_{\mathrm{F}}^2 \leq \| A\|_{\mathrm{F}}^2\epsilon 
\]
holds with probability at least $1-2 \exp(-c\epsilon^2 \sum_{i=1}^{p}\sigma_i^2(A) / (4K^4\sum_{i=1}^{p-k} \sigma_i^2(A)))$.
Thus,
\begin{eqnarray}\label{51}
\Vert\Delta A\Vert_{\mathrm{F}}&=&\Vert Q_{k}Q_{k}^{\mathrm{H}}AA^{\mathrm{H}}Q_{k}Q_{k}^{\mathrm{H}}-AA^{\mathrm{H}}\Vert_{\mathrm{F}}\nonumber\\
&\le &\Vert Q_{k}Q_{k}^{\mathrm{H}}AA^{\mathrm{H}}(I_m-Q_{k}Q_{k}^{\mathrm{H}})\Vert_{\mathrm{F}}+\Vert (I_m-Q_{k}Q_{k}^{\mathrm{H}})AA^{\mathrm{H}}\Vert_{\mathrm{F}}\nonumber\\
&\leq&2\Vert A\Vert_{2}\Vert A-Q_{k}Q_{k}^{\mathrm{H}}A\Vert_{\mathrm{F}}=2\Vert A\Vert_{2}\Vert A\Vert_{\mathrm{F}}\sqrt{\epsilon}
\end{eqnarray}
holds with probability at least $1-2 \exp(-c\epsilon^2 \sum_{i=1}^{p}\sigma_i^2(A) / (4K^4\sum_{i=1}^{p-k} \sigma_i^2(A)))$.
This, together with \eqref{eq:ds}, indicates that
\[
\frac{\Vert (\lambda I_{m}+AA^{\mathrm{H}})^{-1}-(\lambda I_{m}+\hat{A}\hat{A}^{\mathrm{H}})^{-1}\Vert_{\mathrm{F}}} {\|(\lambda I_{m}+AA^{\mathrm{H}})^{-1}\|_\mathrm{F}}=\frac{\|\Delta S\|_\mathrm{F}} {\|(\lambda I_{m}+AA^{\mathrm{H}})^{-1}\|_\mathrm{F}} \leq\frac{2}{\lambda}\Vert A\Vert_{2} \Vert A\Vert_{\mathrm{F}}\sqrt{\epsilon}
\]
holds with probability at least $1-2 \exp(-c\epsilon^2 \sum_{i=1}^{p}\sigma_i^2(A) / (4K^4\sum_{i=1}^{p-k} \sigma_i^2(A)))$.

Analogously, we can verify that
\[
\frac{\Vert (\lambda I_{n}+A^{\mathrm{H}}A)^{-1}-(\lambda I_{n}+\hat{A}^{\mathrm{H}}\hat{A})^{-1}\Vert_{\mathrm{F}}} {\Vert(\lambda I_{n}+A^{\mathrm{H}}A)^{-1}\|}\leq\frac{1}{\lambda}\Vert A\Vert_{\mathrm{F}}^2\epsilon
\]
holds with probability at least $1-2 \exp(-c\epsilon^2 \sum_{i=1}^{p}\sigma_i^2(A) / (4K^4\sum_{i=1}^{p-k} \sigma_i^2(A)))$.
\end{proof} 
} \fi
\if0\blind{
\bigskip
}\fi

Theorem \ref{xx2} shows that, during the random re-normalization of Algorithm \ref{alg:qr}, if there exist the smallest $1\le\ell\le f_j$ such that $|T_{j}(\ell,\ell)|\leq \|A\|_{\mathrm{F}}\sqrt{\epsilon}/\sqrt{2}$ for some $j\ge 1$, the matrix inverses $(\lambda I_{m}+AA^{\mathrm{H}})^{-1}$ and $(\lambda I_{n}+A^{\mathrm{H}}A)^{-1}$ can be efficiently approximated by the matrix inverses $(\lambda I_{m}+\hat{A}\hat{A}^{\mathrm{H}})^{-1}$ and $(\lambda I_{m}+\hat{A}^{\mathrm{H}}\hat{A})^{-1}$, respectively, with high probability.




\section{Numerical examples}\label{Sec.simu}
We demonstrate the proposed  random re-normalization methods via a series of simulated examples. We evaluate GRSVD in terms of computational accuracy and efficiency by comparing with the economy-sized SVD (eSVD) by MATLAB command \texttt{svd(A,'econ')},  RSVD in \cite{9}, ARRF in \cite{9}, and randQB-EI \cite{yu2018}. All the tests were carried out in {\tt MATLAB 2021a} running on a
workstation with a Intel Xeon CPU Gold 6134 at 3.20 GHz and 32 GB of RAM.
\begin{example}\label{exam}
We run GRSVD on a series of simulated matrices $A\in \mathbb{C}^{10000\times 8000}$.
The matrices are generated by
\[
A=U_A\Sigma_A V_A^{\mathrm{H}},
\]
where both $U_A\in\mathbb{C}^{10000\times 8000}$ and $V_A\in\mathbb{C}^{8000\times 8000}$ has orthonormal columns and $\Sigma_A \in\mathbb{C}^{8000\times 8000}$ is a diagonal matrix. The diagonal elements of matrix $\Sigma_A$ are generated by {\tt MATLAB 2021a}'s command 
\[
\texttt{diag(sort([rand(r,1);rand(8000-r,1)*$\epsilon$],'descend'))}.
\]
The parameter $r$ (ranging from 1000 to 4000) represents the index at which the singular values begin to decay sharply. We consider several nearly low-rank matrices with $\epsilon=10^{-8}$.
\end{example}

Let $\{\hat{\sigma_i}\}_{i=1}^{r_{\epsilon}}$ be the set of $r_{\epsilon}$ singular values obtained from the corresponding decompositions. The accuracy of each method is assessed by the average relative errors of matrix reconstruction and singular values:
\[E_{MSE} = \frac{\|A-\hat{U}\hat{\Sigma}\hat{V}^{\mathrm{H}}\|_\mathrm{F}}{\|A\|_\mathrm{F}},\;\;E_{\Sigma} = \max\limits_{1\leq i\leq r_\epsilon}\frac{|\sigma_{i}^{2}(A)-\hat{\sigma}_{i}^{2}|}{|\sigma_{i}^{2}(A)|}\]
and the orthogonality of singular vector matrices:
\[E_{U} = \frac{\|\hat{U}^{\mathrm{H}}\hat{U}-I\|_{\mathrm{F}}}{\sqrt{r_\epsilon}},\;\;E_{V} = \frac{\|\hat{V}^{\mathrm{H}}\hat{V}-I\|_{\mathrm{F}}}{\sqrt{r_\epsilon}}.\]
The average computational time (in seconds) is also reported as a measure of the efficiency. Each experiment is repeated 100 times, and the results are summarized in Table \ref{Tab.example-nlr} and Figure \ref{Fig:nlrt}.

\begin{table}
  \centering\scriptsize
  \renewcommand{\arraystretch}{1.2} 
  \caption{Error comparisons of SVD for nearly low-rank matrices}\label{Tab.example-nlr}
  \begin{tabular}{ccccc}
    \toprule[1.2pt]
     $r/r_\epsilon$ & Method  &  $E_{MSE}$ & $E_{\Sigma}$ & $\max\{E_{U},E_{V}\}$\\
\midrule											
 		\multirow{5}*{1000} 	&	GRSVD	&	2.76E-07	&	5.72E-10	&	2.45E-15	\\
			&	RSVD	&	3.27E-08	&	9.64E-12	&	3.30E-15	\\
			&	ARRF	&	1.44E-07	&	6.13E-10	&	2.18E-15	\\
			&	randQB-EI	&	3.04E-08	&	7.91E-12	&	2.40E-15	\\
			&	eSVD	&	2.60E-15	&	4.94E-15	&	3.76E-15	\\
													
		\multirow{5}*{2000} 	&	GRSVD	&	6.11E-07	&	1.26E-09	&	3.40E-15	\\
			&	RSVD	&	2.08E-08	&	1.45E-11	&	3.73E-15	\\
			&	ARRF	&	1.02E-07	&	1.25E-09	&	2.63E-15	\\
			&	randQB-EI	&	2.55E-07	&	1.20E-08	&	2.59E-15	\\
			&	eSVD	&	3.14E-15	&	4.84E-15	&	3.55E-15	\\
													
		\multirow{5}*{3000} 	&	GRSVD	&	2.82E-07	&	1.87E-09	&	9.28E-15	\\
			&	RSVD	&	1.42E-08	&	3.33E-12	&	4.34E-15	\\
			&	ARRF	&	1.10E-07	&	7.80E-10	&	2.90E-15	\\
			&	randQB-EI	&	2.14E-08	&	3.50E-11	&	2.94E-15	\\
			&	eSVD	&	3.00E-15	&	4.65E-15	&	3.44E-15	\\
													
		\multirow{5}*{4000} 	&	GRSVD	&	5.06E-07	&	1.94E-09	&	4.44E-15	\\
			&	RSVD	&	1.03E-08	&	2.73E-14	&	4.48E-15	\\
			&	ARRF	&	1.02E-07	&	5.67E-09	&	3.24E-15	\\
			&	randQB-EI	&	4.17E-06	&	2.08E-09	&	3.31E-15	\\
			&	eSVD	&	3.42E-15	&	3.45E-14	&	3.51E-15	\\
    \bottomrule[1.2pt]
    \end{tabular}
\end{table}

\begin{figure}[!ht]
    \centering
        \includegraphics[width=0.47\textwidth]{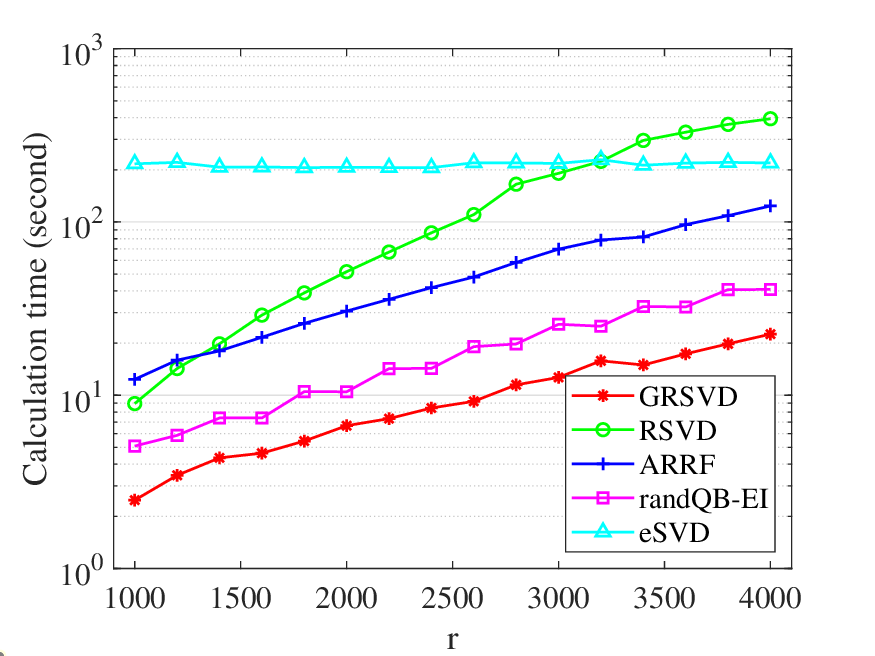}
    \caption{Calculation time of SVD for nearly low-rank matrices.}
	\label{Fig:nlrt}
\end{figure}

In our experiments, all methods (GRSVD, RSVD, ARRF, randQB-EI, and eSVD) yield the same effective rank $r_\epsilon=r$.
It can be seen from Table \ref{Tab.example-nlr} and Figure \ref{Fig:nlrt}, that all methods produce $\hat{U}$ and $\hat{V}$ with good orthogonality. GRSVD maintains stable performance with relatively consistent accuracy. As the matrix rank increases from 1000 to 4000, GRSVD significantly reduces computation time compared to RSVD, ARRF, randQB-EI, and eSVD. In particular, when the rank reaches 4000, its running speed is more than 10 times faster than that of RSVD and eSVD, and about twice as fast as randQB-EI, demonstrating a remarkable computational advantage.

\begin{example}
we run GRI on a series of simulated matrices $A$, where $A$ is generated by the same setup as Example 1 with $r$ ranging from 200 to 2600. The types of matrix we considered are the same as Example 1.
\end{example}

We evaluate the built-in function matrix inverse (BII) by MATLAB command \texttt{inv} with GRI, ARRF in \cite{9}, and randQB-EI in \cite{yu2018} in terms of computational accuracy and efficiency. For ARRF and randQB-EI, the inverse matrix is obtained indirectly: the matrix is first decomposed into its SVD, the inverse of the singular value matrix is then computed, and finally the inverse matrix is reconstructed through matrix multiplication.
We measure their accuracy by the relative errors of matrix inverses:
\[
E_{U} = \frac{\|(I_{m}+AA^{\mathrm{H}})^{-1}-(I_{m}+\tilde{A}\tilde{A}^{\mathrm{H}})^{-1}\|_{\mathrm{F}}}{\|A\|_{\mathrm{F}}},
\]

\[E_{V} = \frac{\|(I_{n}+A^{\mathrm{H}}A)^{-1}-(I_{n}+\tilde{A}^{\mathrm{H}}\tilde{A})^{-1}\|_{\mathrm{F}}}{\|A\|_{\mathrm{F}}},
\]
where $(I_{m}+AA^{\mathrm{H}})^{-1}$ is obtained by BII and $(I_{m}+\tilde{A}\tilde{A}^{\mathrm{H}})^{-1}$ is obtained by GRI, ARRF and randQB-EI.
The experiments are repeated over 100 times and the mean $E_{U}$ and $E_{V}$ of those methods are reported in Table \ref{Tab.example-ninv} with algorithm run time shown in Figure \ref{ninvt}.

\begin{table}[!ht]
  \centering\scriptsize
  \renewcommand{\arraystretch}{1.2} 
  \caption{Error comparison of matrix inverse}
  \begin{tabular}{cccccccc}
    \toprule[1.2pt]
   \multirow{2}*{$r$} & \multicolumn{3}{c}{Error $E_{U}$}  &  \multicolumn{3}{c}{Error $E_{V}$}  \\
&GRI&ARRF &randQB-EI&GRI&ARRF &randQB-EI\\
\midrule
	400	&	9.65E-07	&4.71E-08&6.67E-08	&	9.29E-10	&9.28E-15&7.13E-15	\\
	800	&	5.53E-07	&1.03E-08&1.02E-08	&	3.15E-11	&1.93E-14&6.83E-15	\\
	1200	&	9.44E-07	&4.51E-08&8.57E-08	&	2.39E-10	&1.00E-14&3.03E-15	\\
	1600	&	7.07E-07	&1.74E-08&5.03E-08	&	7.05E-12	&7.11E-15&6.12E-14	\\
	2000	&	3.47E-07	&4.18E-08&1.22E-08	&	1.45E-11	&1.06E-14&2.22E-15	\\
	2400	&	2.99E-07	&3.78E-08&3.78E-08	&	1.22E-10	&1.05E-14&1.03E-14	\\
    \bottomrule[1.2pt]
    \end{tabular}
\label{Tab.example-ninv}
\end{table}

\begin{figure}[!ht]
    \centering
    \subfloat[$(I_m+AA^{\mathrm{H}})^{-1}$]{
        \includegraphics[width=2.4in]{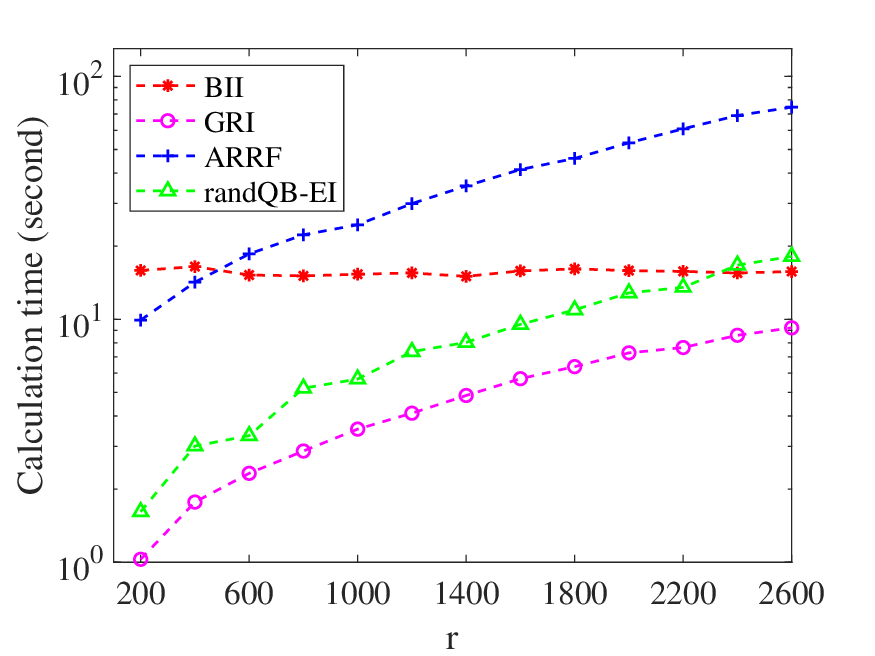}
    }
    \subfloat[$(I_n+A^{\mathrm{H}}A)^{-1}$]{
	\includegraphics[width=2.4in]{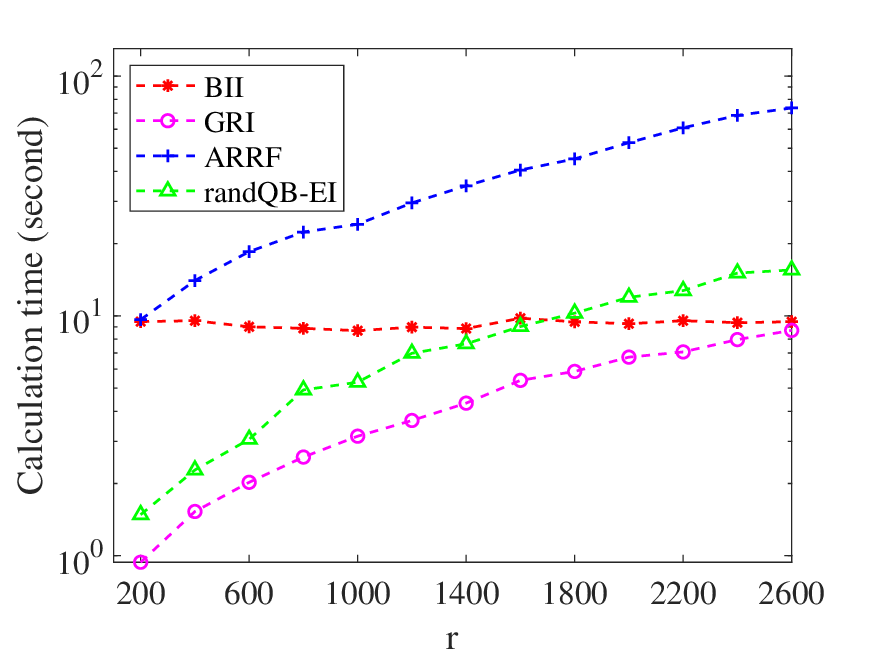}
    }
    \caption{Calculation time of matrix inverse.}
    \label{ninvt}
\end{figure}


For nearly low-rank matrices, it is seen from Table \ref{Tab.example-ninv} that GRI demonstrates acceptable accuracy in computing the inverses of the two types of matrices. While its accuracy is not the highest among all methods, it remains sufficient for practical purposes. In terms of calculation time, GRI takes less calculation time than BII, ARRF, and randQB-EI  for all case of $r$, as also observed from Figure \ref{ninvt}. In contrast, ARRF requires even more time than BII because it computes the matrix inverse column by column, which is relatively time-consuming. Therefore, GRI offers a fast and sufficiently accurate approach for inverting nearly low-rank matrices.

  
\begin{example}[Echocardiography denoising, \cite{xu2023,xu2025}]
This example aims to explore the denoising of echocardiography based on SVD, with the core objective of effectively removing noise while preserving the necessary energy proportion and critical image information. The experimental data are derived from the EchoNet-Dynamic database \cite{nature}, which comprises $10,030$ apical four-chamber echocardiography videos \footnote{\url{https://echonet.github.io/dynamic/}} obtained from imaging results collected during routine clinical care at Stanford Hospital between 2016 and 2018. Each video is cropped and masked to eliminate text and other redundant information outside the scanned sector, followed by downsampling to a standardized resolution of $112 \times 112$ pixels using tricubic interpolation. 
\end{example}

The echocardiography data, with dimensions $m \times n \times T$, are reshaped into a concatenated matrix $A$ of size $(m \times n) \times T$. Based on matrix $A$, five SVD methods are employed for denoising, as illustrated in Figure \ref{lct}:

\begin{figure}[!ht]
  \centering
  \includegraphics[width=1\textwidth]{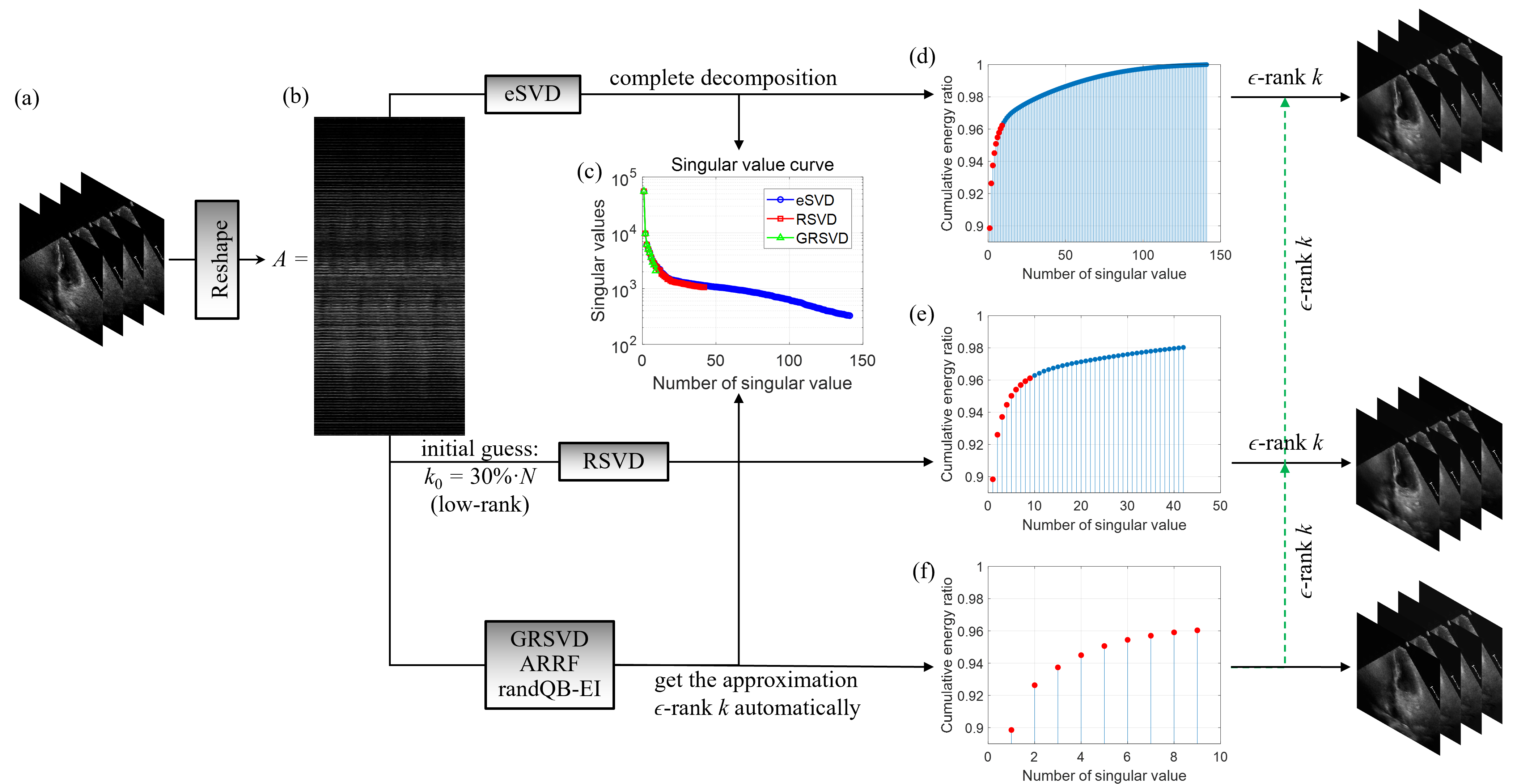}
  \caption{Schematic of $\epsilon$-rank denoising using eSVD, randomized SVD, GRSVD, Algorithm 4.2 in \cite{9} and randQB-EI. (a) A 3-D stack of echocardiography. (b) A spatiotemporal representation (Casorati matrix) where all pixels at one time point are arranged in one column. (c) Singular value curves of eSVD, RSVD and GRSVD. (d) Stem image of the cumulative energy ratio of eSVD. (e) Stem image of the cumulative energy ratio of RSVD. (f) Stem image of the cumulative energy ratio of GRSVD.}\label{lct}
\end{figure}

\begin{itemize}
\item The concatenated matrix $A$ is completely decomposed using eSVD. Calculate the cumulative energy ratios of all singular values. Then determine the $\epsilon$-rank $k$ according to Definition \ref{Ass1}. After that, obtain the denoised results through matrix multiplication.
\item Use RSVD to partially decompose the concatenated matrix $A$.
    The $\epsilon$-rank $k_0$ is initially guessed by an empirical selection for low-rank (e.g., first 30\% of the rank) \cite{xu2023,xu2025}, and then the ideal threshold can be determined as shown in Figure \ref{lct}(e).
    Then, determine the $\epsilon$-rank $k$ according to Definition \ref{Ass1}, where the denominator in Definition \ref{Ass1} can be obtained by the square of the Frobenius norm of the concatenated matrix $A$. Finally, obtain the denoised video data through matrix multiplication.
\item Using GRSVD, ARRF, and randQB-EI to perform a partial singular value decomposition of the concatenated matrix $A$. Since different algorithms produce different truncation parameters $k$, all three methods use the $\epsilon$-rank $k$ calculated by GRSVD as the number of singular values to be retained, and obtain the denoising result by matrix multiplication. To ensure consistency in comparisons, the $\epsilon$-rank for eSVD and RSVD are also based on the $\epsilon$-rank $k$ obtained from GRSVD. Such a setup guarantees that all methods are evaluated under the same truncation parameter, thereby ensuring a fair and meaningful comparison of the subsequent results.
\end{itemize}

\begin{table}
  \centering\scriptsize
  \setlength{\tabcolsep}{4pt} 
  \renewcommand{\arraystretch}{1.2} 
  \caption{Results of SVD related indicators of echocardiography}\label{Table:ex}
  \begin{tabular}{ccccccccc}
    \toprule[1.2pt]
    Dimension & $\epsilon$	&	Methods	&MSE	&	PSNR(dB)	&	EPI	&	Time(s)	&	$k$	&	ER(\%)	\\
    \midrule
\multirow{5}{*}{$112\times112\times141$}	&	\multirow{5}{*}{0.05}	&	GRSVD	&	0.1985	&	55.1656	&	0.9386	&	\textbf{	0.0049	}	&	\multirow{5}{*}{	9	}	&	\multirow{5}{*}{95.84}	\\
	&		&	RSVD	&	0.1965	&	55.1970	&	0.9419	&		0.0273		&				&		\\
	&		&	eSVD	&	0.1937	&	55.2600	&	0.9427	&		0.0295		&				&		\\
	&		&	ARRF	&	0.1954	&	55.2221	&	0.9420	&		0.1037		&				&		\\
	&		&	randQB-EI	&	0.2283	&	54.5508	&	0.9261	&		0.0104		&				&		\\
\midrule																					
\multirow{5}{*}{$112\times112\times234$}	&	\multirow{5}{*}{0.03}	&	GRSVD	&	0.1654	&	55.9551	&	0.9447	&	\textbf{	0.0131	}	&	\multirow{5}{*}{	13	}	&	\multirow{5}{*}{97.43}	\\
	&		&	RSVD	&	0.1603	&	56.0819	&	0.9483	&		0.0513		&				&		\\
	&		&	eSVD	&	0.1592	&	56.1114	&	0.9487	&		0.0670		&				&		\\
	&		&	ARRF	&	0.1616	&	56.0475	&	0.9478	&		0.1081		&				&		\\
	&		&	randQB-EI	&	0.1781	&	55.6288	&	0.9382	&		0.0263		&				&		\\
\midrule																					
\multirow{5}{*}{$112\times112\times154$}	&	\multirow{5}{*}{0.03}	&	GRSVD	&	0.1518	&	56.3355	&	0.9533	&	\textbf{	0.0090	}	&	\multirow{5}{*}{	18	}	&	\multirow{5}{*}{97.53}	\\
	&		&	RSVD	&	0.1488	&	56.4073	&	0.9552	&		0.0330		&				&		\\
	&		&	eSVD	&	0.1458	&	56.4945	&	0.9560	&		0.0334		&				&		\\
	&		&	ARRF	&	0.1482	&	56.4231	&	0.9552	&		0.1012		&				&		\\
	&		&	randQB-EI	&	0.1713	&	55.8036	&	0.9438	&		0.0177		&				&		\\
\midrule																					
\multirow{5}{*}{$112\times112\times636$}	&	\multirow{5}{*}{0.09}	&	GRSVD	&	0.2919	&	53.5075	&	0.8852	&	\textbf{	0.0374	}	&	\multirow{5}{*}{	17	}	&	\multirow{5}{*}{92.88}	\\
	&		&	RSVD	&	0.2730	&	53.7696	&	0.8969	&		0.2756		&				&		\\
	&		&	eSVD	&	0.2718	&	53.7879	&	0.8974	&		0.4195		&				&		\\
	&		&	ARRF	&	0.2948	&	53.4370	&	0.8850	&		0.1320		&				&		\\
	&		&	randQB-EI	&	0.3517	&	52.6735	&	0.8565	&		0.0696		&				&		\\
\midrule																					
\multirow{5}{*}{$112\times112\times864$}	&	\multirow{5}{*}{0.05}	&	GRSVD	&	0.2127	&	54.8640	&	0.9311	&	\textbf{	0.0321	}	&	\multirow{5}{*}{	12	}	&	\multirow{5}{*}{96.31}	\\
	&		&	RSVD	&	0.2094	&	54.9201	&	0.9296	&		0.4702		&				&		\\
	&		&	eSVD	&	0.2091	&	54.9283	&	0.9296	&		0.6294		&				&		\\
	&		&	ARRF	&	0.2151	&	54.8056	&	0.9272	&		0.1468		&				&		\\
	&		&	randQB-EI	&	0.2429	&	54.2856	&	0.9150	&		0.0692		&				&		\\
\midrule																					
\multirow{5}{*}{$112\times112\times614$}	&	\multirow{5}{*}{0.05}	&	GRSVD	&	0.2489	&	54.2778	&	0.9186	&	\textbf{	0.0363	}	&	\multirow{5}{*}{	16	}	&	\multirow{5}{*}{95.74}	\\
	&		&	RSVD	&	0.2283	&	54.5458	&	0.9348	&		0.2818		&				&		\\
	&		&	eSVD	&	0.2275	&	54.5602	&	0.9351	&		0.4087		&				&		\\
	&		&	ARRF	&	0.2373	&	54.3789	&	0.9309	&		0.1369		&				&		\\
	&		&	randQB-EI	&	0.2735	&	53.7689	&	0.9126	&		0.0706		&				&		\\
    \bottomrule[1.2pt]
  \end{tabular}

\end{table}

To validate the effectiveness of the above methods, six specific echocardiography videos are selected for experimentation with varied $\epsilon$.
Let $A$ denote the original concatenated matrix. We obtain the $\epsilon$-rank $k$, computation time (in seconds), energy ratio (ER) defined in Definition \ref{Ass1}, and the three standard criteria for evaluating the $\epsilon$-rank approximation matrix $\hat{A}$ for all methods, as follows:
\begin{itemize}
\item[(i)] The mean square error (MSE) \[\text{MSE} = \frac{\|A-\hat{A}\|_{\mathrm{F}}}{\|A\|_{\mathrm{F}}}.\]
\item[(ii)] The peak signal-to-noise ratio value (PSNR) \[\text{PSNR} = 10 \cdot \log_{10}\left(\frac{255^2}{\text{MSE}}\right).\]
\item[(iii)] The edge preservation index (EPI) 
\[\text{EPI} = \frac{\sum_{i,j} (\Delta A - \overline{\Delta A}) \cdot (\widehat{\Delta A} - \overline{\widehat{\Delta A}}) }{\sqrt{\sum_{i,j} (\Delta A - \overline{\Delta A})^2} \cdot \sqrt{\sum_{i,j} (\widehat{\Delta A} - \overline{\widehat{\Delta A}})^2}},\]
where $\bar{A}(i,j)$ and $\bar{\hat{A}}(i,j)$ are mean values in the region of interest of $A(i,j)$ and $\hat{A}(i,j)$, respectively.
$\Delta A(i, j)$ is a highpass filtered version of $A(i, j)$, obtained with a $3\times 3$-pixel standard approximation of the Laplacian operator.
\end{itemize}

The experimental results are detailed in Table \ref{Table:ex} and Figure \ref{Fig:ex}. We have the following conclusions:

\begin{figure}[!ht]
  \centering
  \setlength{\tabcolsep}{2pt} 
  \begin{tabular}{llllll}
     (a) & (b) & (c) & (d) & (e) & (f)\\
    \includegraphics[width=0.15\textwidth]{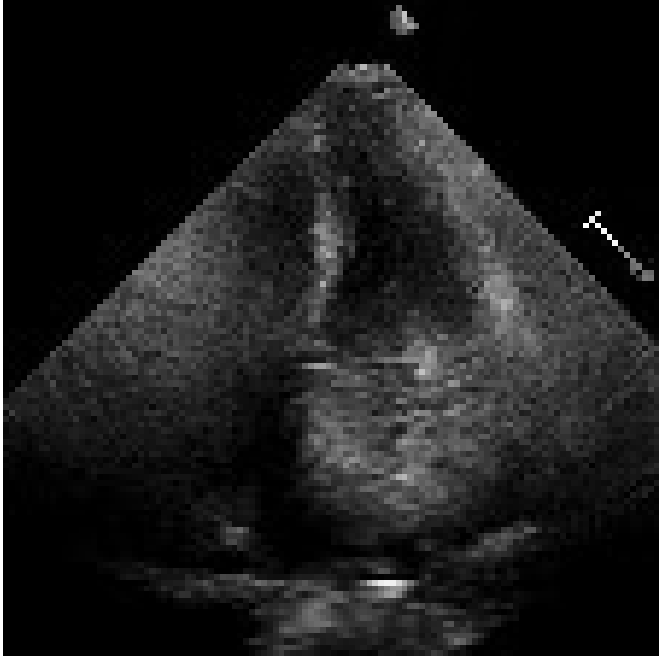} &
    \includegraphics[width=0.15\textwidth]{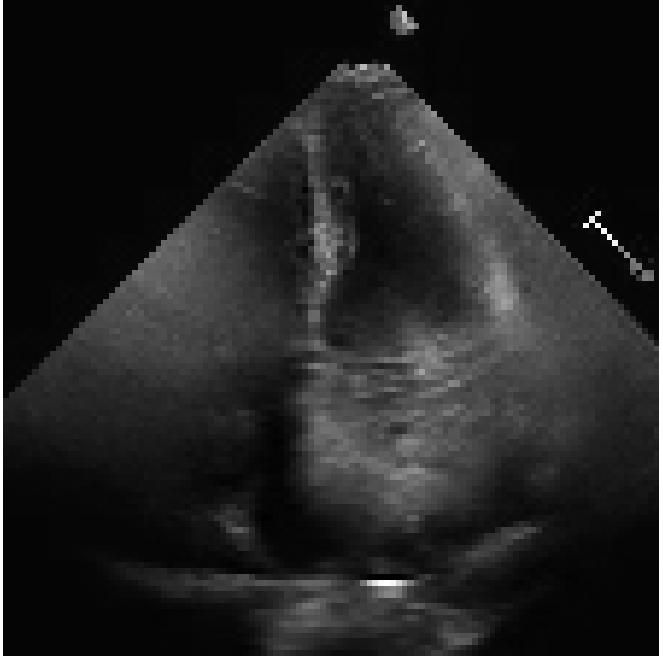} &
    \includegraphics[width=0.15\textwidth]{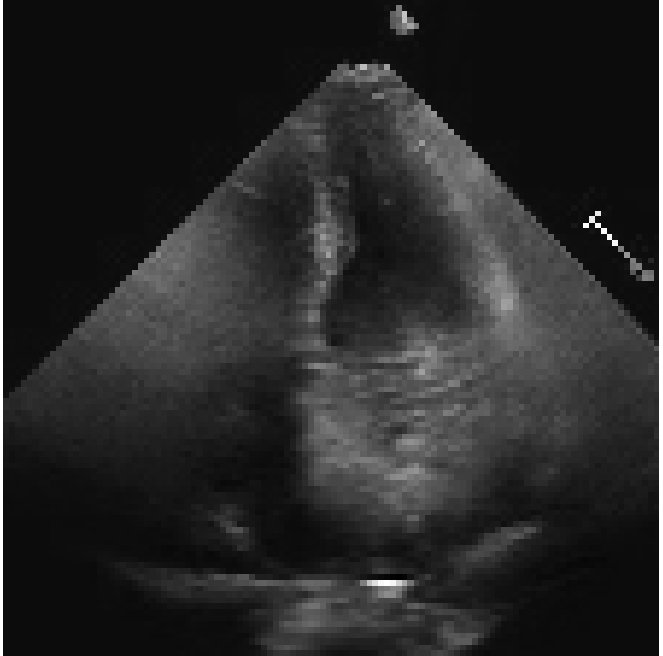} &
    \includegraphics[width=0.15\textwidth]{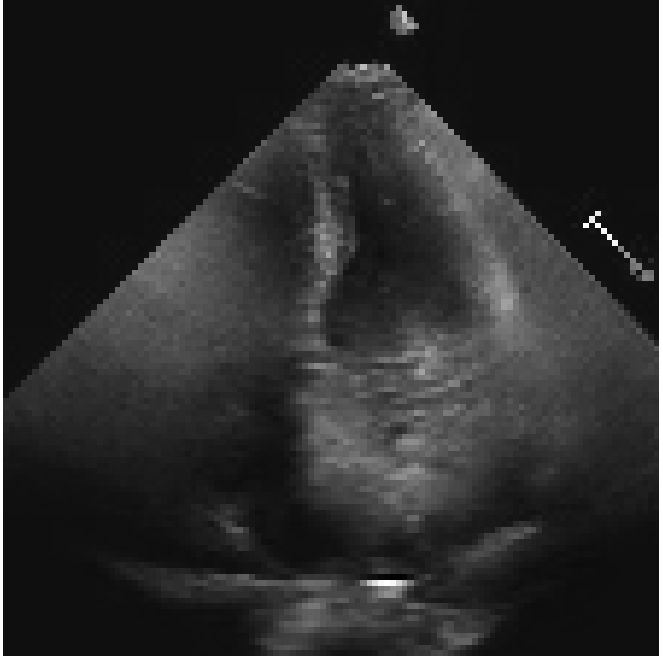} &
    \includegraphics[width=0.15\textwidth]{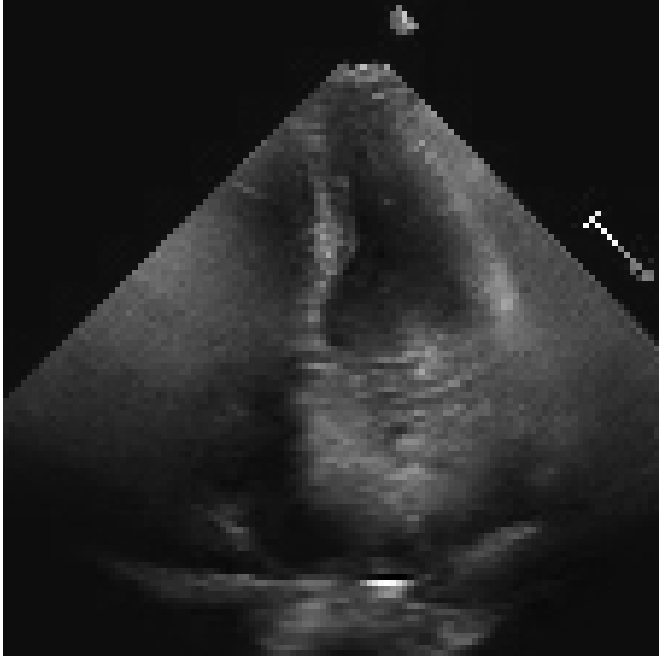} &
    \includegraphics[width=0.15\textwidth]{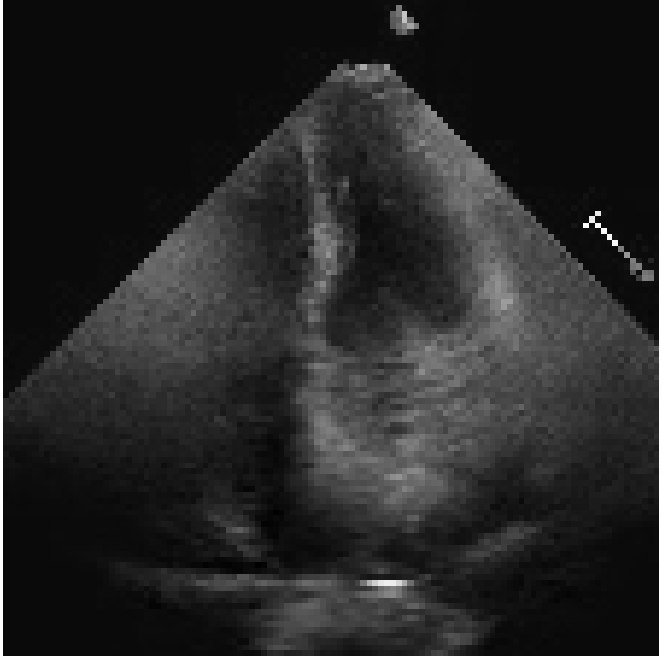}\\
      \includegraphics[width=0.15\textwidth]{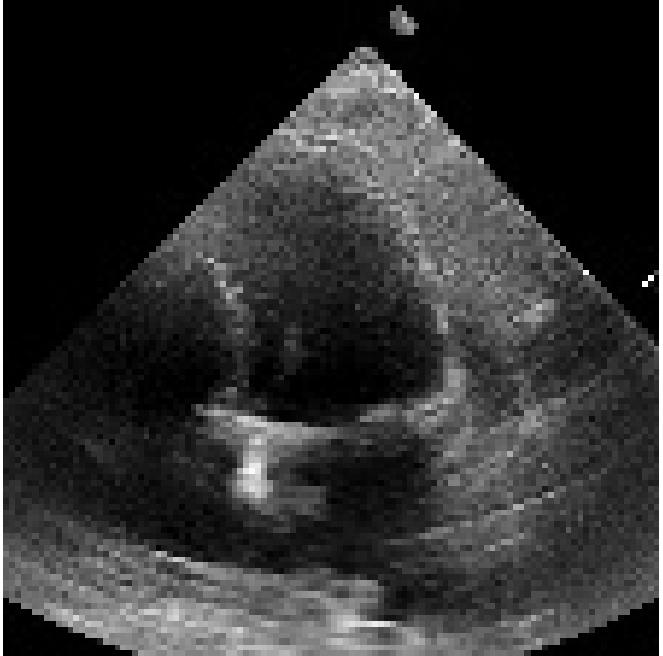} &
      \includegraphics[width=0.15\textwidth]{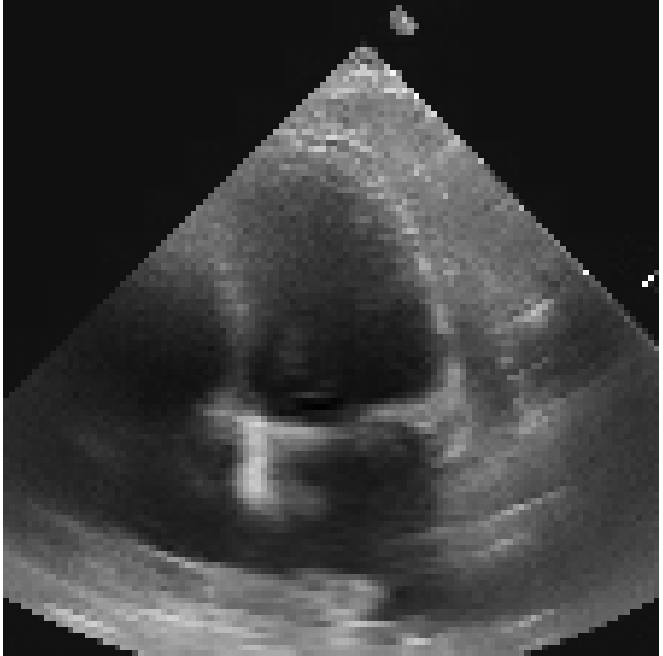} &
      \includegraphics[width=0.15\textwidth]{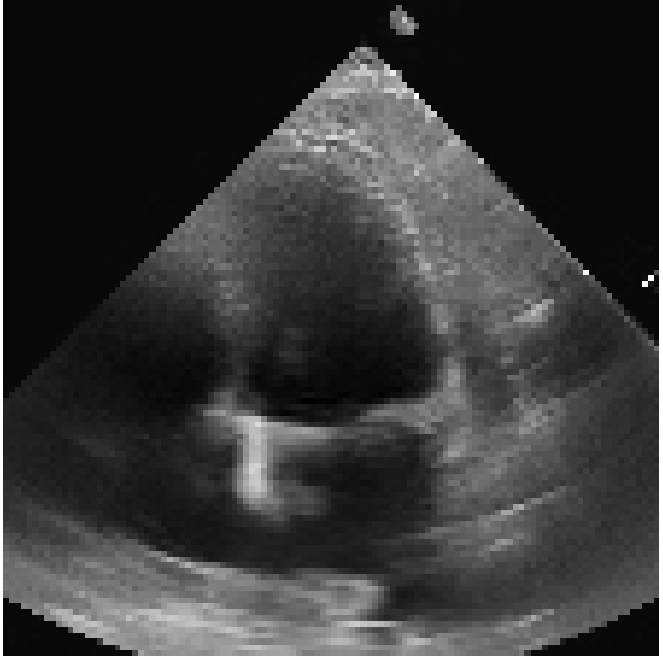} &
      \includegraphics[width=0.15\textwidth]{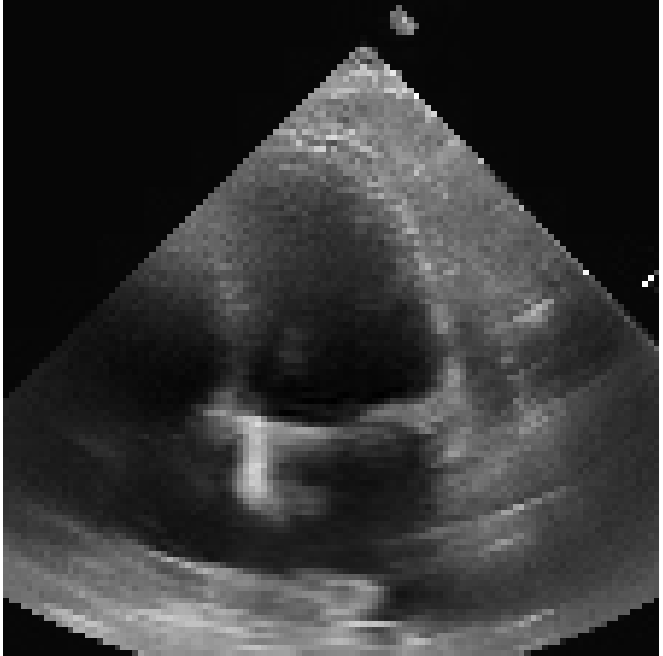} &
      \includegraphics[width=0.15\textwidth]{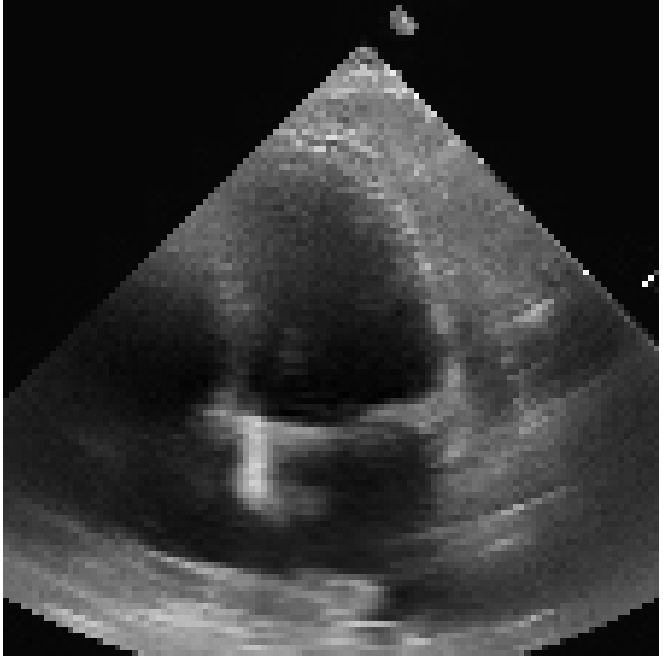} &
      \includegraphics[width=0.15\textwidth]{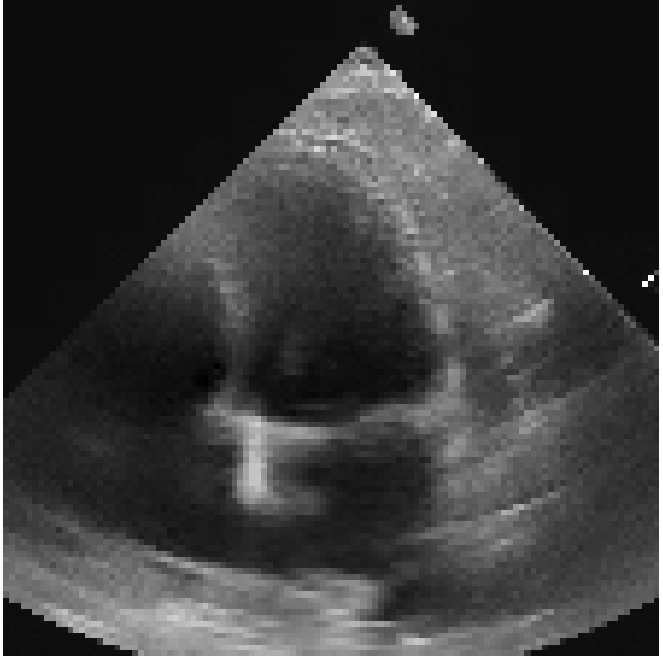}\\
    \includegraphics[width=0.15\textwidth]{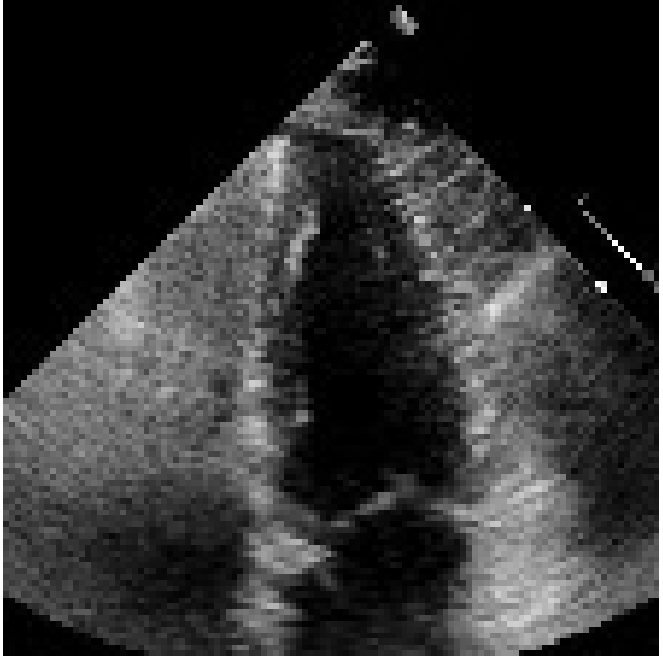} &
    \includegraphics[width=0.15\textwidth]{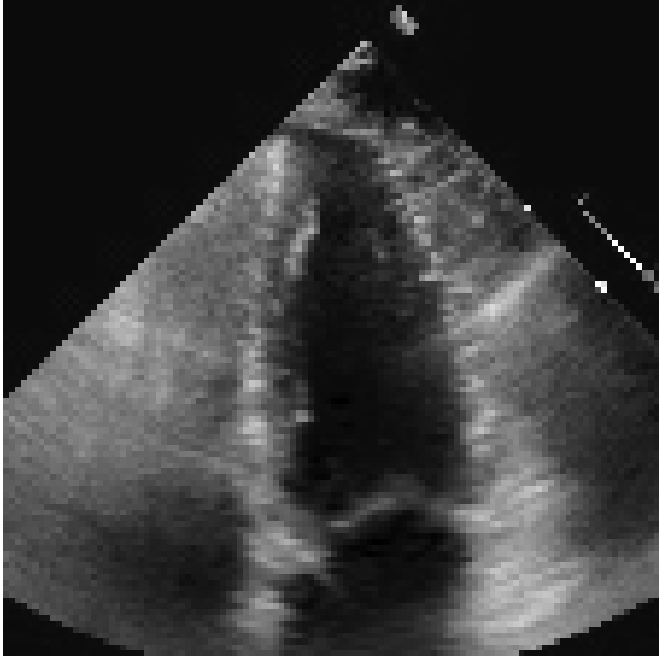} &
    \includegraphics[width=0.15\textwidth]{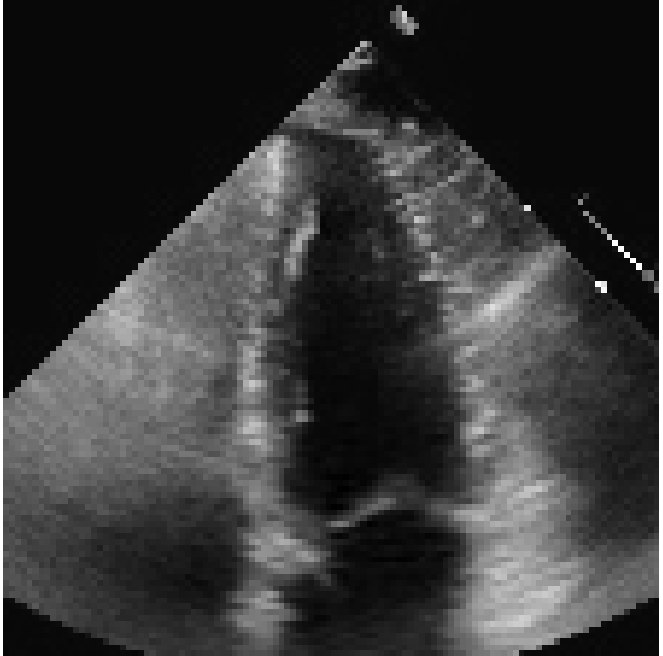} &
    \includegraphics[width=0.15\textwidth]{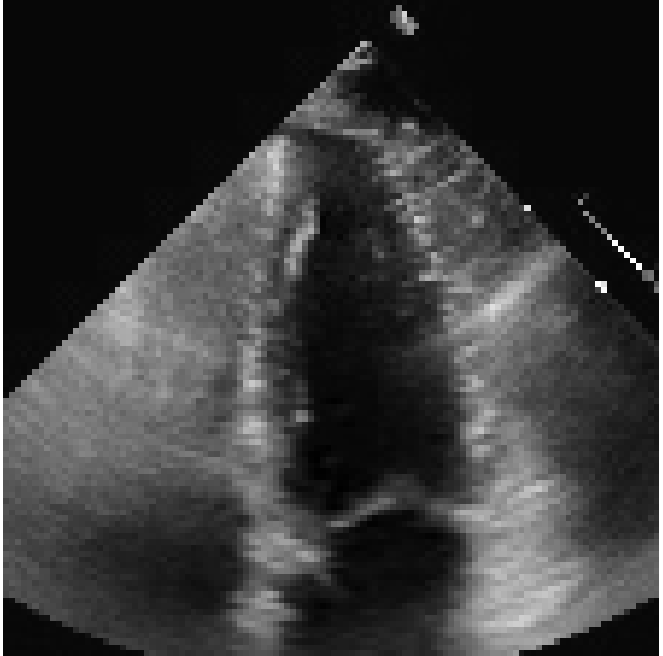} &
    \includegraphics[width=0.15\textwidth]{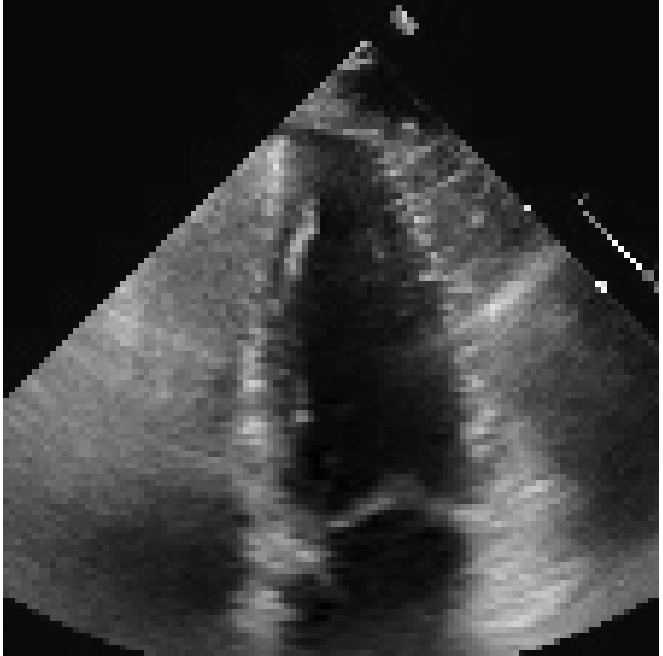} &
    \includegraphics[width=0.15\textwidth]{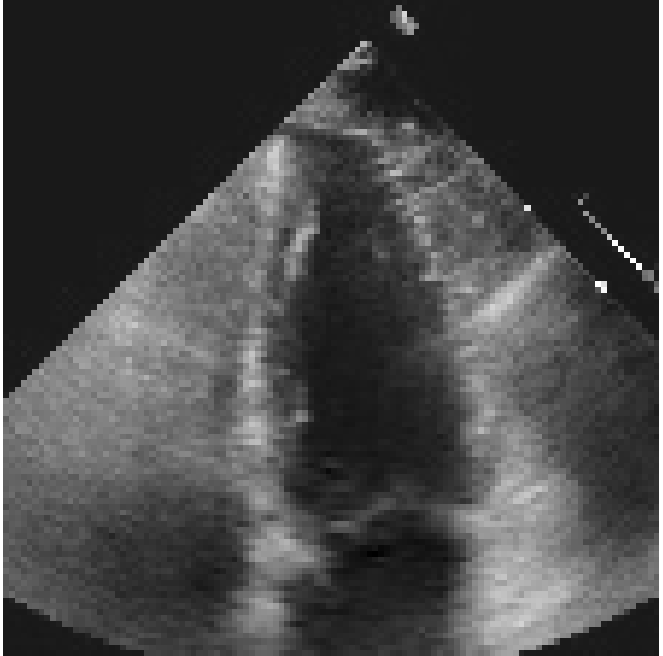}\\
    \includegraphics[width=0.15\textwidth]{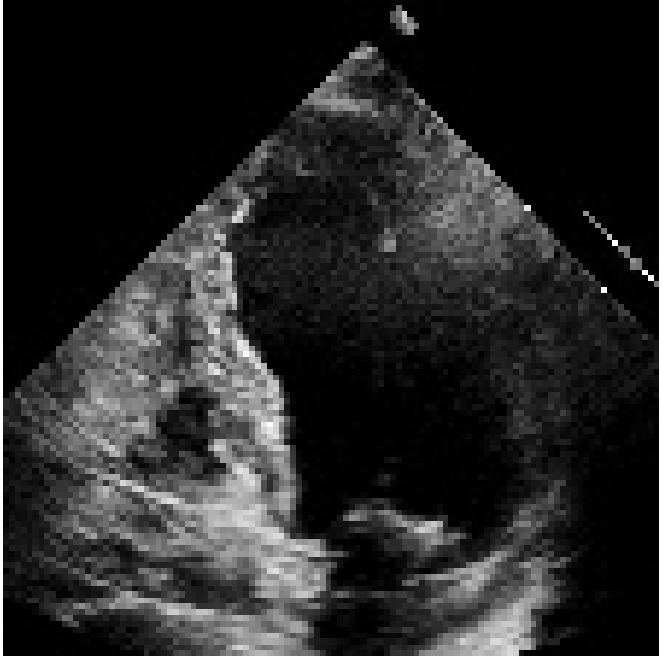} &
    \includegraphics[width=0.15\textwidth]{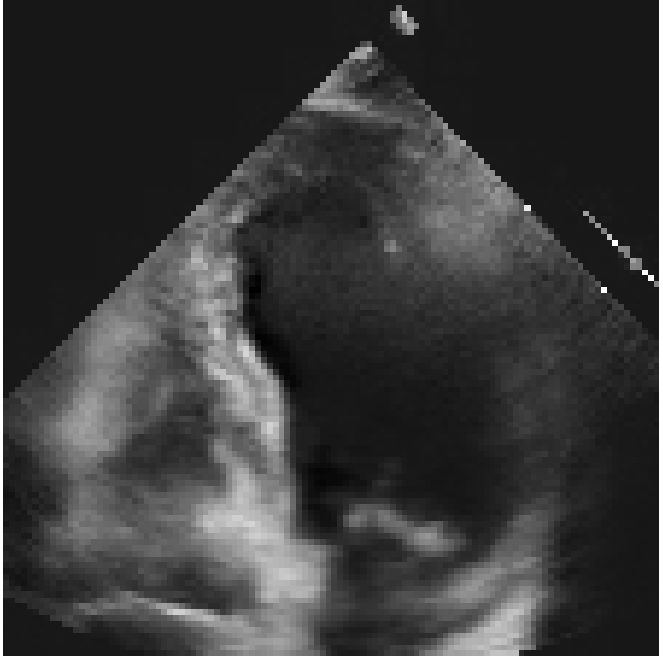} &
    \includegraphics[width=0.15\textwidth]{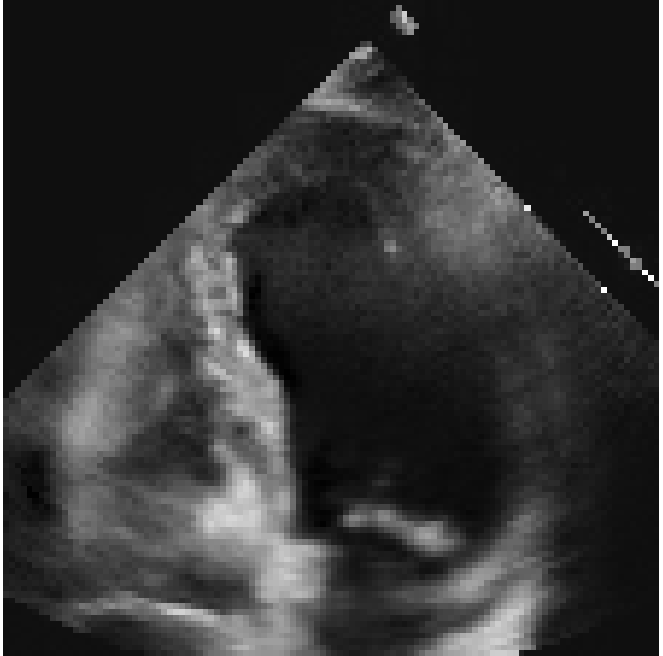} &
    \includegraphics[width=0.15\textwidth]{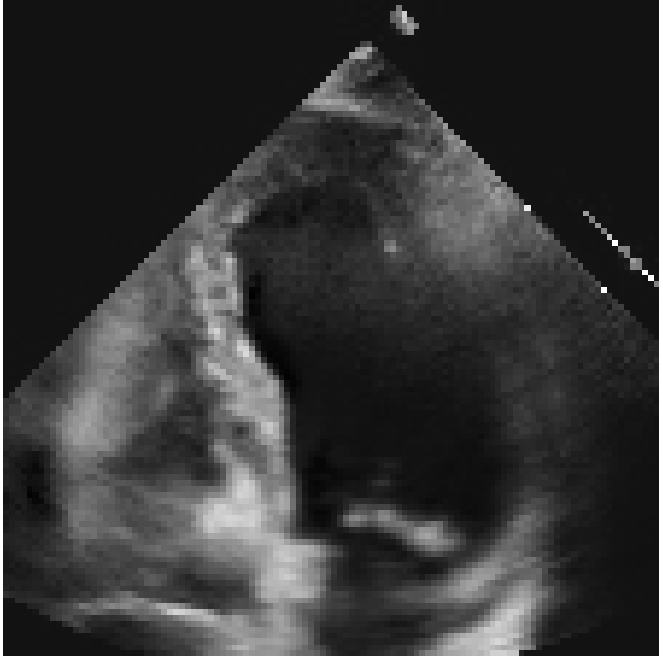} &
    \includegraphics[width=0.15\textwidth]{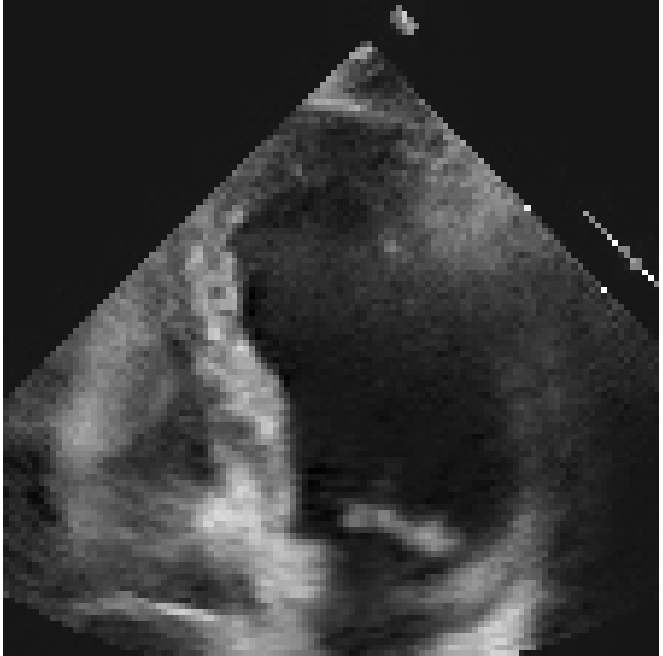} &
    \includegraphics[width=0.15\textwidth]{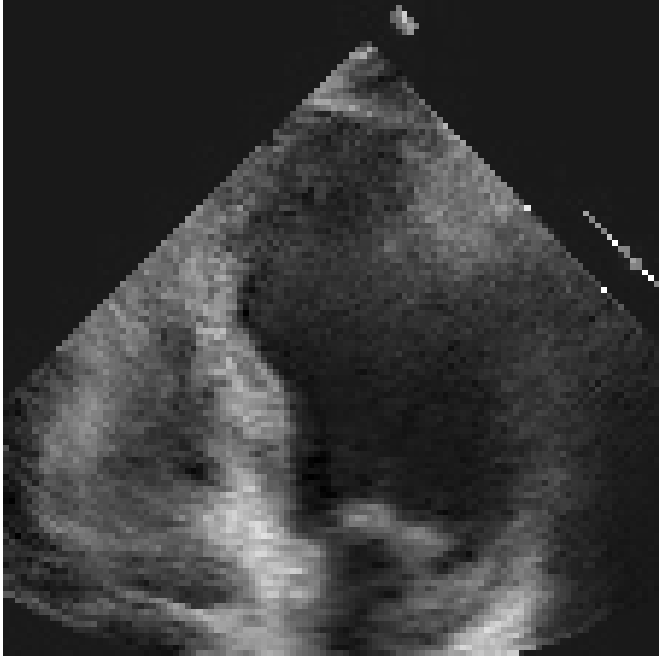}\\
    \includegraphics[width=0.15\textwidth]{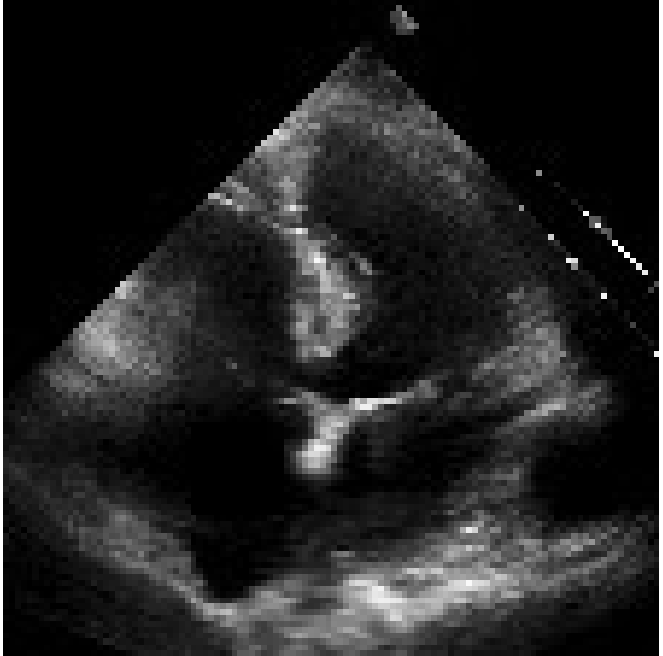} &
    \includegraphics[width=0.15\textwidth]{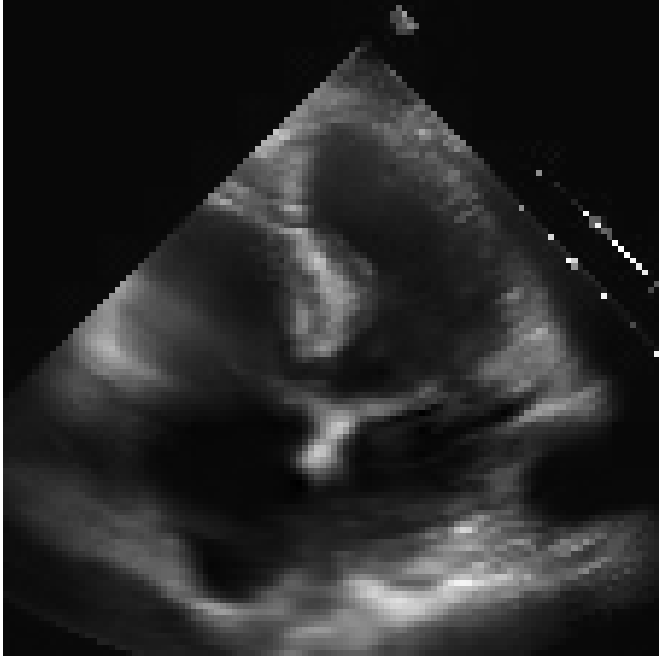} &
    \includegraphics[width=0.15\textwidth]{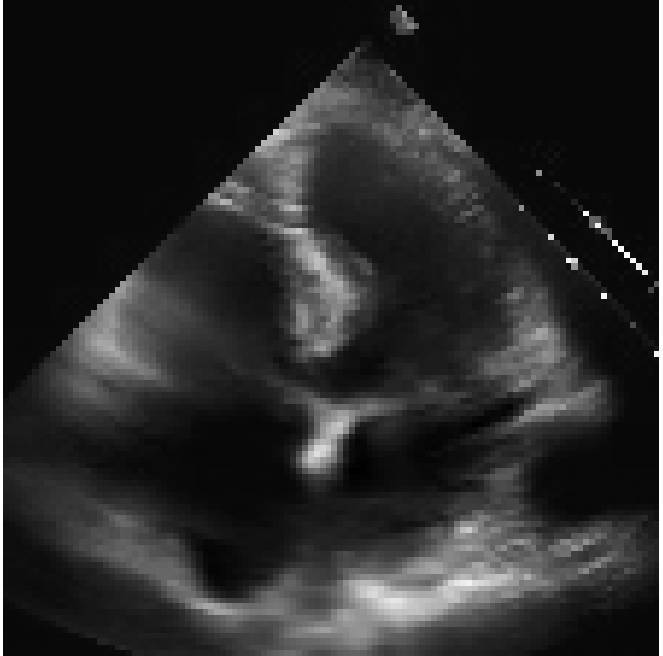} &
    \includegraphics[width=0.15\textwidth]{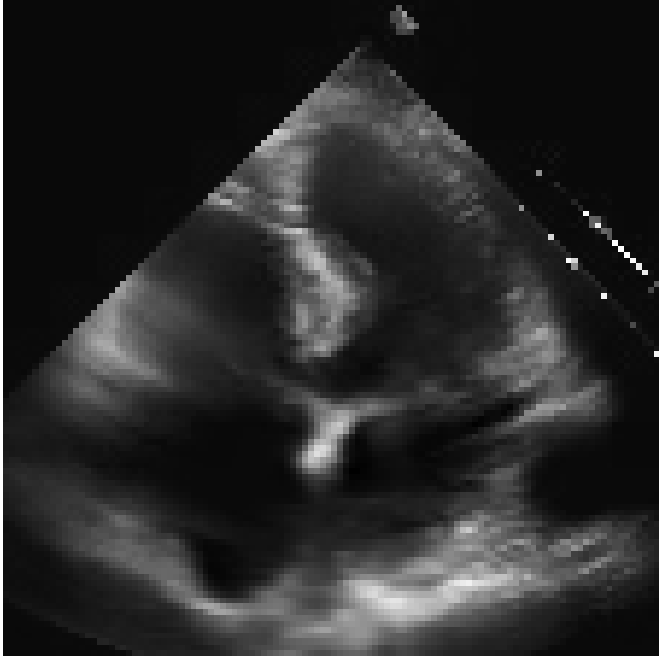} &
    \includegraphics[width=0.15\textwidth]{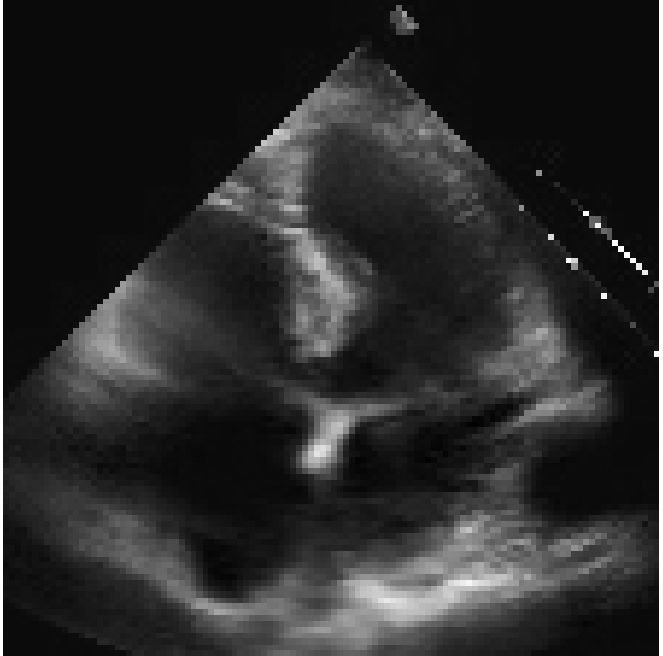} &
    \includegraphics[width=0.15\textwidth]{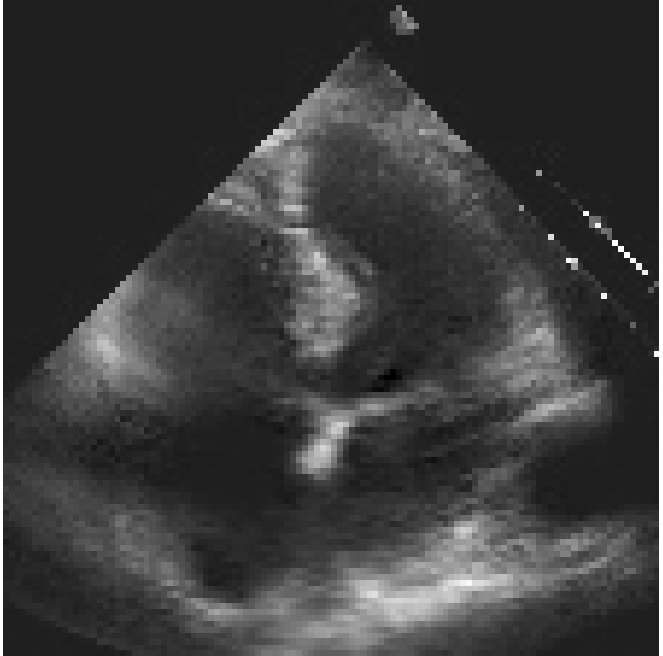}\\
    \includegraphics[width=0.15\textwidth]{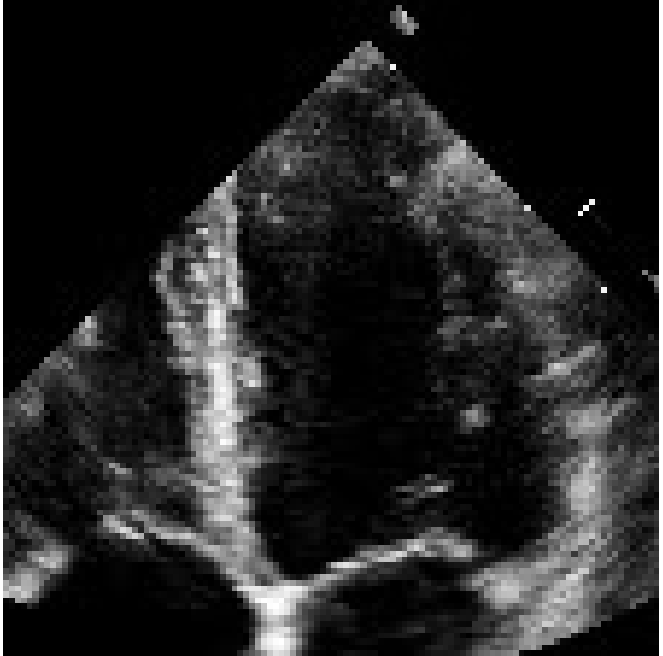} &
    \includegraphics[width=0.15\textwidth]{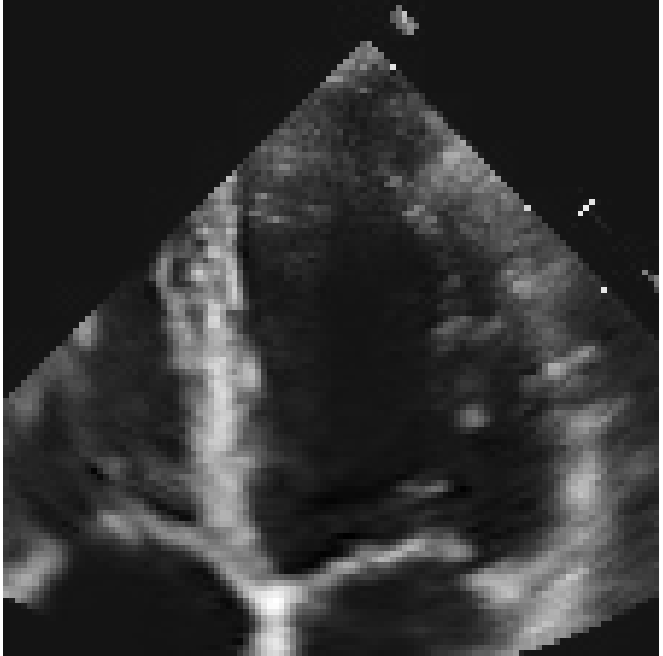} &
    \includegraphics[width=0.15\textwidth]{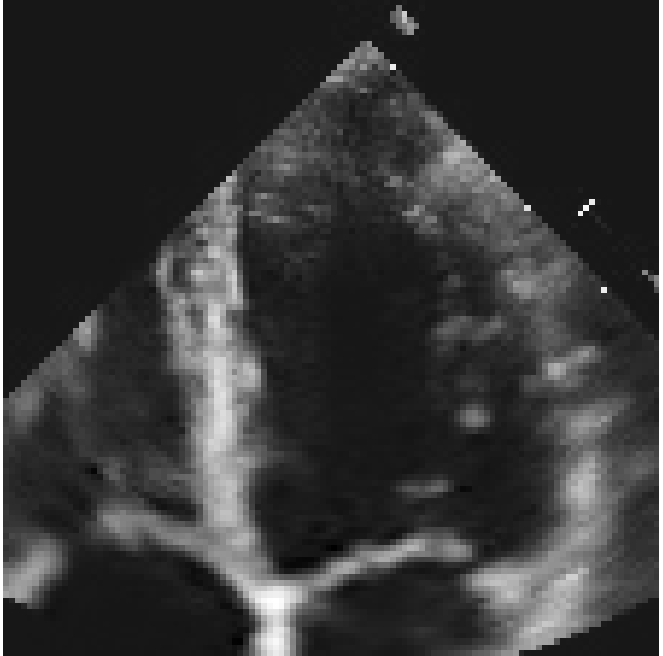} &
    \includegraphics[width=0.15\textwidth]{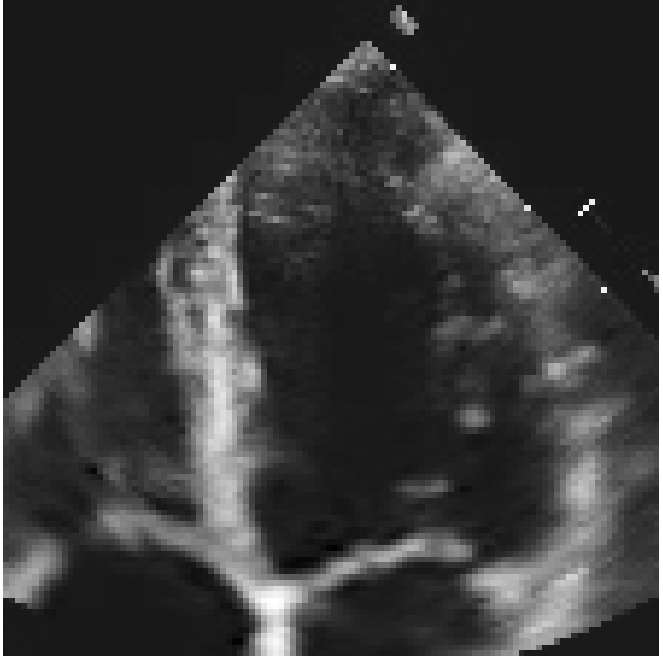} &
    \includegraphics[width=0.15\textwidth]{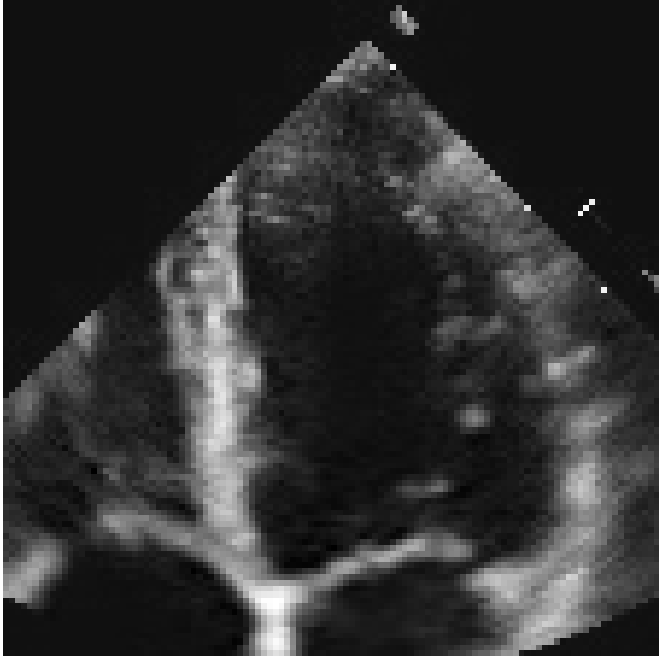} &
    \includegraphics[width=0.15\textwidth]{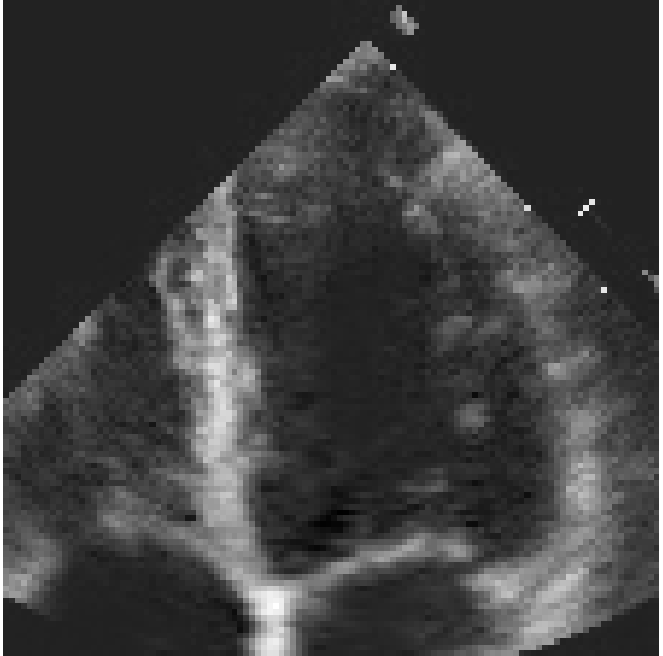}
  \end{tabular}\\
  \caption{Echocardiographic images processed by different methods. (a) Original. (b) GRSVD. (c) eSVD. (d) RSVD. (e) ARRF. (f) randQB-EI.}
    \label{Fig:ex}
\end{figure}

\begin{itemize}
  \item All denoising algorithms maintain extremely high fidelity between the processed echocardiographic and the original echocardiographic. The MSE of all algorithms is low, indicating that the average pixel differences are small, and the PSNR above 50 dB shows that the image clarity is high. In addition, the EPI is close to 1, indicating that critical edge details are well preserved.
  \item Ensuring the required energy proportion with a smaller $\epsilon$-rank. As indicated by the results in the table, under the same $\epsilon$ constraints, GRSVD can retain at least $1-\epsilon$ of the total energy while using a smaller or comparable truncated rank. This smaller $\epsilon$-rank reduces both computational and storage overhead, which is especially beneficial for large-scale data.
  \item Significantly faster computational speed. Compared to eSVD, GRSVD often achieves at least an order-of-magnitude speedup. When compared to RSVD, ARRF, and randQB-EI, GRSVD generally provides multiple-fold acceleration. This speed advantage is crucial for clinical or research applications involving large volumes of ultrasound data.
  \item Comparable image quality to full decomposition, with stable denoising performance. Judging by PSNR, MSE and EPI, GRSVD produces results very close to eSVD, indicating effective noise reduction while preserving critical information in echocardiographic. The method maintains robust performance across different resolutions and $\epsilon$ values, making it well-suited for large-scale echocardiography denoising.
\end{itemize}

\begin{example}
Multicollinearity often undermines the stability and interpretability of regression models. This issue is particularly pronounced in ordinary least squares regression, where the absence of regularization can lead to highly sensitive and unreliable coefficient estimates. Ridge regression (RR) \cite{h1970} introduces an $L_{2}$ penalty term controlled by the hyperparameter $\lambda$, which stabilizes estimates and enhances predictive performance. This example evaluates them efficacy under extreme multicollinearity conditions. 
\end{example}

This example uses synthetic data designed to simulate severe multicollinearity.
Assuming the number of samples is $m$ and the number of features is $n$, we first use the built-in MATLAB functions \texttt{orth} and \texttt{randn} to construct matrices $U_X\in\mathbb{R}^{m\times n}$ and $V_X\in\mathbb{R}^{n\times n}$ with orthonormal columns, and then construct an approximate low-rank singular value matrix: 
\begin{center}
\texttt{$S$ = diag([randn(1,$r_{\epsilon}$),randn(1,$n-r_{\epsilon}$)*1e-8])},
\end{center}
or a strictly low-rank singular value matrix:
\begin{center}
\texttt{$S$ = diag([randn(1,$r$),zeros(1,$n-r$)])}.
\end{center}
Then we construct the matrix $X=U_XSV_X^{\mathrm{H}}$.
The true coefficient vector $\beta$ contains uniformly distributed values in $[-1,1]$, while the response variable $y = X\beta + \varepsilon $ incorporates Gaussian noise ($\sigma=0.05$) to simulate measurement errors. The implementation includes intercept handling through matrix augmentation, ensuring controlled evaluation of regression methods' numerical stability. 
The estimated ridge regression coefficient $\beta$ is expressed as
$\hat{\beta} = (X^{\mathrm{H}}X+\lambda I)^{-1}X^{\mathrm{H}}y$,
where $\lambda\geq0$. We selected the HKB estimator to determine the parameter $\lambda$ \cite{a2025}, and its formula is given by
\[\lambda_{HKB} = \frac{n\tilde{\sigma}^{2}}{\sum_{i=1}^{n}\hat{\alpha}_{i}^{2}}.\]
Here, \\
$\hat{\alpha}_{i}$: Estimated regression coefficient for a predictor variable.\\
$\tilde{\sigma}^{2}$: Estimated error variance, measuring unexplained variability.

We use four methods to predict the coefficient vector $\beta$ for the estimator $\lambda$, namely the built-in function matrix inverse (BII) by MATLAB command \texttt{inv}, Algorithm \ref{Alg.GBRRI} (GRI), Algorithm 4.2 in \cite{9}, and randQB-EI in \cite{yu2018}.
We selected two key indicators, mean square error (MSE) and calculation time (seconds), to evaluate these methods.
The experimental results are shown in Table \ref{Table:RR}.

\begin{itemize}
  \item Under different data sizes and degrees of ill-conditioning, the RR by Algorithm \ref{Alg.GBRRI} achieves a 2-6 times reduction in computation time compared to RR by other inversion methods. This improvement is particularly beneficial for large-scale and high-dimensional data.
  \item When computing the matrix inverse, the RR by Algorithm \ref{Alg.GBRRI} keeps only the first $\epsilon$-rank column vectors from the range of $X$. It maintains almost the same prediction accuracy as RR by other inversion methods.
\end{itemize}

\begin{table}[H]
  \centering\scriptsize
  \renewcommand{\arraystretch}{1.2} 
  \caption{Multicollinearity data regression analysis indicator results}\label{Table:RR}
  \begin{tabular}{cclcc}
\toprule[1.2pt]											
$m$	&	$n$	&	Methods	&		Time(s)		&	MSE	\\
\midrule											
\multirow{4}*{5000}	&	\multirow{4}*{4000}	&	BII	&		0.7091		&	0.002387020	\\
	&		&	GRI	&	\textbf{	0.1409	}	&	0.002386973	\\
	&		&	ARRF	&		0.9693		&	0.002387020	\\
	&		&	randQB-EI	&		0.3581		&	0.002387020	\\
\midrule											
\multirow{4}*{10000}	&	\multirow{4}*{9500}	&	BII	&		6.9344		&	0.002523389	\\
	&		&	GRI	&	\textbf{	1.0160	}	&	0.002523398	\\
	&		&	ARRF	&		9.3122		&	0.002523389	\\
	&		&	randQB-EI	&		2.9679		&	0.002523389	\\
\midrule											
\multirow{4}*{7000}	&	\multirow{4}*{6000}	&	BII	&		2.0600		&	0.002742814	\\
	&		&	GRI	&	\textbf{	0.4927	}	&	0.002742811	\\
	&		&	ARRF	&		2.9096		&	0.002742814	\\
	&		&	randQB-EI	&		1.2679		&	0.002742814	\\
\bottomrule[1.2pt]											
  \end{tabular}\\
\end{table}

\section{Concluding remarks}
In this paper, for accelerating the SVD computation of a nearly low-rank matrix and the matrix inverse compitation, we propose definitions of $\epsilon$-rank and random re-normalization matrix. We then present random re-normalization theory for accelerating nearly low-rank matrix SVD and matrix inversion computations, which is an attempt of recombination of matrix columns, through an action of random transformation matrix to determine matrix $\epsilon$-rank automatically. Those methods use random sampling to identify a subspace that captures
most of the action of a matrix. The input matrix is then compressed either explicitly or
implicitly to this subspace, and the reduced matrix is manipulated deterministically to
obtain the desired low-rank factorization. Finally, we provide a variety of simulations and applications to access performance of the new theory and methods.
The simulations and applications uniformly show that the new theory and methods can be successfully applied in accelerating matrix SVD and matrix inversion computations.

\appendix
\section{Invariance of rank under multiplication by full-rank matrices}
\begin{lemma}\label{p2}
Let $B\in\mathbb{C}^{n\times d}$. If $A \in \mathbb{C}^{m\times n}$ has full column rank and $C\in\mathbb{C}^{d\times s}$ has full row rank, then we have $\mathrm{rank}(ABC)=\mathrm{rank}(B)$. 
\end{lemma}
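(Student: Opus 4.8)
The plan is to sandwich $\mathrm{rank}(ABC)$ between two copies of $\mathrm{rank}(B)$, using only the elementary submultiplicativity of rank, namely $\mathrm{rank}(XY)\le\min\{\mathrm{rank}(X),\mathrm{rank}(Y)\}$ for conformable $X,Y$, together with the existence of one-sided inverses for the full-rank factors $A$ and $C$.

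First I would record that a full-column-rank matrix has a left inverse and a full-row-rank matrix has a right inverse. Since $A\in\mathbb{C}^{m\times n}$ has rank $n$, the Gram matrix $A^{\mathrm{H}}A$ is invertible, so $A_{L}:=(A^{\mathrm{H}}A)^{-1}A^{\mathrm{H}}$ satisfies $A_{L}A=I_{n}$. Likewise, since $C\in\mathbb{C}^{d\times s}$ has rank $d$, the matrix $CC^{\mathrm{H}}$ is invertible, so $C_{R}:=C^{\mathrm{H}}(CC^{\mathrm{H}})^{-1}$ satisfies $CC_{R}=I_{d}$. It is exactly the full-rank hypotheses that make these Gram matrices invertible, so the construction does not secretly assume $A$ or $C$ is square.

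The upper bound is then immediate: applying submultiplicativity twice gives $\mathrm{rank}(ABC)\le\mathrm{rank}(B)$. For the lower bound I would write $B=I_{n}BI_{d}=(A_{L}A)B(CC_{R})=A_{L}(ABC)C_{R}$, and apply submultiplicativity again to obtain $\mathrm{rank}(B)\le\mathrm{rank}(ABC)$. Combining the two inequalities yields $\mathrm{rank}(ABC)=\mathrm{rank}(B)$, which is the claim.

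I do not anticipate a genuine obstacle; the only points requiring care are keeping the one-sided inverses on the correct sides and confirming that the full-rank hypotheses are actually used (so that the argument is not circular with the invertible case). As a self-contained alternative avoiding inner products, one may argue at the level of linear maps: full column rank of $A$ means $x\mapsto Ax$ is injective, hence $\mathrm{null}(ABC)=\mathrm{null}(BC)$ and so $\mathrm{rank}(ABC)=\mathrm{rank}(BC)$ by rank--nullity; full row rank of $C$ means $y\mapsto Cy$ is surjective, hence $\mathrm{range}(BC)=\mathrm{range}(B)$ and so $\mathrm{rank}(BC)=\mathrm{rank}(B)$. Either route completes the proof, and the one-sided-inverse version is the shorter one to write out.
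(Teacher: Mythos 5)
Your proof is correct, and it takes a genuinely different route from the paper's. The paper proves the lemma by taking SVDs $A=U_A\bigl[\begin{smallmatrix}\Sigma_A\\ O\end{smallmatrix}\bigr]V_A^{\mathrm{H}}$ and $C=U_C[\Sigma_C,\,O]V_C^{\mathrm{H}}$, multiplying everything out, discarding the zero blocks, and reducing the claim to $\mathrm{rank}(\Sigma_A V_A^{\mathrm{H}} B U_C \Sigma_C)=\mathrm{rank}(B)$, which follows because the four outer factors are square and nonsingular; that argument therefore leans on the known fact that rank is invariant under multiplication by invertible matrices. Your sandwich argument is more elementary and more self-contained: the upper bound $\mathrm{rank}(ABC)\le\mathrm{rank}(B)$ is pure submultiplicativity, and the lower bound comes from recovering $B=A_L(ABC)C_R$ via the one-sided inverses $A_L=(A^{\mathrm{H}}A)^{-1}A^{\mathrm{H}}$ and $C_R=C^{\mathrm{H}}(CC^{\mathrm{H}})^{-1}$, whose existence is exactly where the full-rank hypotheses enter. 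This avoids invoking the SVD entirely and does not presuppose the invertible special case; the paper's version, on the other hand, fits naturally into a text that is already manipulating SVD block structures throughout. Your alternative rank--nullity argument is also sound. Either write-up would be acceptable.
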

\if1\blind{
\begin{proof}
Let SVDs of matrices $A$ and $C$ be
\begin{eqnarray*}
A=U_{A}\left[
 \begin{array}{c}
\Sigma_{A} \\
O_{(m-n)\times n}
 \end{array}
 \right]V_{A}^{\mathrm{H}}, \quad
C=U_{C}\left[
 \begin{array}{c}
\Sigma_{C},O_{d\times (s-d)}
 \end{array}
 \right]V_{C}^{\mathrm{H}},
\end{eqnarray*}
where $U_{A}\in\mathbb{U}_{m}, V_{A}\in\mathbb{U}_{n}, U_{C}\in\mathbb{U}_{d},V_{C}\in\mathbb{U}_{s}$, and both $\Sigma_{A}\in\mathbb{R}^{n\times n}$ and $\Sigma_{C}\in\mathbb{R}^{d\times d}$ are nonsingular. Then
\[
ABC=U_{A}\left[
 \begin{array}{c}
\Sigma_{A} \\
O_{(m-n)\times n}
 \end{array}
 \right]V_{A}^{\mathrm{H}} B U_{C}\left[
 \begin{array}{c}
\Sigma_{C},O_{d\times (s-d)}
 \end{array}
 \right]V_{C}^{\mathrm{H}},
\]
which implies that
\begin{eqnarray}
\mbox{rank}(ABC)&=&\mbox{rank}\left(\left[
 \begin{array}{c}
\Sigma_{A} \\
O_{(m-n)\times n}
 \end{array}
 \right]V_{A}^{\mathrm{H}} B U_{C}\left[
 \begin{array}{c}
\Sigma_{C},O_{d\times (s-d)}
 \end{array}
 \right]\right)\nonumber\\
 &=&\mbox{rank}\left(\left[
 \begin{array}{cc}
\Sigma_{A} V_{A}^{\mathrm{H}} B U_{C} \Sigma_{C}&O_{n\times (s-d)} \\
O_{(m-n)\times d}&O_{(m-n)\times (s-d)}
 \end{array}
 \right]\right)\nonumber\\
&=&\mbox{rank}(
\Sigma_{A} V_{A}^{\mathrm{H}}BU_{C} \Sigma_{C}).\nonumber
\end{eqnarray}
Since $\Sigma_{A},V_{A},U_{C}$ and $\Sigma_{C}$ are nonsingular, then
\begin{eqnarray*}
\mbox{rank}(ABC)=\mbox{rank}(
\Sigma_{A} V_{A}^{\mathrm{H}}BU_{C}\Sigma_{C})
=\mbox{rank}(B).
\end{eqnarray*}
\end{proof}
} \fi

\section{Expectation of Norms}
We recall the following result on the expected Frobenius norm of a scaled Gaussian rmatrix \cite[Proposition A.1]{9}.
\begin{lemma} \label{sgt1}
Fix real matrices $S,T$ and draw a real Gaussian random matrix
$\Omega$. Then $\mathbb{E}\| S\Omega T\|_{\mathrm{F}}^{2}=\| S\|_{\mathrm{F}}^2\| T\|_{\mathrm{F}}^2$.
\end{lemma}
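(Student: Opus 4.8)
The plan is to reduce the claim to a short second-moment computation for the entries of $\Omega$. First I would rewrite the Frobenius norm as a trace: since all matrices are real,
\[
\|S\Omega T\|_{\mathrm{F}}^2 = \tr\big((S\Omega T)^{\mathrm{T}}(S\Omega T)\big) = \tr\big(T^{\mathrm{T}}\Omega^{\mathrm{T}} S^{\mathrm{T}} S\, \Omega T\big),
\]
and then use cyclic invariance of the trace to group this as $\tr(\Omega^{\mathrm{T}} M\, \Omega N)$ with $M := S^{\mathrm{T}}S$ and $N := TT^{\mathrm{T}}$, both symmetric positive semidefinite, and with $\tr(M) = \|S\|_{\mathrm{F}}^2$ and $\tr(N) = \|T\|_{\mathrm{F}}^2$. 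Thus it suffices to show that $\mathbb{E}\,\tr(\Omega^{\mathrm{T}} M \,\Omega N) = \tr(M)\tr(N)$ for any fixed matrices $M,N$ of compatible size.

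The key step is the identity $\mathbb{E}\big[\Omega N \Omega^{\mathrm{T}}\big] = \tr(N)\, I$. I would verify this entrywise: writing $\Omega = (\omega_{ij})$ with i.i.d.\ $\mathcal{N}(0,1)$ entries, the $(k,l)$ entry of $\Omega N \Omega^{\mathrm{T}}$ is $\sum_{i,j} \omega_{ki} N_{ij} \omega_{lj}$, and taking expectations uses only $\mathbb{E}[\omega_{ki}\omega_{lj}] = \delta_{kl}\delta_{ij}$, leaving $\delta_{kl}\sum_i N_{ii} = \delta_{kl}\tr(N)$. With this in hand, another application of cyclicity gives
\[
\mathbb{E}\,\tr(\Omega^{\mathrm{T}} M\, \Omega N) = \mathbb{E}\,\tr\big(M\, \Omega N \Omega^{\mathrm{T}}\big) = \tr\big(M\, \mathbb{E}[\Omega N \Omega^{\mathrm{T}}]\big) = \tr(N)\,\tr(M),
\]
where pulling the expectation inside the trace is legitimate because the trace is a finite linear combination of entries with finite second moments. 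Substituting $\tr(M)=\|S\|_{\mathrm{F}}^2$ and $\tr(N)=\|T\|_{\mathrm{F}}^2$ finishes the argument.

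There is no genuine obstacle here; the only care needed is bookkeeping of matrix dimensions (so that $S\Omega T$ and all intermediate products are well defined) and noting that finiteness of the Gaussian second moments justifies interchanging expectation with the finite sums that define the trace. One could alternatively invoke \cite[Proposition A.1]{9} verbatim, but the self-contained computation above is just as short and keeps the section elementary.
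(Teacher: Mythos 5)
Your proof is correct. The paper does not actually prove this lemma---it simply recalls it as Proposition~A.1 of the cited reference \cite{9}---so your self-contained second-moment computation fills in exactly the standard argument behind that citation: reducing to $\tr(\Omega^{\mathrm{T}} M \Omega N)$ with $M=S^{\mathrm{T}}S$, $N=TT^{\mathrm{T}}$, establishing $\mathbb{E}[\Omega N\Omega^{\mathrm{T}}]=\tr(N)I$ from $\mathbb{E}[\omega_{ki}\omega_{lj}]=\delta_{kl}\delta_{ij}$, and concluding by cyclicity of the trace. The dimension bookkeeping and the interchange of expectation with the finite sums are all in order, so nothing further is needed.
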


The following corollary gives the complex version of Lemma \ref{sgt1}. The proof can be easily established by using the real representation of a complex matrix and unitary invariance of Gaussian matrices and therefore we omit it. 
\begin{corollary} \label{cor:sgt1} 
Fix complex matrices $S,T$ and draw a complex Gaussian random matrix
$\Omega$. Then $\mathbb{E}\| S\Omega T\|_{\mathrm{F}}^{2}=2\| S\|_{\mathrm{F}}^2\| T\|_{\mathrm{F}}^2$.
In particular, if $\Omega$ is real, then  $\mathbb{E}\| S\Omega T\|_{\mathrm{F}}^{2}=\| S\|_{\mathrm{F}}^2\| T\|_{\mathrm{F}}^2$.
\end{corollary}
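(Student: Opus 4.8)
The plan is to establish the identity by a direct entrywise second-moment computation, which handles the real and complex cases uniformly, and then to note how the same conclusion is repackaged via the real representation of a complex matrix, as suggested in the text preceding the statement. No structural idea is needed beyond expanding a Frobenius norm and using the pair-correlation structure of i.i.d.\ Gaussian entries.

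First I would expand the Frobenius norm as $\|S\Omega T\|_{\mathrm{F}}^{2}=\sum_{p,q}\big|(S\Omega T)_{pq}\big|^{2}$ with $(S\Omega T)_{pq}=\sum_{j,l}S_{pj}\,\Omega_{jl}\,T_{lq}$, take expectations, and expand the square:
\[
\mathbb{E}\big|(S\Omega T)_{pq}\big|^{2}=\sum_{j,l}\sum_{j',l'}S_{pj}\overline{S_{pj'}}\,T_{lq}\overline{T_{l'q}}\;\mathbb{E}\big[\Omega_{jl}\overline{\Omega_{j'l'}}\big].
\]
The only probabilistic input is that the entries of $\Omega$ are i.i.d., so $\mathbb{E}[\Omega_{jl}\overline{\Omega_{j'l'}}]=\kappa\,\delta_{jj'}\delta_{ll'}$ with $\kappa=\mathbb{E}|\Omega_{11}|^{2}$; note that the conjugate pairing means no $\mathbb{E}[\Omega_{jl}\Omega_{j'l'}]$ term ever appears, so only this one number matters, and in particular no higher moments enter. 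For a real standard Gaussian $\kappa=1$, whereas for a complex standard Gaussian whose entries have real and imaginary parts independent $\mathcal{N}(0,1)$ one gets $\kappa=\mathbb{E}[x^{2}]+\mathbb{E}[y^{2}]=2$.

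Substituting and collapsing the Kronecker deltas gives $\mathbb{E}|(S\Omega T)_{pq}|^{2}=\kappa\sum_{j,l}|S_{pj}|^{2}|T_{lq}|^{2}$, and summing over $p,q$ the double sum factors:
\[
\mathbb{E}\|S\Omega T\|_{\mathrm{F}}^{2}=\kappa\Big(\sum_{p,j}|S_{pj}|^{2}\Big)\Big(\sum_{l,q}|T_{lq}|^{2}\Big)=\kappa\,\|S\|_{\mathrm{F}}^{2}\,\|T\|_{\mathrm{F}}^{2},
\]
which is the claimed identity with $\kappa=2$ in general and $\kappa=1$ when $\Omega$ is real. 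For the alternative route matching the text, I would send each complex $M=M_{1}+iM_{2}$ to its real form $\widetilde M=\left[\begin{smallmatrix}M_{1}&-M_{2}\\ M_{2}&M_{1}\end{smallmatrix}\right]$, which is multiplicative and satisfies $\|\widetilde M\|_{\mathrm{F}}^{2}=2\|M\|_{\mathrm{F}}^{2}$, so that $\|S\Omega T\|_{\mathrm{F}}^{2}=\tfrac12\|\widetilde S\,\widetilde\Omega\,\widetilde T\|_{\mathrm{F}}^{2}$; writing $\widetilde\Omega$ through its two independent real Gaussian blocks, splitting the product along block columns and rows, and invoking Lemma~\ref{sgt1} on each piece (the block cross terms vanishing in expectation by independence) recovers $\mathbb{E}\|\widetilde S\widetilde\Omega\widetilde T\|_{\mathrm{F}}^{2}=4\|S\|_{\mathrm{F}}^{2}\|T\|_{\mathrm{F}}^{2}$.

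I do not anticipate any genuine obstacle: the whole statement is a first- and second-moment bookkeeping exercise. The single point demanding care is the scaling convention for the complex Gaussian entries — it is precisely the choice ``real and imaginary parts each of unit variance'' that produces the factor $2$, and a different normalization would merely rescale $\kappa$; the matrix algebra (expanding the square, collapsing deltas, factoring the product of sums, or alternatively tracking the block cross terms in the real-representation version) is entirely routine.
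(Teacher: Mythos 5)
Your proof is correct. Note that the paper itself omits the proof of this corollary, only indicating that it "can be easily established by using the real representation of a complex matrix and unitary invariance of Gaussian matrices" (i.e.\ by reduction to Lemma~\ref{sgt1}); your primary argument is instead a self-contained entrywise second-moment computation that never invokes Lemma~\ref{sgt1} or unitary invariance, and in fact reproves the real case of that lemma along the way. This is arguably the cleaner route: the entire statement reduces to the single number $\kappa=\mathbb{E}|\Omega_{11}|^{2}$, and you correctly isolate the normalization convention (real and imaginary parts each of unit variance, giving $\kappa=2$) as the only place the factor $2$ can come from. One small caveat on your secondary sketch via the real representation: after expanding $\widetilde S\widetilde\Omega\widetilde T$ into the four block products, the cross terms pairing the two $\Omega_{1}$-pieces (and likewise the two $\Omega_{2}$-pieces) do \emph{not} vanish by independence --- they involve the same Gaussian block twice. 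They vanish instead because $\mathbb{E}[\Omega_{1}^{\mathrm{T}}M\Omega_{1}]=\operatorname{tr}(M)I$ together with $\operatorname{tr}(S_{1}^{\mathrm{T}}S_{2}-S_{2}^{\mathrm{T}}S_{1})=0$, an orthogonality built into the block structure of $\widetilde S$ and $\widetilde T$. Since that route is only offered as an alternative and your direct computation is complete, this does not affect the validity of the proof.
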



\section{Proof of Lemma \ref{Thm.Expect}}
We first recall the following result on the eigenvalue inequalities of a Hermitian matrix \cite[Theorem 8.12]{zhang2011}.
\begin{lemma} \label{lem:eig}
Let $S,T\in\mathbb{C}^{n\times n}$ be Hermitian matrices. Then 
\[
\lambda_i(S)+\lambda_n(T)\le\lambda_i(S+T)\le \lambda_i(S)+\lambda_1(T),\quad i=1,\ldots,n.
\]
where $\lambda_i(S)$ denotes the $i$th largest eigenvalue of $S$.
\end{lemma}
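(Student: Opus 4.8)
The plan is to derive both inequalities from the Courant--Fischer min--max characterization of the eigenvalues of a Hermitian matrix. Writing $R_M(x)=x^{\mathrm{H}}Mx/(x^{\mathrm{H}}x)$ for the Rayleigh quotient of a Hermitian $M\in\mathbb{C}^{n\times n}$ at a nonzero vector $x$, I would use the form
\[
\lambda_i(M)=\min_{\dim W=n-i+1}\ \max_{0\neq x\in W} R_M(x),
\]
where the outer minimum runs over all subspaces $W\subseteq\mathbb{C}^{n}$ of the indicated dimension. First I would recall, or briefly justify by diagonalizing $M$ in an orthonormal eigenbasis, this identity together with the elementary bounds $\lambda_n(M)\le R_M(x)\le\lambda_1(M)$ valid for every nonzero $x$.

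For the upper bound I would fix the subspace $W^{\star}$ of dimension $n-i+1$ attaining the minimum for $S$, so that $\max_{0\neq x\in W^{\star}}R_S(x)=\lambda_i(S)$. Since the Rayleigh quotient is additive, $R_{S+T}(x)=R_S(x)+R_T(x)\le R_S(x)+\lambda_1(T)$ for each $x$. Because $\lambda_i(S+T)$ is a minimum over all $(n-i+1)$-dimensional subspaces, it cannot exceed the value attained on the particular choice $W^{\star}$, and combining the two observations gives
\[
\lambda_i(S+T)\le\max_{0\neq x\in W^{\star}}R_{S+T}(x)\le\max_{0\neq x\in W^{\star}}R_S(x)+\lambda_1(T)=\lambda_i(S)+\lambda_1(T).
\]

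For the lower bound, rather than repeat the argument, I would apply the upper bound just established to the decomposition $S=(S+T)+(-T)$, noting that $-T$ is again Hermitian with $\lambda_1(-T)=-\lambda_n(T)$. This yields $\lambda_i(S)\le\lambda_i(S+T)+\lambda_1(-T)=\lambda_i(S+T)-\lambda_n(T)$, which rearranges to the desired left inequality $\lambda_i(S)+\lambda_n(T)\le\lambda_i(S+T)$.

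The only genuinely delicate point is the justification of the min--max identity itself; every downstream step is a one-line consequence. I expect the main obstacle to be the dimension-counting argument showing the minimizing subspace cannot push the inner maximum below $\lambda_i(M)$: any $(n-i+1)$-dimensional $W$ must intersect the span of the top $i$ eigenvectors nontrivially, since $(n-i+1)+i>n$, so the inner maximum is at least $\lambda_i(M)$; meanwhile the span of the eigenvectors for $\lambda_i(M),\ldots,\lambda_n(M)$ attains the value $\lambda_i(M)$ exactly. Keeping the directions of the inequalities and the subspace dimensions consistent is where the care is required.
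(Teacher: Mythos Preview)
Your proof is correct and is the standard Courant--Fischer argument for Weyl's inequalities. The paper itself does not prove this lemma at all: it simply recalls the result with a citation to \cite[Theorem 8.12]{zhang2011}. So there is nothing to compare approaches against; your self-contained derivation via the min--max characterization, followed by the substitution $S=(S+T)+(-T)$ to obtain the lower bound from the upper, is exactly the textbook route and is sound.
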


{\em Proof of Lemma {\rm\ref{Thm.Expect}}}.
To ease the presentation, let $G=(I-Q_{k}Q_{k}^{\mathrm{H}})A$. The Frobenius norm of $A$ can be rewritten as
\begin{equation}\label{Eq.diffAhat}
   \|A\|_\mathrm{F}^2=\|(I-Q_{k}Q_{k}^\mathrm{H})A\|_\mathrm{F}^2+\|Q_{k}Q_{k}^\mathrm{H}A\|_\mathrm{F}^2=\|G\|_\mathrm{F}^2+\|\hat{A}\|_\mathrm{F}^2,
\end{equation}
where the first equality holds since the column spaces of $(I-Q_{k}Q_{k}^\mathrm{H})A$ and $Q_{k}Q_{k}^\mathrm{H}A$ are orthogonal. 
By Corollary \ref{cor:sgt1} we have
\begin{equation}\label{ieq.epG}
  \mathbb{E} b_{k+1,k+1}^2=\mathbb{E}(\|G\omega_{k+1}\|_2^2)=\|G\|_\mathrm{F}^2.
\end{equation}
\eqref{Eq.diffAhat}, together with \eqref{ieq.epG}, indicates that
\[
 \|A\|_{\mathrm{F}}^2-\|\hat{A}\|_{\mathrm{F}}^2=\|G\|_{\mathrm{F}}^2=\mathbb{E}b_{k+1,k+1}^2.
\]
Therefore,
\begin{equation}\label{ieq:aat}
\frac{\| A-\hat{A}\|_{\mathrm{F}}}{\| A\|_{\mathrm{F}}}=\frac{\sqrt{\| A\|_{\mathrm{F}}^2-\|\hat{A}\|_{\mathrm{F}}^2}}{\| A\|_{\mathrm{F}}}  \leq \sqrt{\epsilon}
\end{equation}
holds if and only if $\mathbb{E} b_{k+1,k+1}^2\leq\| A\|_{\mathrm{F}}^{2}\epsilon$. 
In this case, from \eqref{ieq:aat} we have
\begin{equation}\label{ieq:aat-sg}
\frac{\sum_{i=1}^{k}\sigma_{i}^{2}(\hat{A})}{\sum_{i=1}^{p}\sigma_{i}^{2}(A)}=\frac{\|\hat{A}\|_{\mathrm{F}}^2 }{\| A\|_{\mathrm{F}}^2}\geq 1-\epsilon.
\end{equation}

On the other hand, since $Q_k$ has orthonormal columns, we have
\begin{equation}\label{eq:ag}
A^\mathrm{H}A=A^\mathrm{H}Q_kQ_k^\mathrm{H}A+A^\mathrm{H}(I-Q_kQ_k^\mathrm{H})A=\hat{A}^\mathrm{H}\hat{A}+G^\mathrm{H}G.
\end{equation}
By Lemma \ref{lem:eig} we have
\begin{equation}\label{ieq:aat-sg2}
\sigma_i^2(A)=\lambda_i(A^\mathrm{H}A)\ge \lambda_i(\hat{A}^\mathrm{H}\hat{A})+\lambda_n(G^\mathrm{H}G)\ge \lambda_i(\hat{A}^\mathrm{H}\hat{A})=\sigma_i^2(\hat{A}),
\end{equation}
for  $i=1,\ldots,k$ since $G^\mathrm{H}G$ is Hermitian and positive semidefinite.
From \eqref{ieq:aat-sg} and \eqref{ieq:aat-sg2} we have
\[
\frac{\sum_{i=1}^{k}\sigma_{i}^{2}(A)}{\sum_{i=1}^{p}\sigma_{i}^{2}(A)}\geq 1-\epsilon.
\]
By Definition \ref{Ass1}, this implies that the $\epsilon$-rank of $A$ is $r_{\epsilon}=k$. 
%
{\hfill $\fbox{}$ \vspace*{5mm}} 

\section{Proof of Lemma \ref{vq0}}
As we know, a random variable $\xi$  is called sub-gaussian if $\mathbb{E}\exp(\xi^2/K^2)\le 2$ for some $K>0$. Also, define the sub-gaussian norm of $\xi$ as
\[ \|\xi\|_{\psi_2} = \sup_{p\geq1} p^{-1/2}(\mathbb{E}|X|^p)^{1/p}.\]

We now recall a complex version of the Hanson-Wright inequality for quadratic forms in sub-gaussian random variables \cite[Theorem 1.1]{rudelson2013hanson}.
\begin{lemma}\label{Lem.H-W}
 Let $\tau=[\tau_1,\tau_2,\ldots,\tau_n]$ be a real random vector, where  $\tau_i$ are independent sub-gaussian variables  satisfying $\mathbb{E} \tau_i=0$ and $\|\tau_i\|_{\psi_2} \leq K$. Let $C\in\mathbb{C}^{n\times n}$ be a Hermitian matrix. Then, for every $t >0$,
 \[
 P(|\tau^{\mathrm{T}}C\tau-\mathbb{E}\tau^{\mathrm{T}}C\tau|>t) \leq 2\exp\Big(-c\min\Big\{\frac{t^2}{K^4\|A\|_\mathrm{F}^2},\frac{t}{K^2\|A\|}\Big\}\Big),
 \]
where $c$ is an absolute constant.
\end{lemma}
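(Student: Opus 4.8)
The plan is to reduce the complex Hermitian case to the real symmetric Hanson--Wright inequality of \cite{rudelson2013hanson}, exploiting crucially that $\tau$ is a \emph{real} vector. First I would split $C$ into real and imaginary parts, writing $C = C_R + iC_I$ with $C_R = \mathrm{Re}(C)$ and $C_I = \mathrm{Im}(C)$ both real. The Hermitian condition $C^{\mathrm{H}} = C$ forces $C_R^{\mathrm{T}} = C_R$ (real symmetric) and $C_I^{\mathrm{T}} = -C_I$ (real skew-symmetric), since $\overline{C} = C^{\mathrm{T}}$ entrywise.

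The key observation is that for a real vector $\tau$ the skew-symmetric part contributes nothing. Indeed $\tau^{\mathrm{T}} C_I \tau$ is a scalar, so $\tau^{\mathrm{T}} C_I \tau = (\tau^{\mathrm{T}} C_I \tau)^{\mathrm{T}} = \tau^{\mathrm{T}} C_I^{\mathrm{T}} \tau = -\tau^{\mathrm{T}} C_I \tau$, whence $\tau^{\mathrm{T}} C_I \tau = 0$. Therefore $\tau^{\mathrm{T}} C \tau = \tau^{\mathrm{T}} C_R \tau$ is real, and likewise $\mathbb{E}\,\tau^{\mathrm{T}} C \tau = \mathbb{E}\,\tau^{\mathrm{T}} C_R \tau$; the centered quadratic form in the complex Hermitian matrix $C$ coincides exactly with the centered quadratic form in the real symmetric matrix $C_R$. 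This identification is the entire content of the reduction.

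With the reduction in hand, I would apply the real Hanson--Wright inequality directly to $C_R$ and the sub-gaussian vector $\tau$ to obtain the tail bound $2\exp(-c\min\{t^2/(K^4\|C_R\|_{\mathrm{F}}^2),\, t/(K^2\|C_R\|)\})$ with $c$ the absolute constant furnished there. It then remains only to replace $C_R$ by $C$ inside the two norms. For the Frobenius norm this is immediate from $\|C\|_{\mathrm{F}}^2 = \|C_R\|_{\mathrm{F}}^2 + \|C_I\|_{\mathrm{F}}^2 \ge \|C_R\|_{\mathrm{F}}^2$. For the spectral norm I would use $C_R = (C+\overline{C})/2$ together with $\|\overline{C}\| = \|C\|$ (conjugation preserves singular values) and the triangle inequality to conclude $\|C_R\| \le \|C\|$. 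Since the tail bound is monotone increasing in both norms, enlarging $\|C_R\|_{\mathrm{F}}$ to $\|C\|_{\mathrm{F}}$ and $\|C_R\|$ to $\|C\|$ only weakens the bound, yielding precisely the stated inequality.

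I expect no serious obstacle: the substance lies entirely in the elementary observation that a real quadratic form annihilates the skew-symmetric (imaginary) part of a Hermitian matrix, collapsing the problem to a cited result. The only point demanding care is checking that both norm replacements go in the inequality-preserving direction, i.e.\ that $\|C_R\|_{\mathrm{F}} \le \|C\|_{\mathrm{F}}$ and $\|C_R\| \le \|C\|$ hold and that the exponential bound is monotone in these quantities, so that the passage from $C_R$ to $C$ produces a valid upper bound rather than a tighter one.
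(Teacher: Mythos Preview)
Your reduction is correct and complete. The paper, however, does not prove this lemma at all: it simply \emph{recalls} the result with the citation \cite[Theorem~1.1]{rudelson2013hanson} and moves on. So there is no paper-proof to compare against.

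That said, your argument is not wasted effort. The cited Rudelson--Vershynin theorem is stated for a \emph{real} matrix, whereas the lemma here allows $C$ to be complex Hermitian; the paper silently treats this extension as immediate. Your observation that a real vector $\tau$ annihilates the skew-symmetric imaginary part, so that $\tau^{\mathrm{T}}C\tau=\tau^{\mathrm{T}}C_R\tau$, together with the norm comparisons $\|C_R\|_{\mathrm{F}}\le\|C\|_{\mathrm{F}}$ and $\|C_R\|\le\|C\|$, is exactly what is needed to justify that passage. In effect you have supplied the missing bridge the paper assumes. (Note also the evident typo in the displayed bound: the norms should be $\|C\|_{\mathrm{F}}$ and $\|C\|$, not $\|A\|_{\mathrm{F}}$ and $\|A\|$.)
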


{\em Proof of Lemma {\rm \ref{vq0}}}.
Let $G=(I-Q_kQ_k^{\mathrm{H}})A$. 
We first derive an upper bound of $\mathbb{E}(b_{k+1,k+1}^2)$. Using Equation \eqref{eq:ag} and Lemma \ref{lem:eig} we have
\begin{equation}\label{ieq:ag-1}
\sigma_i^2(A)=\lambda_i(A^\mathrm{H}A)\ge \lambda_i(G^\mathrm{H}G) + \lambda_n(\hat{A}^\mathrm{H}\hat{A})\ge  \lambda_i(G^\mathrm{H}G) =\sigma_i^2(G), 
\end{equation}
for $i=1,\ldots,p-k$ since $\hat{A}^\mathrm{H}\hat{A}$ is Hermitian and positive semidefinite.
The diagonal element $b_{k+1,k+1}^2$ can thus be rewritten as
\[
b_{k+1,k+1}^2=\|G\omega_{k+1}\|_2^2=\omega_{k+1}^{\mathrm{T}}G^{\mathrm{H}}G\omega_{k+1}.
\]
We note that  a Gaussian random variable $\xi$ is a sub-gaussian random variable with $\|\xi\|_{\psi_2} \le K$ for some $K>0$. By Lemma \ref{Lem.H-W}   we haveB
\begin{eqnarray}\label{Eq.Ebk}
&& P(|b_{k+1,k+1}^2-\mathbb{E}\;b_{k+1,k+1}^2| \leq t) = P(| \omega_{k+1}^{\mathrm{T}}G^{\mathrm{H}}G\omega_{k+1} 
-\mathbb{E}\omega_{k+1}^{\mathrm{T}}G^{\mathrm{H}}G\omega_{k+1} | \leq t)  \nonumber\\
&&\ge 1- 2\exp\Big(-c\min\big\{\frac{t^2} {K^4\|G^{\mathrm{H}}G\|_\mathrm{F}^2}, \frac{t}{K^2\|G^{\mathrm{H}}G\|}\big\}\Big).
\end{eqnarray}
By setting $t=\epsilon\|A\|_\mathrm{F}^2/2$, we have
\begin{equation}\label{Eq.Ebkprob}
\frac{t^2}{\|G^{\mathrm{H}}G\|_\mathrm{F}^2} = \frac{\epsilon^2\|A\|_\mathrm{F}^4}{4\|G^{\mathrm{H}}G\|_\mathrm{F}^2}
\ge \frac{\epsilon^2\|A\|_\mathrm{F}^4}{4\|G\|_\mathrm{F}^4}
 \geq \frac{ \epsilon^2 \|A\|_\mathrm{F}^2}{4\|G\|_\mathrm{F}^2} \geq \frac{\epsilon^2 \sum_{i=1}^{p}\sigma_i^2(A) }{4\sum_{i=1}^{p-k} \sigma_i^2(A)},
\end{equation}
where the first inequality uses the fact that $\|G^{\mathrm{H}}G\|_\mathrm{F}\le \|G\|_\mathrm{F}^2$, the second inequality uses the fact that $\|A\|_\mathrm{F}^2\ge\|G\|_\mathrm{F}^2$, and the last inequality uses \eqref{ieq:ag-1}.

From    \eqref{ieq.epG}, \eqref{Eq.Ebk}, and \eqref{Eq.Ebkprob},
\[
\|A-\hat{A}\|_{\mathrm{F}}^2=\|G\|_\mathrm{F}^2=\mathbb{E} b_{k+1,k+1}^2 \leq b_{k+1,k+1}^2+\frac{1}{2} \|A\|_\mathrm{F}^2\epsilon
\]
holds with probability at least $1-2 \exp(-c\epsilon^2 \sum_{i=1}^{p}\sigma_i^2(A) / (4K^4\sum_{i=1}^{p-k} \sigma_i^2(A)))$. Thus, if $b_{k+1,k+1}^2 \leq \|A\|_\mathrm{F}^2\epsilon/2$, then the condition \eqref{ieq:aat} holds with probability at least $1-2 \exp(-c\epsilon^2 \sum_{i=1}^{p}\sigma_i^2(A) / (4K^4\sum_{i=1}^{p-k} \sigma_i^2(A)))$. This, together with  \eqref{ieq:aat-sg2}, indicates that  
$\sum_{i=1}^{k}\sigma_{i}^{2}(\hat{A})/\sum_{i=1}^{n}\sigma_{i}^{2}(A)$ $\geq 1-\epsilon$ and $r_{\epsilon}=k$ with probability at least $1-2 \exp(-c\epsilon^2 \sum_{i=1}^{p}\sigma_i^2(A) / (4K^4\sum_{i=1}^{p-k} \sigma_i^2(A)))$. 
{\hfill $\fbox{}$ \vspace*{5mm}}

\section{Proof of Theorem \ref{l2}}
We first introduce a well-known Sherman-Morrison-Woodbury formula in Lemma \ref{l1}.
\begin{lemma}\label{l1}
Let $A\in\mathbb{C}^{n\times n}$ be nonsingular and $U,V\in\mathbb{C}^{n\times k}$ {\rm(}$k\le n${\rm)}. If $I_{k}+V^{\mathrm{H}}A^{-1}U$ is nonsingular, then $A+UV^{\mathrm{H}}$ is nonsingular and
\[
(A+UV^{\mathrm{H}})^{-1}=A^{-1}-A^{-1}U(I_{k}+V^{\mathrm{H}}A^{-1}U)^{-1}V^{\mathrm{H}}A^{-1}.
\]
\end{lemma}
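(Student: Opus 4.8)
Since Lemma \ref{l1} is the Sherman--Morrison--Woodbury identity, the plan is the standard one: exhibit the claimed expression as a genuine two-sided inverse by direct multiplication. Write $M=A^{-1}-A^{-1}U\bigl(I_k+V^{\mathrm{H}}A^{-1}U\bigr)^{-1}V^{\mathrm{H}}A^{-1}$ for the claimed right-hand side, which is well defined precisely because $A$ and $I_k+V^{\mathrm{H}}A^{-1}U$ are assumed nonsingular. First I would compute $(A+UV^{\mathrm{H}})M$, distributing and using $(A+UV^{\mathrm{H}})A^{-1}=I_n+UV^{\mathrm{H}}A^{-1}$, so that
\[
(A+UV^{\mathrm{H}})M=I_n+UV^{\mathrm{H}}A^{-1}-\bigl(U+UV^{\mathrm{H}}A^{-1}U\bigr)\bigl(I_k+V^{\mathrm{H}}A^{-1}U\bigr)^{-1}V^{\mathrm{H}}A^{-1}.
\]
The key algebraic step is to recognize $U+UV^{\mathrm{H}}A^{-1}U=U\bigl(I_k+V^{\mathrm{H}}A^{-1}U\bigr)$; the factor $\bigl(I_k+V^{\mathrm{H}}A^{-1}U\bigr)$ then cancels its inverse, leaving $I_n+UV^{\mathrm{H}}A^{-1}-UV^{\mathrm{H}}A^{-1}=I_n$.

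Having shown $(A+UV^{\mathrm{H}})M=I_n$ with $A+UV^{\mathrm{H}}$ an $n\times n$ matrix, I would conclude that $A+UV^{\mathrm{H}}$ is nonsingular and that $M$ is its (necessarily two-sided) inverse; for completeness one can also check $M(A+UV^{\mathrm{H}})=I_n$ by the mirror-image computation, pulling the factor $\bigl(I_k+V^{\mathrm{H}}A^{-1}U\bigr)$ out on the right. An equivalent route, if preferred, is to factor $A+UV^{\mathrm{H}}=A\bigl(I_n+(A^{-1}U)V^{\mathrm{H}}\bigr)$ and reduce to the push-through identity $\bigl(I_n+XY\bigr)^{-1}=I_n-X\bigl(I_k+YX\bigr)^{-1}Y$ with $X=A^{-1}U$ and $Y=V^{\mathrm{H}}$ (here $I_k+YX=I_k+V^{\mathrm{H}}A^{-1}U$ is invertible by hypothesis, so $I_n+XY$ is too), which is proved by the same one-line cancellation.

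Honestly there is no substantive obstacle here: this is a textbook identity and the proof is a short verification. The only points that merit a word are (i) that the hypotheses --- nonsingularity of $A$ and of $I_k+V^{\mathrm{H}}A^{-1}U$ --- are exactly what makes $M$ meaningful and what the cancellation uses, and (ii) the remark that a one-sided inverse of a square matrix is automatically two-sided, so that establishing $(A+UV^{\mathrm{H}})M=I_n$ already yields both the nonsingularity of $A+UV^{\mathrm{H}}$ and the stated formula.
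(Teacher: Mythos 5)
Your verification is correct and complete: the cancellation $U+UV^{\mathrm{H}}A^{-1}U=U\bigl(I_k+V^{\mathrm{H}}A^{-1}U\bigr)$ is exactly the right step, and the observation that a one-sided inverse of a square matrix is automatically two-sided properly closes the argument. The paper itself states this lemma as the well-known Sherman--Morrison--Woodbury formula and offers no proof, so your direct multiplication is the standard textbook argument and there is nothing to compare against or object to.
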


\em Proof of Theorem {\rm\ref{l2}}. Let $[Q_{k},Q_{k}^{\perp}]\in\mathbb{U}_{m}$. By Algorithm \ref{alg:qr} we know that $A$ is approximated by $Q_{k}Q_{k}^{\mathrm{H}}A$. It follows that
\begin{eqnarray}
\lambda I_{m}+AA^{\mathrm{H}}&\approx&\lambda I_{m}+Q_{k}Q_{k}^{\mathrm{H}}AA^{\mathrm{H}}Q_{k}Q_{k}^{\mathrm{H}}\nonumber\\
&=&\lambda Q_{k}Q_{k}^{\mathrm{H}}+\lambda Q_{k}^{\perp}(Q_{k}^{^\bot })^\mathrm{H}+Q_{k}BB^{\mathrm{H}}Q_{k}^{\mathrm{H}}\nonumber\\
&=&[Q_{k},Q_{k}^{\perp}]\left[
\begin{array}{cc}
\lambda I_{k}+BB^{\mathrm{H}} &O_{k\times (m-k)} \\
O_{(m-k)\times k} &\lambda I_{m-k}
\end{array}
\right]\left[
\begin{array}{c}
Q_{k}^{\mathrm{H}}  \\
(Q_{k}^{\bot})^\mathrm{H}
\end{array}
\right].\nonumber
\end{eqnarray}
Then
\begin{eqnarray}
(\lambda I_{m}+AA^{\mathrm{H}})^{-1}&\approx&[Q_{k},Q_{k}^{\perp}]\left[
\begin{array}{cc}
(\lambda I_{k}+BB^{\mathrm{H}})^{-1} &O_{k\times (m-k)} \\
O_{(m-k)\times k} &\frac{1}{\lambda} I_{m-k}
\end{array}
\right]\left[
\begin{array}{c}
Q_{k}^{\mathrm{H}}  \\
(Q_{k}^{\bot})^\mathrm{H}
\end{array}
\right]\nonumber\\
&=&Q_{k}(\lambda I_{k}+BB^{\mathrm{H}})^{-1}Q_{k}^{\mathrm{H}}+\frac{1}{\lambda}Q_{k}^{\perp}(Q_{k}^{\bot})^\mathrm{H}\nonumber\\
&=&Q_{k}(\lambda I_{k}+BB^{\mathrm{H}})^{-1}Q_{k}^{\mathrm{H}}+\frac{1}{\lambda}I_{m}-\frac{1}{\lambda}Q_{k}Q_{k}^{\mathrm{H}}\nonumber\\
&=&\frac{1}{\lambda}I_{m}+Q_{k}\Big((\lambda I_{k}+BB^{\mathrm{H}})^{-1}-\frac{1}{\lambda}I_{k}\Big)Q_{k}^{\mathrm{H}}.\nonumber
\end{eqnarray}

The inverse of $\lambda I_{n}+A^{\mathrm{H}}A$ can also be immediately provided through a similar technique. Specifically, it follows that
\[
\lambda I_{n}+A^{\mathrm{H}}A\approx\lambda I_{n}+A^{\mathrm{H}}Q_{k}Q_{k}^{\mathrm{H}}A=\lambda I_{n}+B^{\mathrm{H}}B.
\]
Then by Lemma \ref{l1} we have
\begin{eqnarray*}
(\lambda I_{n}+B^{\mathrm{H}}B)^{-1}\approx\frac{1}{\lambda}I_{n}-\frac{1}{\lambda^{2}}B^{\mathrm{H}}(I_{k}+\frac{1}{\lambda}BB^{\mathrm{H}})^{-1}B.
\end{eqnarray*}
{\hfill $\fbox{}$}

\bibliographystyle{siam} 
\bibliography{reference} 

\end{document}